\pgfplotsset{compat=1.3}
\crefname{subsection}{subsection}{subsections}
\numberwithin{equation}{section}
\def\R{\mathbb{R}}
\def\F{\mathcal{F}}
\def\E{\mathcal{E}}
\def\T{\mathcal{T}}
\def\pw{\mathrm{pw}}
\newcommand\ds{\,\text{d}s}
\newcommand\dt{\,\text{d}t}
\newcommand\dx{\,\text{d}x}
\newtheorem{theorem}{Theorem}[section]
\newtheorem{lemma}[theorem]{Lemma}
\theoremstyle{definition}
\theoremstyle{remark}
\newtheorem{remark}[theorem]{Remark}
\numberwithin{theorem}{section}
\numberwithin{equation}{section}
\numberwithin{table}{section}
\numberwithin{figure}{section}
\begin{document}
	
\title[HHO for biharmonic problem]{A hybrid high-order method for the biharmonic problem}
\author[Y. Liang, N.~T.~Tran]
      {Yizhou Liang \and Ngoc Tien Tran}
\address{Mathematical Institute, University of Oxford, Woodstock Rd, OX2 6GG Oxford, UK}

\email{yizhou.liang@maths.ox.ac.uk}

\address{Institute of Mathematics, University of Augsburg, Universit\"atsstr.~12a, 86159 Augsburg, Germany}

\email{ngoc1.tran@uni-a.de}

\thanks{The research of the first author was partly supported through a Royal Society University Research Fellowship (URF\textbackslash R1\textbackslash 221398, RF\textbackslash ERE\textbackslash 221047). The second author received funding from the European Union's Horizon 2020 research and innovation programme (project RandomMultiScales, grant agreement No.~865751)}

\keywords{hybrid high-order, biharmonic, a~priori, a~posteriori, error estimates, lower eigenvalue bound}

\subjclass[2020]{65N30, 65N25, 65N15}

\maketitle

\begin{abstract}
    This paper proposes a new hybrid high-order discretization for the biharmonic problem and the corresponding eigenvalue problem. The discrete ansatz space includes degrees of freedom in $n-2$ dimensional submanifolds (e.g., nodal values in 2D and edge values in 3D), in addition to the typical degrees of freedom in the mesh and on the hyperfaces in the HHO literature. This approach enables the characteristic commuting property of the hybrid high-order methodology in any space dimension. The main results are guaranteed lower eigenvalue bounds of higher order. Furthermore, we derive quasi-best approximation estimates as well as reliable and efficient a~posteriori error estimators under minimal regularity assumptions on the exact solution. The latter motivates an adaptive mesh-refining algorithm that empirically recovers optimal convergence rates for singular solutions.
\end{abstract}

\section{Introduction}
Let $\Omega \subset \mathbb{R}^n$, $n \in \{2,3\}$, be a polyhedral Lipschitz domain with boundary $\partial \Omega$ and unit outer normal vector $\nu_{\partial \Omega}$. 
The main focus of this paper is the approximation of the eigenpairs $(\lambda,u)$ of the biharmonic eigenvalue problem
\begin{equation}\label{def:pde}
    \begin{aligned}
        \Delta^2 u &= \lambda u \quad\text{in }\Omega,\\
        u &= 0 \quad\text{on }\partial\Omega,\\
        \partial_n u &= 0 \quad\text{on }\partial\Omega.
    \end{aligned}
\end{equation}
Here and throughout this paper, the subscripts $n$ and $t$ are associated with differential operators in the normal and tangential directions, respectively. The variational formulation of \eqref{def:pde} seeks $(\lambda,u) \in \mathbb{R}_{\geq 0} \times V$ with $V \coloneqq  H^2_0(\Omega)$ such that
\begin{align}\label{def:pde-var}
    (\nabla^2 u, \nabla^2 v)_\Omega = \lambda(u,v)_\Omega \quad\text{for any } v \in V.
\end{align}
For the source problem, there are numerous finite element methods for the approximation of \eqref{def:pde-var}. Conforming methods include the piecewise quintic Argyris element in 2D \cite{ArgyrisFriedScharpf1968} or $C^1$ conforming elements in a subtriangulation, also known as Hsieh-Clough-Tocher splits in 2D \cite{Ciarlet2002,GuzmanLischkeNeilan2022} and Worsey–Farin splits in 3D \cite{WorseyFarin1987,GuzmanLischkeNeilan2022}. Due to the complicated nature of $C^1$ conforming elements in the implementation, in particular in 3D, many nonconforming methods have been developed. On simplicial meshes, we mention the lowest-order Morley element \cite{Morley1968} as well as its higher-order generalization in 2D \cite{BlumRannacher1990}. 
Finite elements of arbitrary order on general meshes include the virtual elements (VE) of \cite{BrezziMarini2013,ZhaoChenZhang2016,AntoniettiManziniVerani2018} in 2D, the weak Galerkin (WG) methods of \cite{MuWangYe2014,ZhangZhai2015}, the hybrid high-order (HHO) methods of \cite{BonaldiDiPietroGeymonat2018,DongErn2022,DongErn2024}, and the discontinuous Galerkin (dG) methods of \cite{MozolevskiSuli2003,GeorgoulisHouston2009}.
The degrees of freedom of the aforementioned methods are different and lead to varying computational efficiency. In structured meshes, the performance of all methods is fairly comparable; cf.~\cite{DongErn2022} for a comparison between some HHO, WG, dG and the Morley finite element methods.

The focus of this paper is the HHO methodology introduced in \cite{DiPietroErnLemaire2014,DiPietroErn2015}; we refer to the monographs \cite{DiPietoDroniou2020,CicuttinErnPignet2021} for an overview of applications.
The HHO methodology is a class of hybridizable methods that can be defined for arbitrary polytopal meshes and polynomial degree -- a flexibility shared by many nonconforming methods from above. In fact, close relationship between hybridizable methods such as HDG, HHO, and WG exists, cf.~\cite{Cockburn2016,CockburnDiPietroErn2016,CicuttinErnPignet2021} for further discussions. In certain applications, hybridizable methods allow for additional benefits. We mention guaranteed error control in convex minimization problems by duality techniques, where HHO methods provide a cheap post-processing of the dual variable \cite{CarstensenTran2021,Tran2024}.

For eigenvalue problems, upper bounds can be obtained by conforming methods. However, applications, e.g., in safety analysis require guaranteed lower eigenvalue bounds (LEB), which is accessible by hybridizable methods \cite{CarstensenZhaiZhang2020,CarstensenGraessleTran2024,Tran2024x2}. Here, direct lower bounds can be obtained with a fine-tuned stabilization, i.e., the discrete eigenvalue is a lower bound of the exact one. This leads to
higher-order convergence rates and provides access to adaptive mesh-refining algorithm. Thus, the HHO eigensolver of this paper can be considered as a higher-order generalization of \cite{CarstensenPuttkammer2023}.
It turns out that these benefits are related to the design of the reconstruction operator, which satisfies the following commuting property.
Let $V_h$ denote the discrete ansatz space and $I_h : V \to V_h$ is a canonical interpolation of $V$ onto $V_h$.
The Hessian of a continuous function is approximated by the piecewise Hessian of a reconstruction operator $R_h : V_h \to P_{k+2}(\mathcal{T})$, where $k \geq 0$ is the degree of approximation and $\mathcal{T}$ is the underlying mesh, such that the $L^2$ orthogonality
\begin{align}\label{eq:commuting}
    \nabla^2_\pw (v - R_h I_h v) \perp \nabla^2_\pw P_{k+2}(\mathcal{T}) \quad\text{for any } v \in V
\end{align}
holds.
This property is available in 2D for \cite{DongErn2022} and in arbitrary space dimensions for \cite{BonaldiDiPietroGeymonat2018}, although the design of \cite{DongErn2022} appears more sophisticated as it utilizes less (non-hybridizable) degrees of freedom. To achieve \eqref{eq:commuting} in any space dimensions and derive LEB for \eqref{def:pde-var}, we utilize an ansatz similar to \cite{LiWangWangZhang2024}. The point of departure is the local integration by parts formula
\begin{align}\label{eq:iby-de}
    (\nabla^2 v, \nabla^2 p)_{T} = (v, \Delta^2 p)_{T} - (v, \partial_n \Delta p)_{\partial T} + (\partial_n v, \partial_{nn} p)_{\partial T} + (\partial_t v, \partial_{nt} p)_{\partial T}
\end{align}
for any $v \in H^2(T)$ and $p \in P_{k+2}(T)$ in a mesh element $T$
from \cite{DongErn2022}.
For the Poisson equation, lowest-order hybridizable and Crouzeix-Raviart methods are closely related.
Thus, considering the degrees of freedom of Morley finite elements motivates another integration by parts for the term $(\partial_t v, \partial_{nt} p)_{\partial T}$ along the $n-1$ dimensional hyperfaces $F$ of $T$ with outer normal vector $\nu_{\partial F}$. We arrive at
\begin{align}\label{eq:ibp}
    \begin{aligned}
        (\nabla^2 v, \nabla^2 p)_T & = (v, \Delta^2 p)_T - (v, \partial_n\Delta p)_{\partial T} + (\partial_n v, \partial_{nn} p)_{\partial T} \\ &\quad - (v,\Delta_t\partial_n p)_{\partial T} + \sum_{F\in\mathcal{F}(T)} (v,\partial_{tn} p \cdot \nu_{\partial F})_{\partial F},
    \end{aligned}
\end{align}
where $\partial F$ denotes the $n-2$ dimensional boundary of the face $F$.
(In 2D, integrals over vertices are understood as the point evaluation.)
This formula leads to a discrete ansatz space that also involves degrees of freedom to approximate the restriction of $v$ along $\partial F$ similar to \cite{LiWangWangZhang2024} and the virtual elements \cite{BrezziMarini2013,ZhaoChenZhang2016,AntoniettiManziniVerani2018}.
These additional degrees of freedom ensure that the resulting reconstruction operator $R_h$ satisfies \eqref{eq:commuting}. From a computational point of view, the degrees of freedom along the edges lead to a larger stencil compared to \cite{DongErn2022} and, thus, higher computational cost is expected, cf. \Cref{rem:computational-cost} below for further details. This is, however, justified for the task at hand.
In combination with a new stabilization, we derive the first LEB with higher convergence rates for the biharmonic problem in the current literature with the arguments of \cite{CarstensenZhaiZhang2020,Tran2024x2}.
To be precise,
we establish that
\begin{align}\label{ineq:LEB}
	\mathrm{LEB}(j) \coloneqq \min\{1,1/(\alpha +\beta\lambda_h(j))\}\lambda_h(j) \leq\lambda(j)
\end{align}
is a lower bound for the $j$-th exact eigenvalue $\lambda(j)$. Here, $\lambda_h(j)$ is the $j$-th discrete eigenvalue,
$\alpha =\sigma(1/\pi^4 + c_\mathrm{tr}/\pi^2 + c_\mathrm{tr} + c_\mathrm{tr}(2/\pi + n/\pi^2))$ with a parameter $\sigma$ chosen by the user and the constant $c_\mathrm{tr}$ from the continuous trace inequality, and $\beta =  h^4/\pi^4$. Notably, $\alpha$ and $\beta$ are explicit and independent of the polynomial degree $k$.
In particular, if $\alpha < 1$ and the mesh size $h$ is sufficiently small, then $\lambda_h(j) \leq \lambda(j)$, which provides higher-order convergence rates of the LEB.

In addition to LEB, we provide a full error analysis of the source and eigenvalue problem under minimal $H^2$ regularity assumptions of the exact solution.
For the source problem associated with \eqref{def:pde}, we prove the Cea's type estimate
\begin{align}\label{ineq:a-priori}
    \|\nabla^2_\pw(u - R_h u_h)\|_\Omega + |u_h|_{s_h} \lesssim \min_{p \in P_{k+2}(\mathcal{T})} \|\nabla^2_\pw(u - p)\|_\Omega + \mathrm{osc}(f,\mathcal{T})
\end{align}
with the data oscillation $\mathrm{osc}(f,\mathcal{T})$ of a given right-hand side $f \in L^2(\Omega)$.
This is to be contrasted with the error analysis of \cite{DongErn2022}, where $H^{2+s}$ regularity of the solution with $s > 3/2$ is required. A reduction of the smoothness assumption can be achieved by employing $C^0$ ansatz spaces for the cell variable on regular simplicial meshes, cf.~\cite{DongErn2024}.

An important aspect of \eqref{ineq:a-priori} is the efficiency $|u_h|_{s_h} \lesssim $ of the stabilization $|u_h|_{s_h}$ of $u_h$, which contributes to a reliable and efficient error control
\begin{align}\label{ineq:a-posteriori}
    \|\nabla^2_\pw(u - R_h u_h)\|_\Omega \lesssim \eta \lesssim \|\nabla^2_\pw(u - R_h u_h)\|_\Omega + \mathrm{osc}(f,\T)
\end{align}
with the error estimator $\eta$ defined in \eqref{def:eta} below. Thus, this paper provides an extension of the analysis of \cite{ErnZanotti2020,BertrandCarstensenGraessleTran2023,CarstensenTran2024} from second-order to fourth-order problems.
The error analysis of the source problem can be extended to the eigenvalue problem using the arguments of \cite{Gallistl2015,CarstensenGraessleTran2024}.

For the sake of brevity, we only consider conforming simplicial meshes but mention that an extension to general polytopal meshes is straight-forward, cf.~\Cref{rem:polytopal-mesh}.
Finally, we note that parts of our error analysis are restricted to $n \in \{2,3\}$. The reason is the lack of conforming finite element spaces in higher space dimensions, whose degrees of freedom do not depend on second or higher derivatives -- a technical difficulty that may be solved in the future.

The remaining parts of this paper are organized as follows.
We start with the introduction of the numerical scheme for the source problem with details on the reconstruction operators and stabilizations in \Cref{sec:discretization}.
\Cref{sec:error-analysis} construct a continuous right-inverse $J_h$ of the interpolation $I_h$ and establishes the error control \eqref{ineq:a-priori}--\eqref{ineq:a-posteriori} as well as $L^2$ estimates.
An extension of the error analysis of \Cref{sec:error-analysis} to the eigenvalue problem with the LEB \eqref{ineq:LEB} is carried out in \Cref{sec:eigenvalue_problem}. Some numerical examples in \Cref{sec:num_examples} conclude this paper.

Standard notation for Lebesgue and Sobolev spaces applies throughout this paper.
For any set $M$, $(\bullet, \bullet)_M$ denotes the $L^2$ scalar product and $\|\bullet\|_M$ is the $L^2$ norm in $L^2(M)$. The notation $A \lesssim B$ means $A \leq CB$ with a constant $C$ independent of the mesh size and $A \approx B$ abbreviates $A \lesssim B$ and $B \lesssim A$. 

\section{Discretization}\label{sec:discretization}

In this section, we provide details on the numerical scheme for the source problem associated with \eqref{def:pde-var}, which seeks, for a given right-hand side $f \in L^2(\Omega)$, the solution $u \in H^2_0(\Omega)$ such that
\begin{align}\label{def:source-problem}
    (\nabla^2 u, \nabla^2 v)_{\Omega} = (f,v)_{\Omega} \quad\text{for any } v \in H^2_0(\Omega).
\end{align}
The HHO method for the eigenvalue problem is presented in \Cref{sec:eigenvalue_problem}, where the stabilization is slightly modified for explicit constants in the LEB \eqref{ineq:LEB}.

\subsection{Triangulation}
A $d$-simplex, $0 \leq d \leq n$, is the convex hull of $d+1$ points with positive $d$ dimensional Lebesgue measure.
A $d$ dimensional side of a simplex $T$ is the convex combination of $d+1$ vertices of $T$.
The set of $n-1$ (resp.~$n-2$) dimensional sides of $T$, called faces (resp.~edges) of $T$, is denoted by $\mathcal{F}(T)$ (resp.~$\mathcal{E}(T)$).
Let $\mathcal{T}$ be a regular triangulation of $\Omega$ into closed nonempty simplices so that $\cup \mathcal{T} = \overline{\Omega}$ and the intersection of two distinct simplices is either empty or exactly one lower-dimensional side.
The sets $\mathcal{F} \coloneqq \cup_{T \in \mathcal{T}} \mathcal{F}(T)$ and $\mathcal{E} \coloneqq \cup_{T \in \mathcal{T}} \mathcal{E}(T)$ consist of all $n-1$ and $n-2$ dimensional sides in $\mathcal{T}$.
Interior sides are collected in $\mathcal{F}(\Omega)$ and $\mathcal{E}(\Omega)$, while $\mathcal{F}(\partial \Omega) \coloneqq \mathcal{F}\setminus\mathcal{F}(\Omega)$ and $\mathcal{E}(\partial \Omega) \coloneqq \mathcal{E}\setminus\mathcal{E}(\Omega)$ are the set of boundary sides.

For any face $F \in \mathcal{F}$, the direction of its unit outer normal vector is fixed so that $\nu_F \coloneqq \nu_{\partial \Omega}|_F$ for each $F\in\mathcal{F}(\partial \Omega)$. The neighbors of an interior face $F \in \mathcal{F}(\Omega)$ are denoted by $T_\pm$ with the convention $\nu_{\partial T_+}|_F = \nu_F$. The jump of a piecewise function $v \in W^{1,1}(\mathrm{int}(T_\pm))$ reads $[v]_F \coloneqq v|_{T_+} - v|_{T_-} \in L^1(F)$. On boundary faces, the jump of any function is defined as its trace.

Given a $d$-simplex $M \subset \R^n$ of diameter $h_M \coloneqq \mathrm{diam}(M)$, the set $P_k(M)$ consists of all restrictions of polynomials of degree at most $k$ onto $M$ and the map $\Pi_M^k : L^1(M) \to P_k(M)$ denotes the $L^2$ orthogonal projection onto $P_k(M)$. In the special case $d = 0$ (i.e., $M$ is a vertex), $\Pi_M^k$ is the point evaluation.
The piecewise version of this for the triangulation $\mathcal{T}$ reads $\Pi_\mathcal{T}^k : L^1(\Omega) \to P_k(\mathcal{T})$, where $(\Pi_\mathcal{T}^k v)|_T \coloneqq \Pi_T^k v|_T$ for any $T \in \mathcal{T}$.
We define the projections $\Pi_\mathcal{F}^k$ and $\Pi_\mathcal{E}^k$ analogously.
The mesh size function $h_\mathcal{T}$ of the triangulation $\mathcal{T}$ is defined locally by $h_\mathcal{\mathcal{T}}|_T \coloneqq h_T$ for all $T \in \T$.
Given $f \in L^2(\Omega)$, the data oscillation of $f$ of degree $\ell \geq 0$ reads $\mathrm{osc}(f,\mathcal{T}) \coloneqq \|h_\mathcal{T}^2(1 - \Pi_\mathcal{T}^\ell) f\|_\Omega$.
The shape regularity $\varrho(T)$ of a simplex $T$ is the ratio of its diameter to the radius of the largest inscribed ball. This gives rise to the shape regularity $\varrho(\mathcal{T}) \coloneqq \max_{T \in \mathcal{T}} \varrho(T)$ of the triangulation $\mathcal{T}$.

The differential operators $\nabla^2_\pw$, $\nabla_\pw$, and $\Delta^2_\pw$ denote the piecewise
version of $\nabla^2$, $\nabla$ and $\Delta$ without explicit reference to $\T$.

\subsection{Local ansatz space and reconstructions}
Given $k \geq 0$ and a simplex $T\in \mathcal{T}$, the local HHO ansatz space reads
\begin{equation*}
	V_h(T) \coloneqq P_{\ell}(T) \times P_{m}(\mathcal{F}(T))\times P_k(\mathcal{F}(T))\times P_k(\mathcal{E}(T)),
\end{equation*}
with $\ell \geq \max\{k-2,1\}$, $m=\max\{k-1,0\}$, and the canonical interpolation $I_T : W^{2,1}(\mathrm{int}(T)) \to V_h(T)$, defined by
\begin{align*}
    v \mapsto (\Pi_T^\ell v, \Pi_{\mathcal{F}(T)}^m v, \Pi_{\mathcal{F}(T)}^k \partial_n v, \Pi_{\mathcal{E}(T)}^k v) \in V_h(T).
\end{align*}
We note that the choice $\ell \coloneqq k+2$ is relevant for computational LEB in \Cref{sec:eigenvalue_problem} below, while $\ell \geq 1$ is required for the approximation of the right-hand side.
The local reconstruction operator $R_T : V_h(T) \to P_{k+2}(T)$ maps $v_h = (v_T, v_{\mathcal{F}(T)}, \beta_{\mathcal{F}(T)}, v_{\mathcal{E}(T)}) \in V_h(T)$ onto $R_T v_h \in P_{k+2}(T)$ with
\begin{equation}\label{eq:local_restruct}
	\begin{aligned}
		(\nabla^2 R_T v_h, \nabla^2 p)_T & = (v_T,\Delta^2 p)_T - (v_{\mathcal{F}(T)}, \partial_n\Delta p)_{\partial T} + (\beta_{\mathcal{F}(T)}, \partial_{nn}p)_{\partial T} \\ &\quad - (v_{\mathcal{F}(T)},\Delta_t \partial_n p)_{\partial T} + \sum_{F \in \mathcal{F(T)}} (v_{\mathcal{E}(T)}, \partial_{tn} p\cdot \nu_{\partial F})_{\partial F}
	\end{aligned}
\end{equation}
for any $p \in P_{k+2}(T)$. This uniquely defines $R_T v_h$ up to the degrees of freedom associated with $P_1(T)$, the kernel of $\nabla^2$. The latter are fixed by
\begin{align}\label{def:rec_constr}
    \int_T \nabla R_T v_h \,\mathrm{d}x = \int_{\partial T} v_{\mathcal{F}(T)} \nu_{\partial T} \,\mathrm{d}s \quad\text{and}\quad
	\int_T R_T v_h \,\mathrm{d}x =	\int_T v_T \,\mathrm{d}x.
\end{align}
By construction of $R_T$, the following property holds.
\begin{lemma}[commuting]\label{lem:commuting}
    Any $v \in H^2(T)$ satisfies the $L^2$ orthogonality $\nabla^2 (v - R_T I_T v) \perp \nabla^2 P_{k+2}(T)$. In particular,
    \begin{align}\label{ineq:RI-best-appr}
        \|\nabla^2 (v - R_T I_T v)\|_T = \min_{p \in P_{k+2}(T)} \|\nabla^2 (v - p)\|_T.
    \end{align}
\end{lemma}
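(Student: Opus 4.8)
The plan is to prove the $L^2$ orthogonality by comparing the defining relation \eqref{eq:local_restruct} of $R_T$ with the integration by parts identity \eqref{eq:ibp} term by term, and then to obtain \eqref{ineq:RI-best-appr} from it by a Pythagoras argument. First I would fix $v \in H^2(T)$ and note that, although \eqref{def:rec_constr} pins down $R_T I_T v$ only up to $P_1(T) = \ker \nabla^2$, the Hessian $\nabla^2 R_T I_T v$ is fully determined by \eqref{eq:local_restruct}; hence it suffices to work with the latter. Substituting $v_h = I_T v = (\Pi_T^\ell v, \Pi_{\mathcal{F}(T)}^m v, \Pi_{\mathcal{F}(T)}^k \partial_n v, \Pi_{\mathcal{E}(T)}^k v)$ and testing with an arbitrary $p \in P_{k+2}(T)$, the right-hand side of \eqref{eq:local_restruct} becomes
\begin{align*}
    (\Pi_T^\ell v, \Delta^2 p)_T &- (\Pi_{\mathcal{F}(T)}^m v, \partial_n \Delta p)_{\partial T} + (\Pi_{\mathcal{F}(T)}^k \partial_n v, \partial_{nn} p)_{\partial T} \\
    &\quad - (\Pi_{\mathcal{F}(T)}^m v, \Delta_t \partial_n p)_{\partial T} + \sum_{F \in \mathcal{F}(T)} (\Pi_{\mathcal{E}(T)}^k v, \partial_{tn} p \cdot \nu_{\partial F})_{\partial F}.
\end{align*}

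The crux is to discard every projection, which works because in each term the polynomial factor multiplying the projected quantity already lies in the target space of that projection. Indeed, $\Delta^2 p \in P_{\max\{k-2,0\}}(T) \subseteq P_\ell(T)$ by the choice $\ell \geq \max\{k-2,0\}$; on every face $F$, normal and tangential differentiation of $p \in P_{k+2}(T)$ followed by restriction to $F$ gives $\partial_n \Delta p|_F, \Delta_t \partial_n p|_F \in P_{\max\{k-1,0\}}(F) = P_m(F)$ and $\partial_{nn} p|_F \in P_k(F)$; and on every $(n-2)$-face $E \in \mathcal{E}(F)$ (a vertex in 2D, read via point evaluation), $\partial_{tn} p \cdot \nu_{\partial F}|_E \in P_k(E)$ because $\nu_{\partial F}$ is piecewise constant along $\partial F$. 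Applying the defining property of the $L^2$ projection in each of the five terms then turns the display into the right-hand side of \eqref{eq:ibp}, which equals $(\nabla^2 v, \nabla^2 p)_T$. Consequently $(\nabla^2 (v - R_T I_T v), \nabla^2 p)_T = 0$ for all $p \in P_{k+2}(T)$, i.e.\ $\nabla^2(v - R_T I_T v) \perp \nabla^2 P_{k+2}(T)$.

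Finally, for \eqref{ineq:RI-best-appr} I would take an arbitrary $q \in P_{k+2}(T)$, decompose $v - q = (v - R_T I_T v) + (R_T I_T v - q)$, and invoke the orthogonality (legitimate since $R_T I_T v - q \in P_{k+2}(T)$) to get $\|\nabla^2(v - q)\|_T^2 = \|\nabla^2(v - R_T I_T v)\|_T^2 + \|\nabla^2(R_T I_T v - q)\|_T^2 \geq \|\nabla^2(v - R_T I_T v)\|_T^2$, with equality for $q = R_T I_T v$; taking the infimum over $q$ yields the claim. I do not anticipate a genuine obstacle here: the only point that needs care is the degree bookkeeping in the middle step, which is precisely what the construction of $V_h(T)$ --- the choices $\ell \geq \max\{k-2,0\}$, $m = \max\{k-1,0\}$, and degree-$k$ unknowns on $\mathcal{F}(T)$ and $\mathcal{E}(T)$ --- is designed to make work.
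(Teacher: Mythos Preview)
Your proof is correct and follows exactly the approach of the paper, which simply notes that the orthogonality is immediate from \eqref{eq:ibp} and \eqref{eq:local_restruct} and that \eqref{ineq:RI-best-appr} follows. You have merely made explicit the degree bookkeeping that allows each projection to be dropped and the Pythagoras step behind the best-approximation claim; both are straightforward and exactly what the paper leaves implicit.
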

\begin{proof}
    The $L^2$ orthogonality follows immediately from \eqref{eq:ibp} and \eqref{eq:local_restruct}. This implies the best approximation property \eqref{ineq:RI-best-appr}.
\end{proof}
The local stabilization $s_T : V_h(T) \times V_h(T) \to \mathbb{R}$ of this paper reads, for any $u_h = (u_T, u_{\mathcal{F}(T)}, \alpha_{\mathcal{F}(T)}, u_{\E(T)}), v_h = (v_T, v_{\F(T)}, \beta_{\F(T)}, v_{\E(T)}) \in V_h(T)$,
\begin{align}\label{def:stab_loc}
    \begin{aligned}
        s_T(u_h,v_h) \coloneqq  &  h_T^{-4} (\Pi_T^{\ell}(u_{T} - R_T u_h), \Pi_{T}^{\ell}(v_T - R_T v_h))_{T}\\
    	&\quad + h_T^{-3} (\Pi_{\F(T)}^{m}(u_{\F(T)} - R_T u_h),\Pi_{\F(T)}^{m}(v_{\F(T)} - R_T v_h))_{\partial T}\\
        &\quad + h_T^{-1} (\Pi_{\F(T)}^k(\alpha_{\F(T)}-\partial_n R_T u_h),\Pi_{\F(T)}^k(\beta_{\F(T)} - \partial_n R_T v_h)_{\partial T}\\
    	&\quad + h_T^{-2} \sum_{E \in \E(T)} (\Pi_{E}^{k}(u_{\E(T)} -R_T u_h),\Pi_{E}^{k}(v_{\E(T)} - R_T v_h))_{E}.
    \end{aligned}
\end{align}
The stabilization is quasi-optimal in the following sense.
\begin{lemma}[optimality of $s_T$]\label{lem:best-appr-stab}
    Let a simplex $T$ be given. Any $v \in H^2(T)$ satisfies
    \begin{align*}
        \sqrt{s_T(I_T v, I_T v)} \lesssim \min_{p \in P_{k+2}(\T)} \|\nabla^2(v - p)\|_T.
    \end{align*}
\end{lemma}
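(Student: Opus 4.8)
The plan is to write $w \coloneqq R_T I_T v \in P_{k+2}(T)$ and $\mu \coloneqq \min_{p \in P_{k+2}(T)}\|\nabla^2(v - p)\|_T$; since the right-hand side of the asserted inequality involves only the $L^2(T)$-norm, this $\mu$ coincides with $\min_{p \in P_{k+2}(\mathcal{T})}\|\nabla^2_\pw(v-p)\|_T$, and by \Cref{lem:commuting} it equals $\|\nabla^2(v - w)\|_T$. It therefore suffices to bound each of the four contributions to $s_T(I_T v, I_T v)$ by $C\mu^2$ with $C$ depending only on $\varrho(\mathcal{T})$ and $k$. First I would simplify those contributions. Since the cell, face, normal-face and edge components of $I_T v$ are the $L^2$ projections $\Pi_T^\ell v$, $\Pi_{\mathcal{F}(T)}^m v$, $\Pi_{\mathcal{F}(T)}^k \partial_n v$, $\Pi_{\mathcal{E}(T)}^k v$, and each $L^2$ projection is idempotent, $\Pi_T^\ell(\Pi_T^\ell v - R_T I_T v) = \Pi_T^\ell(v - w)$, and likewise on faces and edges (on a vertex $\Pi_E^k$ is the point evaluation, so the identity is trivial). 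Using in addition the $L^2$-stability of these projections, the four terms are bounded, up to the weights $h_T^{-4},h_T^{-3},h_T^{-1},h_T^{-2}$, by $\|v-w\|_T^2$, $\|v-w\|_{\partial T}^2$, $\|\partial_n(v-w)\|_{\partial T}^2$ and $\sum_{E\in\mathcal{E}(T)}\|v-w\|_E^2$, respectively.

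The key structural input is that the constraints \eqref{def:rec_constr} pin down the $P_1(T)$-component of $w$ so that $v - w$ has vanishing integral mean and vanishing mean gradient over $T$. Indeed, $\ell \geq \max\{k-2,0\} \geq 0$ and $m = \max\{k-1,0\} \geq 0$, so the constants lie in the ranges of $\Pi_T^\ell$ and $\Pi_{\mathcal{F}(T)}^m$; hence $\int_T v_T\dx = \int_T v\dx$ and, by the divergence theorem, $\int_{\partial T} v_{\mathcal{F}(T)}\,\nu_{\partial T}\ds = \int_{\partial T} v\,\nu_{\partial T}\ds = \int_T \nabla v\dx$. Combined with \eqref{def:rec_constr} this gives $\int_T(v-w)\dx = 0$ and $\int_T\nabla(v-w)\dx = 0$. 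Two applications of the Poincaré--Wirtinger inequality, with constants controlled by $\varrho(\mathcal{T})$, then yield $\|\nabla(v-w)\|_T \lesssim h_T\,\mu$ and $\|v-w\|_T \lesssim h_T\|\nabla(v-w)\|_T \lesssim h_T^2\mu$, which settles the cell term.

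For the two face terms I would invoke the scaled multiplicative trace inequality $\|g\|_{\partial T}^2 \lesssim h_T^{-1}\|g\|_T^2 + h_T\|\nabla g\|_T^2$, applied once to $g = v-w$ and once component-wise to $\nabla(v-w)$ (noting $|\partial_n(v-w)| \le |\nabla(v-w)|$ on $\partial T$ and $\|\nabla^2(v-w)\|_T = \mu$); together with the Poincaré bounds this gives $\|v-w\|_{\partial T} \lesssim h_T^{3/2}\mu$ and $\|\partial_n(v-w)\|_{\partial T} \lesssim h_T^{1/2}\mu$, matching the weights $h_T^{-3}$ and $h_T^{-1}$. For the edge term one needs a trace inequality onto the $(n-2)$-dimensional sides $E$ of $T$: scaling to a reference simplex and using $H^2(T)\hookrightarrow H^1(E)\hookrightarrow L^2(E)$ for $n=3$ and $H^2(T)\hookrightarrow C(\overline{T})$ for $n=2$, one obtains $\|g\|_E^2 \lesssim h_T^{-2}\|g\|_T^2 + \|\nabla g\|_T^2 + h_T^2\|\nabla^2 g\|_T^2$; inserting $g = v-w$ together with the Poincaré bounds gives $\|v-w\|_E \lesssim h_T\mu$, and summing over the uniformly bounded number of edges $E\in\mathcal{E}(T)$ yields the bound $C\mu^2$ for the edge contribution. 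Collecting the four estimates proves the claim.

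The main obstacle I anticipate is the trace/embedding onto the lowest-dimensional sides $E\in\mathcal{E}(T)$: this is the only step where the full $H^2$-regularity of $v$ — equivalently, the appearance of $\nabla^2(v-w)$ rather than lower-order derivatives on the right-hand side — is essential, and it is where one must carefully distribute the two powers of $h_T^{-1}$ arising from codimension two among the three terms of the scaled trace inequality so that the resulting weight is exactly $h_T^{-2}$. Everything else — idempotency and $L^2$-stability of the projections, the divergence theorem, Poincaré--Wirtinger, and the scaled face-trace inequality — is routine and contributes only constants depending on $\varrho(\mathcal{T})$ and $k$.
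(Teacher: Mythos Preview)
Your proposal is correct and follows essentially the same route as the paper: use the constraints \eqref{def:rec_constr} to obtain vanishing cell means of $v-w$ and $\nabla(v-w)$, apply Poincar\'e--Wirtinger twice, then trace inequalities (on faces and on the codimension-$2$ edges) to control all four stabilization contributions, and finish with \Cref{lem:commuting}. The only difference is emphasis: you spell out the codimension-$2$ trace/embedding argument for the edge term explicitly, whereas the paper compresses all four bounds into a single displayed inequality invoking ``the trace inequality'' without further comment.
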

\begin{proof}
    The constraints \eqref{def:rec_constr} give rise to
    \begin{align}\label{eq:vanishing_mean}
    	\begin{aligned}
    		\int_T R_T I_T v \dx &= \int_T \Pi_T^\ell v \dx = \int_T v \dx,\\
    		\int_T \nabla R_T I_T v \dx &= \int_{\partial T} \Pi_{\F(T)}^m v \nu_{\partial T} \ds = \int_{\partial T} v \nu_{\partial T} \ds = \int_T \nabla v \dx.
    	\end{aligned}
    \end{align}
    Hence, the Poincar\'e inequality imply
    \begin{align}
        h_T^{-2} \|v - R_T I_T v\|_T + h_T^{-1} \|\nabla(v - R_T I_T v)\|_T \lesssim \|\nabla^2(v - R_T I_T v)\|_T.
    \end{align}
    From this, the best approximation property of $L^2$ projections, the trace inequality, we infer
    \begin{align*}
        &s_T(I_T v, I_T v) \leq h_T^{-4} \|v - R_T I_T v\|_T^2 + h_T^{-3}\|v - R_T I_T v\|_{\partial T}^2\\
        &\quad+ h_T^{-1} \|\nabla(v - R_T I_T v)\|_{\partial T}^2 + \sum_{E \in \E(T)} h_T^{-2} \|v - R_T I_T v\|_E^2 \lesssim \|\nabla^2(v - R_T I_T v)\|_T^2.
    \end{align*}
    This and \eqref{ineq:RI-best-appr} conclude the proof.
\end{proof}
\subsection{Discrete problem}
The discrete ansatz space of $H^2_0(\Omega)$ reads
\begin{equation*}
	V_h \coloneqq P_\ell(\mathcal{T}) \times P_{m}(\mathcal{F}(\Omega)) \times P_k(\F(\Omega)) \times P_k(\E(\Omega))
\end{equation*}
with the local restriction 
\begin{align*}
    v_h|_T \coloneqq (v_\T|_T, v_\F|_{\partial T}, ((\nu_F \cdot \nu_{\partial T}|_F) \beta_\F|_{F})_{F \in \F(T)}, (v_\E|_E)_{E \in \E(T)}) \in V_h(T)
\end{align*}
in $T \in \T$
for any $v_h = (v_\T, v_\F, \beta_\F, v_\E) \in V_h$.
Here, $P_{m}(\mathcal{F}(\Omega))$ (resp.~$P_k(\E(\Omega))$) is the set of all functions $v_\F \in P_m(\F)$ (resp.~$v_\E \in P_k(\E)$) vanishing along boundary faces $v_\F|_F = 0$ for all $F \in \F(\partial \Omega)$ (resp.~boundary edges $v_\E|_E = 0$ for all $E \in \E(\partial \Omega)$) to model homogeneous boundary conditions. The global interpolation $I_h:H_0^2(\Omega) \rightarrow V_h$ is defined by
\begin{align*}
    v \mapsto (\Pi_{\mathcal{T}}^\ell v, \Pi_{\mathcal{F}}^m v, \Pi_{\mathcal{F}}^k (\nu_{\mathcal{F}}\cdot\nabla v), \Pi_{\mathcal{E}}^k v) \in V_h.
\end{align*}
We define the reconstructions $R_h$ and the stabilization $s_h$ locally by
\begin{align}
    (R_h v_h)|_T = R_T (v_h|_T) \quad\text{in } T \in \T \quad\text{and}\quad s_h(u_h,v_h) \coloneqq \sum_{T \in \T} s_T(u_h|_T, v_h|_T)
\end{align}
for any $u_h,v_h \in V_h$. The discrete problem seeks the unique solution $u_h \in V_h$ to
\begin{align}\label{def:discrete_problem}
    a_h(u_h,v_h) = (f,v_\T)_\Omega \quad\text{for any } v_h = (v_\T, v_\F, \beta_\F, v_\E) \in V_h
\end{align}
with the bilinear form
\begin{align*}
    a_h(u_h,v_h) \coloneqq (\nabla_\pw^2 R_h u_h, \nabla_\pw^2 R_h v_h)_\Omega + s_h(u_h,v_h),
\end{align*}
which induce the seminorms $\|\bullet\|_h \coloneqq \sqrt{a_h(\bullet,\bullet)}$ and $|\bullet|_{s_h} \coloneqq \sqrt{s_h(\bullet,\bullet)}$ in $V_h$.

\begin{remark}[relation to WG method of \cite{LiWangWangZhang2024}]
    If $\ell = k + 2$, the discrete Hessian $\partial^2_w$ in \cite{LiWangWangZhang2024} is reconstructed in the space $P_k(\mathcal{T})^{n \times n}$ of piecewise polynomials of degree at most $k$ using a similar formula to \eqref{eq:local_restruct}. It can be shown that the $L^2$ projection of the discrete Hessian $\partial_w^2 v_h$ from \cite{LiWangWangZhang2024} onto $\nabla^2_\pw P_{k+2}(\mathcal{T})$ is equal to $\nabla^2_\pw R_h v_h$ for any $v_h \in V_h$. The stabilization $s_h$ in \eqref{def:stab_loc} displays a similar structure to that of \cite{LiWangWangZhang2024}, but additionally involves the reconstruction operator $R_h v_h$.
\end{remark}
\begin{remark}[static condensation]
	The volume variables associated with the mesh $\T$ can be statically condensed. 
\end{remark}
\begin{table}[ht!]
    \centering
    \begin{tabular}{|c|cccc|c|}
        \hline
        unknowns & cell & face & grad & edge & $k$\\
        \hline
        \cite{DongErn2022} (2D) & $k+2$ & $k+1$ & $k$ & -- & $\geq 0$\\
        \cite{DongErn2022} (3D) & $k+2$ & $k+2$ & $k$ & -- & $\geq 0$\\
        \hline
        present & $\geq \max\{k-2,1\}$ & $\max\{k-1,0\}$ & $k$ & $k$ & $\geq 0$\\
        \hline
    \end{tabular}
    \captionsetup{width=1\linewidth}
    \caption{Degrees of freedom of HHO methods in \cite{DongErn2022} and the present method.}
    \label{tab:dof}
\end{table}
\begin{remark}[computational cost]\label{rem:computational-cost}
    In 2d with the choice $k \geq 1$, 
    we have $(2k+1)|\mathcal{F}(\Omega)| + |\mathcal{E}(\Omega)|$ global degrees of freedom.
    For general meshes into $j$-sided polygons ($j \geq 3$) and small mesh sizes (where the number of boundary faces are neglectable), $|\mathcal{T}| \sim 2|\mathcal{F}|/j$.
    This and the Euler formula $|\mathcal{E}| + |\mathcal{T}| - 2 = |\mathcal{F}|$ lead to approximately $(2k+1)|\mathcal{F}| + (j-2)|\mathcal{F}|/j$ global degrees of freedom, whereas \cite{DongErn2022} utilizes approximately $(2k+3)|\mathcal{F}|$ degrees of freedom in 2d, cf.~\Cref{tab:dof}. Thus, our method employs $(j+2)|\mathcal{F}|/j$ less global degrees of freedom, which becomes more significant for small $k$.
    In 3d, the situation is less clear due to the unknown number of simplices sharing an edge.

    On the other hand,
    the introduction of degrees of freedom on $n-2$ dimensional sides of an element leads to a larger stencil comparing to \cite{DongErn2022}, cf.~\Cref{sec:num_ex_bihar} for a visualization of the sparsity pattern of the stiffness matrix on a uniform mesh.
    Furthermore, numerical results on the computational times for different HHO methods in \cite{DongErn2022} suggest a slightly higher computational cost when the reconstruction operator $R_h$ is involved in the stabilization.
    Overall, we expect that our method is computationally more expensive than that of \cite{DongErn2022}.

    However,
    we mention that $R_h$ is a crucial ingredient for explicit constants in the guaranteed lower eigenvalue bounds derived below. By setting the degrees of freedom associated with the kernel of the Hessian in \eqref{def:rec_constr}, we have access to the Poincar\'e inequality, leading to constants independent of the polynomial degree $k$ in \Cref{thm:LEB}.
    Another theoretical advantage is the access to quasi-optimal error estimates in \Cref{sec:error-analysis} for general polytopal meshes, cf.~\Cref{rem:polytopal-mesh}.
\end{remark}

\begin{theorem}[existence and uniqueness of solutions]\label{thm:wellposedness}
    The bilinear form $a_h$ is a scalar product in $V_h$. In particular, there exists a unique solution $u_h$ to \eqref{def:discrete_problem}.
\end{theorem}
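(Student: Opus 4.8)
The plan is to verify that $a_h$ is a symmetric, positive-definite bilinear form on the finite-dimensional space $V_h$. Granting this, existence and uniqueness of $u_h$ follow at once: the right-hand side $v_h \mapsto (f, v_\T)_\Omega$ is a linear functional on $V_h$, so the Riesz representation theorem in the inner-product space $(V_h, a_h)$ — equivalently, invertibility of the (symmetric positive-definite) Gram matrix of $a_h$ — produces a unique $u_h$ solving \eqref{def:discrete_problem}. Symmetry and bilinearity of $a_h$ are clear from its definition, and $a_h(v_h,v_h) = \|\nabla^2_\pw R_h v_h\|_\Omega^2 + |v_h|_{s_h}^2 \ge 0$, so the entire content of the theorem is the implication $a_h(v_h,v_h) = 0 \Rightarrow v_h = 0$ for $v_h = (v_\T, v_\F, \beta_\F, v_\E) \in V_h$.

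So suppose $a_h(v_h,v_h) = 0$. Then $\nabla^2 R_T(v_h|_T) = 0$ on every $T \in \T$, hence $r \coloneqq R_h v_h$ is affine on each simplex, and $s_T(v_h|_T, v_h|_T) = 0$ for all $T$. The diagonal of $s_T$ in \eqref{def:stab_loc} is a sum of four squared seminorms, each of which must vanish; using $r|_T \in P_1(T)$ and cancelling the orientation sign $\nu_F \cdot \nu_{\partial T}|_F \in \{\pm1\}$ from the local restriction of $v_h$, this gives, for all $T \in \T$, all $F \in \F(T)$, and all $E \in \E(T)$,
\begin{align*}
    & \Pi_T^\ell(v_\T - r)|_T = 0, & & \Pi_F^m(v_\F - r)|_F = 0,\\
    & \beta_\F|_F = \nabla r|_T \cdot \nu_F, & & \Pi_E^k(v_\E - r)|_E = 0.
\end{align*}

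The crucial step is to promote $r$ to a globally affine function. Fix an interior face $F = \partial T_+ \cap \partial T_-$. Every $(n-2)$-dimensional side $E \subset \partial F$ is a side of both $T_+$ and $T_-$, so the last identity above yields $\Pi_E^k(r|_{T_+}) = \Pi_E^k v_\E|_E = \Pi_E^k(r|_{T_-})$; since the linear map $q \mapsto (\Pi_E^k q)_{E \subset \partial F}$ is injective on $P_1(F)$ (for a segment $F$ it is point evaluation at the two endpoints, and for a triangle $F$ it recovers at least the three edge means, which determine an affine function on $F$), we conclude $r|_{T_+} = r|_{T_-}$ on $F$. Hence the tangential gradient of $r$ is continuous across $F$, and reading the third identity from the two sides of $F$ (with opposite face normals) gives $\nabla r|_{T_+}\cdot\nu_F = \beta_\F|_F = \nabla r|_{T_-}\cdot\nu_F$, so $\nabla r$ itself is continuous across $F$. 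A continuous, piecewise constant function on the connected domain $\Omega$ is constant, so $\nabla r \equiv b$ for some $b \in \R^n$ and $r$ is globally affine. On every boundary face the homogeneous boundary condition built into $V_h$ forces $\beta_\F|_F = 0$, hence $b\cdot\nu_{\partial\Omega} = 0$ on $\partial\Omega$; the divergence theorem then gives $|b|^2|\Omega| = \int_\Omega b\cdot\nabla(b\cdot x)\dx = \int_{\partial\Omega} (b\cdot x)(b\cdot\nu_{\partial\Omega})\ds = 0$, so $b = 0$ and $r \equiv a$ is constant; testing the second identity on a boundary face, where $v_\F|_F = 0$, yields $\Pi_F^m a = a = 0$. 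Therefore $R_h v_h = 0$, and the four identities collapse to $\Pi_T^\ell v_\T = 0$, $\Pi_F^m v_\F = 0$, $\beta_\F = 0$, $\Pi_E^k v_\E = 0$ on all simplices, faces, and edges; since $v_\T \in P_\ell(\T)$, $v_\F \in P_m(\F)$, $v_\E \in P_k(\E)$, this is precisely $v_h = 0$.

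I expect the main obstacle to be the bookkeeping in the crucial step: matching each stabilization term to the trace of $R_T v_h$ it controls, keeping track of the orientation signs $\nu_F\cdot\nu_{\partial T}|_F$, and covering the low-order case $k=0$ in 3D, where the edge degrees of freedom only reproduce edge means of $R_h v_h$ rather than its values along edges. Conceptually, however, the heart of the argument is that the $(n-2)$-dimensional (edge) degrees of freedom promote $R_h v_h$ to a continuous — and then, via the gradient stabilization, to a $C^1$ — piecewise affine function, which the clamped boundary conditions force to vanish; it is the same degrees of freedom that also underlie the commuting property of \Cref{lem:commuting}.
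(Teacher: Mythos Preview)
Your proof is correct and follows essentially the same approach as the paper's: you deduce that $R_h v_h$ is piecewise affine, use the edge stabilization to force continuity across interior faces (via the midpoint/edge-mean argument you spell out for both $n=2$ and $n=3$), use the normal-derivative stabilization to obtain $C^1$ continuity, conclude that $R_h v_h$ is globally affine, and then apply the homogeneous boundary data to force $R_h v_h = 0$ and hence $v_h = 0$. Your treatment is in fact more detailed than the paper's (explicit sign bookkeeping, the injectivity of the edge-projection map on $P_1(F)$, and the divergence-theorem argument for $b=0$), but the underlying strategy is identical.
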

\begin{proof}
    We provide only a proof for the positive definiteness of the induced norm $\|\bullet\|_h$. Suppose that $\|v_h\|_h = 0$ for some $v_h = (v_\T, v_\F, \beta_\F, v_\E) \in V_h$, then $\nabla^2_\pw R_h v_h = 0$ and so, $R_h v_h$ is a piecewise affine function. Since the stabilization vanishes,
    $\|\Pi_F^k(\beta_\F - \nabla R_h v_h \cdot \nu_F)\|_F = \|\beta_\F - \nabla R_h v_h \cdot \nu_F\|_F = 0$ for any $F \in \F$ enforces the continuity of the gradient of $R_h v_h$ in the normal direction. 
    The jump $[R_h v_h]_F$ along any interior face $F \in \F$ is an affine function on $F$ and vanishes at all $n$ midpoints of the edges of $F$ from 
    $\|\Pi_E^k(v_\E - R_h v_h)\|_E = 0$ for any $E \in \E$. Hence, $R_h v_h$ is continuous. This and the continuity of the gradient in normal directions prove that $R_h v_h$ is a (global) affine function. The boundary data imposed by $V_h$ and the vanishing stabilization conclude $R_h v_h = 0$ as well as $v_h = 0$.
\end{proof}

We introduce the Galerkin projection $G_h \coloneqq R_h \circ I_h : V \to P_{k+2}(\T)$. From \Cref{lem:commuting}, any $v \in V$ satisfies the $L^2$ orthogonality
\begin{align}\label{eq:commuting-G}
	\nabla_\pw^2(v - G_h v) \perp \nabla_\pw^2 P_{k+2}(\T)
\end{align}
and the best-approximation property
\begin{align}\label{eq:best-appr}
	\|\nabla_\pw^2(v - G_h v)\|_\Omega = \min_{p \in P_{k+2}(\T)} \|\nabla_\pw^2(v - p)\|_\Omega.
\end{align}
Recall the vanishing mean properties \eqref{eq:vanishing_mean} arising from the constraints \eqref{def:rec_constr}.
The Poincar\'e inequality with the constant $1/\pi$ in convex domains \cite{Bebendorf2003} implies
\begin{align}\label{ineq:v-Gv-Poincare}
	\|h_\T^{-2}(v - G_h v)\|_\Omega/\pi^2 \leq \|h_\T^{-1}\nabla_\pw(v - G_h v)\|_\Omega/\pi \leq \|\nabla_\pw^2(v - G_h v)\|_\Omega.
\end{align}
The frequently employed bound
\begin{align}\label{ineq:v-Piv}
	\|h_\T^{-2}(1 - \Pi_\T^{k+2}) v\|_\Omega \leq \|h_\T^{-2}(v - G_h v)\|_\Omega \leq \pi^2\|\nabla_\pw^2(v - G_h v)\|_\Omega
\end{align}
follows from the best-approximation property of the $L^2$ projection and is stated here for later reference.
\begin{remark}[alternative side conditions for $R_T$]
    In \cite{DongErn2022}, the orthogonality
    $$(R_T v_h, p)_T = (v_T,p)_T$$ for any $p \in P_1(T)$ is utilized instead of \eqref{def:rec_constr} to fix the degrees of freedom associated with $P_1(T)$ in the definition of $R_T$. A similar relation holds for the Galerkin projection (called $H^2$-elliptic projection therein). The major part of this paper is not restricted to the choice \eqref{def:rec_constr} but it is required for explicit constants in \eqref{ineq:v-Gv-Poincare}, contributing to LEB in \Cref{sec:LEB} below.
\end{remark}

\section{Error analysis of the source problem}\label{sec:error-analysis}
This section establishes the a~priori and a~posteriori error estimates \eqref{ineq:a-priori}--\eqref{ineq:a-posteriori} for the source problem.

\subsection{Right-inverse}\label{sec:right-inverse}
The error analysis of this paper relies on the construction of a right-inverse $J_h : V_h \to V$ of the interpolation $I_h$ in the spirit of \cite{VeeserZanotti2019,ErnZanotti2020} using averaging and correction techniques, cf.~\cite{Gallistl2015} for the Morley FEM and \cite{CarstensenKhotPani2023,KhotMoraRuizBaier2025} for virtual elements.

\begin{lemma}[right-inverse]\label{lem:right-inverse}
    There exists a continuous linear operator $J_h : V_h \to V$ with $I_h J_h = \mathrm{Id}$ in $V_h$.
\end{lemma}

\begin{remark}[extension to other hybridizable methods]
    The construction of this right-inverse operator can be adapted to other frameworks. For the method in \cite{LiWangWangZhang2024}, the discrete space and interpolation operator coincide with $V_h(T)$ and $I_h$ for $l=k+2$, respectively, rendering its construction straightforward. For the three-dimensional case in \cite{DongErn2022}, the construction follows by excluding the degrees of freedom on edges and setting $r =  \max\{\ell,m,k\} + 11$ in the subsequent argument. However, it is worth noting that for the two-dimensional case in \cite{DongErn2022}, there does not exist a right-inverse operator $J_h$ such that $I_h J_h= \mathrm{Id}$, where $I_h$ denotes the global reduction operator. This is due to the fact that $I_h$ in the 2D setting of \cite{DongErn2022} is not surjective onto the discrete space,  as the edge-based components of $I_h u$ for any $u\in H^2(\Omega)$ are continuous at the vertices while the discrete space permits nodal discontinuities.
\end{remark}
\begin{proof}
For the sake of brevity, we only provide details in three space dimensions because the 2D case is similar but simpler.\\[-0.5em] 

\noindent\emph{Construction.} Averaging techniques in $C^1$ finite element spaces on the Worsey-Farin split of $\T$ \cite{WorseyFarin1987,GuzmanLischkeNeilan2022} lead to a linear operator $\mathcal{A}_h : V_h \to V$ with
the typical local approximation property
\begin{align}\label{ineq:appr-averaging}
    \|h_T^{-2} \delta\|^2_T
    \lesssim \sum_{F \in \F, F \cap T \neq \emptyset} (h_F^{-3} \|[R_h v_h]_F\|^2_F + h_F^{-1} \|[\partial_nR_h v_h]_F\|^2_F)
\end{align}
for any $T \in \T$ and $\delta \coloneqq R_h v_h - \mathcal{A}_h v_h$, cf.~\cite{GallistlTian2024} for further details.
To ensure the right-inverse property, we consider a correction operator $S_h$
by setting appropriate degrees of freedom of $C^1$ conforming elements as an alternative approach to bubble function techniques.
Let $r = \max\{\ell,m,k\} + 10$.
For any $w=(w_{\mathcal{T}}, w_{\mathcal{F}}, \gamma_{\F}, w_\E)$ in the product space $L^2(\T) \times L^2(\cup\F) \times L^2(\cup\F) \times L^2(\cup \E)$, there exists a function $S_h w \in P_r(\T) \cap H^2_0(\Omega)$ with
\begin{align}\label{def:correction}
    \begin{aligned}
        &\int_E S_h w p \dt = \int_E w_\E p \dt \quad\text{for any } p \in P_{r-10}(E) \text{ along every edge } E \in \E(\Omega),\\
        &\int_F S_h w p \ds = \int_F w_\F p \ds \quad\text{for any } p\in P_{r-9}(F) \text{ on every face } F \in \F(\Omega),\\
        &\int_F \partial_n S_h w p \ds = \int_F \gamma_\F p \ds \quad\text{for any } p\in P_{r-7}(F) \text{ on every face } F \in \F(\Omega),\\
        &\int_T S_h w p \dx = \int_T w_\T p \dx \quad\text{for any } p \in P_{r-8}(T) \text{ in every } T \in \T.
    \end{aligned}
\end{align}
In fact, these weights are a subset of the degrees of freedom of the $C^1$ conforming finite element $P_r(\T) \cap H^2(\Omega)$ \cite{Zhang2009}, cf.~\Cref{appendix} for further details. By setting the remaining degrees of freedom to zero, we obtain $S_h w \in V$ and, from equivalence of norms in finite dimensional spaces and scaling arguments,
\begin{align}\label{ineq:bound-correction}
	\begin{aligned}
		&h_T^{-4}\|S_h w\|_T^2 \lesssim h_T^{-4}\|w_\T\|_T^2\\
		&\qquad + \sum_{F \in \F(T)} (h_F^{-3} \|w_\F\|_F^2 + h_F^{-1} \|\gamma_\F\|^2_F) + \sum_{E \in \E(T)} h_E^{-2} \|w_\E\|_E^2.
	\end{aligned}
\end{align}
Define $J_h : V_h \to V$ for any $v_h = (v_\T, v_\F, \beta_\F, v_\E) \in V_h$ by
\begin{align*}
    J_h v_h \coloneqq \mathcal{A}_h v_h + S_h w \in V
\end{align*}
with the choice $w = v_h - I_h \mathcal{A}_h v_h$.
By construction of $S_h$ in \eqref{def:correction}, $I_h J_h = \mathrm{Id}$ in $V_h$.\\[-0.5em]
 
\noindent\emph{Approximation property of $J_h$.}
Given a simplex $T \in \T$, \eqref{ineq:bound-correction}, the best approximation of $L^2$ projections, trace, and inverse inequalities imply
\begin{align*}
	&h_T^{-4}\|S_hw\|_T^2 \lesssim s_T(v_h,v_h) + h_T^{-4}\|\Pi_T^\ell \delta\|_T^2 + \sum_{E \in \E(T)} h_E^{-2}\|\Pi_F^k \delta|_T\|_E^2\\
    &\quad+ \sum_{F \in \F(T)} (h_F^{-3}\|\Pi_F^m \delta|_T\|_F^2 + h_F^{-1}\|\Pi_F^k\partial_n \delta|_T\|_F^2) \lesssim s_T(v_h,v_h) + h_T^{-4}\|\delta\|_T^2.
\end{align*}
This, $\|(J_h - R_h) v_h\|_T \leq \|\delta\|_T + \|S_hw\|_T$ from a triangle inequality, \eqref{ineq:appr-averaging}, and the sum over all simplices establish
\begin{align}\label{ineq:bound-J-R}
    &\|h_\T^{-2}(J_h - R_h) v_h\|_\Omega^2 \lesssim s_h(v_h,v_h)\nonumber\\
    &\qquad+ \sum_{F \in \F} (h_F^{-3} \|[R_h v_h]_F\|^2_F + h_F^{-1} \|[\partial_nR_h v_h]_F\|^2_F).    
\end{align}

\noindent\emph{Continuity of $J_h$.}
For any $F \in \F$, the triangle and Poincar\'e inequality show
\begin{align}\label{ineq:bound_jump_R}
    h_F^{-3/2}&\|[R_h v_h]_F\|_F \lesssim h_F^{-3/2}\|\Pi_F^0[R_h v_h]_F\|_F + h_F^{-1/2}\|[\partial_t R_h v_h]_F\|_F\nonumber\\
    &\lesssim h_F^{-3/2}\|\Pi_F^0[R_h v_h]_F\|_F + h_F^{-1/2}\|\Pi_F^0 [\partial_t R_h v_h]_F\|_F + h_F^{1/2}\|[\partial_{tt} R_h v_h]_F\|_F.
\end{align}
An integration by parts on the (two-dimensional) face $F$, the Cauchy, and discrete trace inequality establish, for any $p \in P_0(F)^2$, that
\begin{align*}
    \int_F \Pi_F^0 [\partial_t R_h v_h]_F p \ds &= \int_F [\partial_t R_h v_h]_F p \ds = \sum_{E \in \E(F)} \int_E [R_h v_h]_F p \cdot \nu_{\partial F}|_E \dt\\
    &\lesssim \Big(\sum_{E \in \E(F)} h_E^{-1}\|\Pi_E^k[R_h v_h]_F\|_E^2\Big)^{1/2}\|p\|_F,
\end{align*}
whence $h_F^{-1}\|\Pi_F^0 [\partial_t R_h v_h]_F\|_F^2 \lesssim \sum_{E \in \E(F)} h_E^{-2}\|\Pi_E^k [R_h v_h]_F\|_E^2$. If $F \in \F(\Omega)$ is an interior face with the neighbors $T_\pm$, then $\|\Pi_E^k [R_h v_h]_F\|_E \leq \|\Pi_E^k(R_h v_h|_{T_+} - v_\E)\|_E + \|\Pi_E^k(R_h v_h|_{T_-} - v_\E)\|_E$. If $F \in \F(\partial \Omega)$, then $E \in \E(\partial \Omega)$ and so, $\|\Pi_E^k[R_h v_h]_F\|_E = \|\Pi_E^k(R_h v_h - v_\E)\|_E$ from homogeneous boundary conditions. Thus,
\begin{align}\label{ineq:proof-cont-J-dt}
    h_F^{-1}\|\Pi_F^0 \partial_t [R_h v_h]_F\|_F^2 \lesssim \sum_{K \in \T, F \subset K} s_K(v_h,v_h).
\end{align}
It is straight-forward to verify $\Pi_F^0 \circ \Pi_F^m = \Pi_F^0$ and so, the best approximation property of $L^2$ projections
and the triangle inequality provide
\begin{align}\label{ineq:proof-cont-J-jump}
    \|\Pi_F^0[R_h v_h]_F\|_F^2 \leq \|\Pi_F^m[R_h v_h]_F\|_F^2 \lesssim \sum_{K \in \T, F \subset K} \|\Pi_F^m(R_h v_h|_K - v_\F)\|_F^2.
\end{align}
The combination of \eqref{ineq:bound_jump_R}--\eqref{ineq:proof-cont-J-jump} with a discrete trace inequality results in
\begin{align}\label{ineq:jump_R}
    h_F^{-3}\|[R_h v_h]_F\|_F^2 \lesssim \sum_{K \in \T, F \subset K} (s_K(v_h,v_h) + \|\nabla^2 R_h v_h\|_K^2)
\end{align}
We proceed as above to bound the normal gradient jump of $R_h v_h$ by
\begin{align}\label{ineq:normal_jump_R}
    h_F^{-1} \|[\partial_n R_h v_h]_F\|_F^2 &\lesssim h_F^{-1} \|\Pi_F^0[\partial_n R_h v_h]_F\|_F^2 + h_{F}\|[\partial_{tn} R_h v_h]_F\|_F^2\nonumber\\
    &\lesssim
    \sum_{K \in \T, F \subset K} (s_K(v_h,v_h) + \|\nabla^2 R_h v_h\|_K^2).
\end{align}
This, \eqref{ineq:bound-J-R}, \eqref{ineq:jump_R}, and inverse estimates conclude the continuity of $J_h$.
\end{proof}
\begin{remark}[quasi-best approximation error]
	In \Cref{lem:right-inverse}, let $v_h \coloneqq I_h v \in V_h$ for some $v \in V$. Since $[v]_F = 0$ and $\partial_n [v]_F = 0$ along any face $F \in \F$, \eqref{ineq:bound-J-R} and the trace inequality imply (recall $G_h = R_h \circ I_h$)
	\begin{align*}
		\mathrm{LHS}^2
		&\lesssim \sum_{F \in \F} (h_F^{-3} \|[G_h v]_F\|^2_F + h_F^{-1} \|\partial_n [G_h v]_F\|^2_F) + |I_h v|_{s_h}^2\\
		&\lesssim \|h_\T^{-2}(v - G_h v)\|_\Omega^2 + \|h_\T^{-1}\nabla_\pw(v - G_h v)\|_\Omega^2 + \|\nabla^2_\pw(v - G_h v)\|_\Omega^2 + |I_h v|_{s_h}^2
	\end{align*}
	with the left-hand side
	\begin{align*}
		\mathrm{LHS} \coloneqq \|h_\T^{-2}(J_h - R_h) I_h v\|_\Omega .
	\end{align*}
	This, inverse inequality, \eqref{ineq:v-Gv-Poincare}, and \Cref{lem:best-appr-stab} conclude the quasi-best approximation estimate
	\begin{equation}\label{ineq:quasi-best-J-R}
    \begin{aligned}
        \|h_\T^{-2}(J_h - R_h) I_h v\|_\Omega &+ \|h_\T^{-1}\nabla_\pw(J_h - R_h) I_h v\|_\Omega + \|\nabla_\pw^2(J_h - R_h) I_h v\|_\Omega \\ & \lesssim \|\nabla_\pw^2(v - G_h v)\|_\Omega = \min_{p \in P_{k+2}(\T)} \|\nabla_\pw^2(v - p)\|_\Omega.
    \end{aligned}
	\end{equation}
\end{remark}
\begin{remark}[quasi interpolation]
	We note that $J_h \circ I_h : V \to V$ defines an interpolation into $C^1$ conforming finite element spaces with the approximation property
	\begin{align}\label{ineq:quasi-interpolation}
		\|h_\T^{-2}(v - J_h I_h v)\|_\Omega + \|h_\T^{-1}\nabla (v - J_h I_h v)\|_\Omega + \|\nabla^2(v - J_h I_h v)\|_\Omega&\nonumber\\
		\lesssim \|\nabla_\pw^2(v - G_h v)\|_\Omega \leq \|\nabla^2 v\|_\Omega&
	\end{align}
	from \eqref{ineq:quasi-best-J-R} and a triangle inequality. 
\end{remark}

\subsection{A~priori}
Let $u \in V$ be the solution to the source problem \eqref{def:source-problem} with a given right-hand side $f \in L^2(\Omega)$.
\begin{theorem}[a~priori]\label{thm:a-priori}
    The discrete solution $u_h \in V_h$ to \eqref{def:discrete_problem} satisfies
    \begin{equation}\label{ineq:a-priori-thm}
        \|I_h u - u_h\|_{h} + |u_h|_{s_h} \lesssim \min_{p \in P_{k+2}(\mathcal{T})}\|\nabla^2_\pw(u - p)\|_{\Omega} + \mathrm{osc}(f,\mathcal{T})
    \end{equation}
\end{theorem}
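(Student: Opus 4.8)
The plan is to follow the now-standard medius-analysis strategy based on a continuous right-inverse $J_h : V_h \to V$ of $I_h$ (the ``conforming companion''), which \Cref{sec:error-analysis} promises to construct. First I would record its expected properties: $I_h J_h v_h = v_h$ for all $v_h \in V_h$, together with the approximation/stability bound $\|\nabla^2_\pw(R_h v_h - \nabla^2 J_h v_h)\| \lesssim |v_h|_{s_h}$ (and similar control of lower-order terms weighted by powers of $h_\T$). Write $e_h \coloneqq I_h u - u_h \in V_h$. Since $a_h$ is a scalar product by \Cref{thm:wellposedness}, it suffices to estimate $\|e_h\|_h^2 = a_h(I_h u - u_h, e_h) = a_h(I_h u, e_h) - (f, e_{h,\T})_\Omega$, where $e_{h,\T}$ is the cell component of $e_h$. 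The goal is to show this equals a sum of terms each bounded by $\big(\min_p \|\nabla^2_\pw(u-p)\| + \mathrm{osc}(f,\T)\big)\,\|e_h\|_h$.

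The key computation is to rewrite $a_h(I_h u, e_h)$ using the commuting property. By \Cref{lem:commuting} (globalised as \eqref{eq:commuting-G}), $\nabla^2_\pw R_h I_h u = \nabla^2_\pw G_h u$ is the piecewise-$L^2$ projection of $\nabla^2 u$ onto $\nabla^2_\pw P_{k+2}(\T)$, so $(\nabla^2_\pw R_h I_h u, \nabla^2_\pw R_h e_h)_\Omega = (\nabla^2 u, \nabla^2_\pw R_h e_h)_\Omega$. Now insert the companion: $(\nabla^2 u, \nabla^2_\pw R_h e_h)_\Omega = (\nabla^2 u, \nabla^2 J_h e_h)_\Omega + (\nabla^2 u, \nabla^2_\pw R_h e_h - \nabla^2 J_h e_h)_\Omega$. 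The first term, by the continuous weak form \eqref{def:pde-var}, equals $(f, J_h e_h)_\Omega$; the second is controlled by $\|\nabla^2 u\| \cdot \|\nabla^2_\pw R_h e_h - \nabla^2 J_h e_h\| \lesssim \|\nabla^2 u\|\,|e_h|_{s_h}$ — but this needs to be sharpened, since a crude bound by $\|\nabla^2 u\|$ is not a best-approximation quantity. The standard fix is to subtract an arbitrary $p \in P_{k+2}(\T)$: replace $\nabla^2 u$ by $\nabla^2_\pw(u - p)$ in the second term using that $\nabla^2_\pw R_h e_h - \nabla^2 J_h e_h$ is, up to the commuting orthogonality, orthogonal to $\nabla^2_\pw P_{k+2}(\T)$ (one checks $R_h e_h \in P_{k+2}(\T)$ and $\nabla^2 J_h e_h - \nabla^2_\pw\Pi^{k+2}(\text{something})$ arguments; alternatively absorb $p$ directly). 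Combining, $a_h(I_h u, e_h) - (f,e_{h,\T})_\Omega = (f, J_h e_h - e_{h,\T})_\Omega + (\nabla^2_\pw(u-p), \nabla^2_\pw R_h e_h - \nabla^2 J_h e_h)_\Omega + s_h(I_h u, e_h)$.

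It remains to bound the three resulting contributions. The stabilization term is handled by Cauchy--Schwarz, $|s_h(I_h u, e_h)| \le |I_h u|_{s_h}|e_h|_{s_h}$, and $|I_h u|_{s_h} \lesssim \min_p \|\nabla^2_\pw(u-p)\|$ by \Cref{lem:best-appr-stab} summed over $\T$. The ``Hessian'' term is $\le \min_p\|\nabla^2_\pw(u-p)\| \cdot \|\nabla^2_\pw R_h e_h - \nabla^2 J_h e_h\| \lesssim \min_p\|\nabla^2_\pw(u-p)\|\,|e_h|_{s_h}$ by the companion estimate. The data term $(f, J_h e_h - e_{h,\T})_\Omega$ is the delicate one: here I would exploit that $\Pi_\T^\ell(J_h e_h) $ relates to $e_{h,\T}$ through $I_h J_h e_h = e_h$ (so $\Pi_\T^\ell J_h e_h = e_{h,\T}$ when $\ell \le $ the cell degree), reducing it to $(f, (1-\Pi_\T^\ell)J_h e_h)_\Omega = (h_\T^2(1-\Pi_\T^\ell)f, h_\T^{-2}(1-\Pi_\T^\ell)J_h e_h)_\Omega \le \mathrm{osc}(f,\T)\,\|h_\T^{-2}(1-\Pi_\T^\ell)J_h e_h\|$, and then bounding the last factor by $\|\nabla^2_\pw R_h e_h\| + |e_h|_{s_h} \lesssim \|e_h\|_h$ via a Poincaré/companion argument as in \eqref{ineq:v-Piv}. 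Dividing through by $\|e_h\|_h$ yields the bound on $\|e_h\|_h$; the bound on $|u_h|_{s_h}$ follows from the triangle inequality $|u_h|_{s_h} \le |I_h u|_{s_h} + |e_h|_{s_h}$ and \Cref{lem:best-appr-stab}. The main obstacle I anticipate is the bookkeeping around the data term — matching the polynomial degrees $\ell, m, k$ so that $I_h J_h = \mathrm{id}$ genuinely forces $\Pi_\T^\ell J_h e_h = e_{h,\T}$, and ensuring the companion's lower-order approximation properties are strong enough to absorb everything into $|e_h|_{s_h}$ and $\|\nabla^2_\pw R_h e_h\|$ — all of which hinges on the precise construction of $J_h$ that is yet to come.
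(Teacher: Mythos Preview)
Your strategy is essentially the paper's: test the error equation against $e_h = I_h u - u_h$, insert the companion $J_h e_h$, use the commuting property to replace $\nabla^2 u$ by $\nabla^2_\pw(u-G_hu)$ in the Hessian term, and handle the data term via the orthogonality of $J_h e_h - e_\T$. Two small points to correct. First, the expected bound $\|\nabla^2_\pw(R_h v_h - J_h v_h)\|\lesssim |v_h|_{s_h}$ is too strong for general $v_h$: \Cref{lem:right-inverse} only yields $\lesssim \|v_h\|_h$ (the jump terms in \eqref{ineq:bound-J-R} are controlled by the full $a_h$-norm, not by the stabilization alone). This is harmless for your argument, since $\|e_h\|_h$ on the right is what you divide by anyway. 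Second, for the data term you need more than $J_h e_h - e_\T\perp P_\ell(\T)$ when $\ell=0$: the paper builds $J_h$ so that $J_h e_h - e_\T\perp P_{\max\{k+2,\ell\}}(\T)$, which lets you write $(f,J_h e_h - e_\T)=((1-\Pi_\T^{\max\{k+2,\ell\}})f,(1-\Pi_\T^1)J_h e_h)$ and apply Poincar\'e twice; with $\perp P_0$ alone you would only control $h_\T^{-1}$-weighted terms. You anticipated this bookkeeping issue, and the construction in \Cref{lem:right-inverse} resolves it.

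One stylistic difference: the paper avoids the final triangle-inequality step for $|u_h|_{s_h}$ by using the polarization identity $s_h(u_h,e_h)=\tfrac12(|I_hu|_{s_h}^2-|e_h|_{s_h}^2-|u_h|_{s_h}^2)$, which puts $\tfrac12|u_h|_{s_h}^2$ directly on the left-hand side of \eqref{ineq:proof_a_priori}. Your Cauchy--Schwarz route works just as well.
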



\begin{proof}
    Abbreviate $e_h = I_h u - u_h \in V_h$. Then
    \begin{align*}
         \|\nabla_\pw^2 R_h e_h\|_\Omega^2 &= (\nabla^2 u,\nabla^2_\pw (R_h e_h - J_h e_h))_\Omega\\
         &\quad+ (\nabla^2 u,\nabla^2 J_h e_h)_\Omega - (\nabla^2_\pw R_h u_h,\nabla^2_\pw R_h e_h)_\Omega.
    \end{align*}
    The variational formulations on the continuous and discrete level imply
    \begin{align*}
        (\nabla^2 u,\nabla^2 J_h e_h)_\Omega - (\nabla^2_\pw R_h u_h,\nabla^2_\pw R_h e_h)_\Omega = (f, J_h e_h - e_\T)_\Omega + s_h(u_h,e_h),
    \end{align*}
    where $e_\T$ is the first component of $e_h$ associated with cell degrees of freedom.
    A straightforward calculation provides the identity
    \begin{align*}
        s_h(u_h,e_h) = \frac{1}{2}(|I_h u|_{s_h}^2 - |e_h|^2_{s_h} - |u_h|^2_{s_h}).
    \end{align*}
    The combination of the three previously displayed formula leads to
    \begin{align}\label{ineq:proof_a_priori}
        &\|\nabla_\pw^2 R_h e_h\|_\Omega^2 + \frac{1}{2}(|e_h|^2_{s_h} + |u_h|^2_{s_h}) \nonumber\\
        &\qquad = (\nabla^2 u,\nabla^2_\mathrm{pw} (R_h e_h - J_h e_h))_\Omega + \frac{1}{2}|I_h u|_{s_h}^2 + (f, J_h e_h - e_\T)_\Omega.
    \end{align}
    The data oscillation arises from the $L^2$ orthogonality $J_h e_h - e_\T \perp P_{\ell}(\T)$ in \Cref{lem:right-inverse}, the Cauchy, and Poincar\'e inequality in
    \begin{align}\label{ineq:osc}
        \begin{aligned}
        	(f, J_h e_h - e_\T)_\Omega
        	&= ((1-\Pi^{\ell}_\T)f,(1 - \Pi_\T^1)J_h e_h)_\Omega\\
        	&\lesssim \operatorname{osc}(f,\mathcal{T})\|\nabla^2 J_h e_h\|_\Omega.
        \end{aligned}
    \end{align}
    The Galerkin projection $G_h = R_h \circ I_h$ satisfies
    $G_h J_h e_h = R_h e_h$ from the right-inverse property of $J_h$. Hence, the $L^2$ orthogonality
    \begin{align}\label{eq:orth-R-J}
    	\nabla^2_\mathrm{pw} (R_h e_h - J_h e_h) \perp \nabla^2_\mathrm{pw} P_{k+2}(\mathcal{T})
    \end{align}
    follows from \Cref{lem:commuting}. This and the continuity of $J_h$ in \Cref{lem:right-inverse} show
    \begin{align}\label{ineq:proof_a_priori_stab}
        (\nabla^2 u,\nabla^2_\mathrm{pw} (R_h e_h - J_h e_h))_\Omega &= 
        - (\nabla^2_\pw (u - G_h u),\nabla^2 J_h e_h)_\Omega\nonumber\\ &\lesssim \|\nabla_\pw^2 (u - G_h u)\|_\Omega \|e_h\|_{h}.
    \end{align}
    The combination of \eqref{ineq:proof_a_priori}--\eqref{ineq:osc} with \eqref{ineq:proof_a_priori_stab} and the quasi-best approximation of the stabilization in \Cref{lem:best-appr-stab} concludes the proof.
\end{proof}

To derive $L^2$ error estimates by duality techniques, we assume the existence of an index $2 \geq s > 0$ of elliptic regularity in polyhedral domains, that is any solution $z \in V$ to
$\Delta^2 z = g$ in $\Omega$
for some $g \in L^2(\Omega)$ satisfies $z \in H^{2+s}(\Omega)$ with
\begin{align}\label{ineq:elliptic-reg}
	\|z\|_{H^{2+s}(\Omega)} \lesssim \|g\|_\Omega.
\end{align}
In 2d, it is known from \cite{BlumRannacher1980,MelzerRannacher1980} that $s>1/2$. An explicit computation of $s$ in 3d is rather involved, we refer to \cite{Dauge1988} for an abstract theory.
\begin{theorem}[$L^2$ error]\label{thm:L2}
	If $\ell \geq 2$, the discrete solution $u_h \in V_h$ to \eqref{def:discrete_problem} and $t \coloneqq \min\{s,\ell-1\}$ satisfy
	\begin{align*}
		\|\Pi_\T^\ell(u - u_\T)\|_\Omega \lesssim h^{t} (\min_{p \in P_{k+2}(\mathcal{T})}\|\nabla^2_\pw(u - p)\|_{\Omega} + \mathrm{osc}(f,\T)).
	\end{align*}
\end{theorem}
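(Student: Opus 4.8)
The plan is a duality (Aubin--Nitsche) argument in the hybrid variables. Write $g \coloneqq \Pi_\T^\ell(u - u_\T) \in P_\ell(\T)$; this is exactly the volume component $e_\T$ of $e_h \coloneqq I_h u - u_h \in V_h$ (because $\Pi_\T^\ell u_\T = u_\T$), and the goal is to estimate $\|g\|_\Omega$. Let $z \in V$ solve $\Delta^2 z = g$ in $\Omega$. By elliptic regularity \eqref{ineq:elliptic-reg}, $z \in H^{2+s}(\Omega)$ with $\|z\|_{H^{2+s}(\Omega)} \lesssim \|g\|_\Omega$, and hence $\min_{p \in P_{k+2}(\T)}\|\nabla^2_\pw(z - p)\|_\Omega \lesssim h^{s}\|g\|_\Omega$ and $\|h_\T^{-2}(z - G_h z)\|_\Omega \lesssim h^{s}\|g\|_\Omega$ by \eqref{eq:best-appr}, \eqref{ineq:v-Gv-Poincare} and \eqref{ineq:v-Piv}. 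The decisive structural point is that the datum $g$ is a $\T$-piecewise polynomial of degree $\ell$, so $\mathrm{osc}(g,\T) = 0$; therefore \Cref{thm:a-priori}, applied to the dual problem, provides its discrete solution $z_h \in V_h$, defined by $a_h(v_h, z_h) = (g, v_\T)_\Omega$ for all $v_h \in V_h$, together with $\|I_h z - z_h\|_h + |z_h|_{s_h} \lesssim h^{s}\|g\|_\Omega$.

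Testing the discrete dual problem with $e_h$ yields the key identity $\|g\|_\Omega^2 = (g, e_\T)_\Omega = a_h(e_h, z_h)$. I would then split $a_h(e_h, z_h) = a_h(e_h, I_h z) + a_h(e_h, z_h - I_h z)$; the second summand is $\le \|e_h\|_h\|z_h - I_h z\|_h \lesssim h^{s}\|g\|_\Omega\,\mathcal{A}$ with $\mathcal{A} \coloneqq \min_{p \in P_{k+2}(\T)}\|\nabla^2_\pw(u - p)\|_\Omega + \mathrm{osc}(f,\T)$, since \Cref{thm:a-priori} for the primal problem gives $\|e_h\|_h \lesssim \mathcal{A}$. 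For the first summand I would substitute the primal discrete equation $a_h(u_h, I_h z) = (f, \Pi_\T^\ell z)_\Omega$ to get $a_h(e_h, I_h z) = a_h(I_h u, I_h z) - (f, \Pi_\T^\ell z)_\Omega$, and then use the commuting property \Cref{lem:commuting} in the form \eqref{eq:commuting-G} exactly as in a consistency analysis: with $a_h(I_h u, I_h z) = (\nabla^2_\pw G_h u, \nabla^2_\pw G_h z)_\Omega + s_h(I_h u, I_h z)$, $\nabla^2_\pw G_h z \in \nabla^2_\pw P_{k+2}(\T)$, and $(\nabla^2 u, \nabla^2 z)_\Omega = (f, z)_\Omega$, the principal term collapses and leaves
\begin{align*}
  a_h(I_h u, I_h z) - (f, z)_\Omega = (\nabla^2 u, \nabla^2_\pw G_h z - \nabla^2 z)_\Omega + s_h(I_h u, I_h z).
\end{align*}
By \eqref{eq:commuting-G} the first term equals $(\nabla^2 u - q, \nabla^2_\pw(G_h z - z))_\Omega$ for any $q \in \nabla^2_\pw P_{k+2}(\T)$, hence it is $\lesssim \min_{p}\|\nabla^2_\pw(u - p)\|_\Omega\|\nabla^2_\pw(z - G_h z)\|_\Omega \lesssim h^{s}\|g\|_\Omega\,\mathcal{A}$; the Cauchy--Schwarz inequality and \Cref{lem:best-appr-stab} give $s_h(I_h u, I_h z) \le |I_h u|_{s_h}|I_h z|_{s_h} \lesssim h^{s}\|g\|_\Omega\,\mathcal{A}$. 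Collecting everything, $\|g\|_\Omega^2 = (f, z - \Pi_\T^\ell z)_\Omega + O(h^{s}\|g\|_\Omega\,\mathcal{A})$, and the theorem follows once $(f, z - \Pi_\T^\ell z)_\Omega$ is estimated and we divide by $\|g\|_\Omega$.

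The main obstacle is precisely this last term $(f, z - \Pi_\T^\ell z)_\Omega$: the naive bound $\le \mathrm{osc}(f,\T)\|h_\T^{-2}(z - \Pi_\T^\ell z)\|_\Omega$ is too lossy when $\ell$ is small, since $\|h_\T^{-2}(z - \Pi_\T^\ell z)\|_\Omega$ need not decay like $h^{s}$. The way around it is to split off the $\T$-piecewise polynomial part of $z$ of degree $k+2$: the complementary part pairs with $f$ into at most $\mathrm{osc}_{k+2}(f,\T)\|h_\T^{-2}(z - G_h z)\|_\Omega \le \mathrm{osc}(f,\T)\,h^{s}\|g\|_\Omega$, using the superapproximation bound \eqref{ineq:v-Piv} (a consequence of the commuting property) and $\mathrm{osc}_{k+2}(f,\T) \le \mathrm{osc}(f,\T)$, while the degree-$(k+2)$ part is absorbed by replacing $u_\T$ with the conforming companion $J_h u_h \in V$ (via $I_h J_h = \mathrm{id}$ and $\Pi_\T^\ell J_h u_h = u_\T$), so that the integral against $f$ turns into $(\nabla^2 u, \nabla^2 J_h u_h)_\Omega$ and can be re-absorbed. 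A subsidiary technical nuisance is the fractional regularity $z \in H^{2+s}(\Omega)$ with $s > 1/2$, which pushes all approximation estimates for $z$ into broken fractional Sobolev spaces; and, as already noted in the introduction, the construction of $J_h$ — and hence this $L^2$ analysis — is presently restricted to $n \in \{2,3\}$ by the lack of suitable $C^1$ conforming finite elements in higher dimensions.
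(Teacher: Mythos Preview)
Your overall duality organisation via the discrete dual $z_h$ is sound and is a genuinely different route from the paper. The paper never introduces $z_h$; instead it uses the companion on the primal error to write $\|e_\T\|_\Omega^2 = (e_\T, J_h e_h)_\Omega = (\nabla^2 z, \nabla^2 J_h e_h)_\Omega$ and then splits with $R_h e_h$, arriving (after some algebra) at the identity $(\nabla^2 z,\nabla^2_\pw R_h e_h)_\Omega = (f, J_h I_h z - \Pi_\T^\ell z)_\Omega + s_h(I_h z,u_h) + (\nabla^2_\pw(G_h z - J_h I_h z),\nabla^2 u)_\Omega$. Your argument via $a_h(e_h,z_h)=a_h(e_h,I_h z)+a_h(e_h,z_h-I_h z)$ reaches the same position more transparently and with the bonus that the a~priori result is reused verbatim for the dual (with vanishing oscillation). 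Up to and including the line $\|g\|_\Omega^2=(f,z-\Pi_\T^\ell z)_\Omega+O(h^s\|g\|_\Omega\,\mathcal A)$ everything is correct.

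The gap is in the treatment of $(f,z-\Pi_\T^\ell z)_\Omega$. Splitting off $z-\Pi_\T^{k+2}z$ indeed yields $\mathrm{osc}_{k+2}(f,\T)\,h^s\|g\|_\Omega$, but the remaining ``degree-$(k+2)$ part'' $(f,\Pi_\T^{k+2}z-\Pi_\T^\ell z)_\Omega$ carries no extra $h^s$: it pairs $f$ with a piecewise polynomial whose $L^2$ norm is only $\lesssim\|z\|_\Omega$, and your suggestion to ``replace $u_\T$ with $J_h u_h$'' does not produce a bound for this term --- testing the continuous equation with $J_h u_h$ gives information about $(f,J_h u_h)_\Omega$, not about an integral of $f$ against a projection of $z$. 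The companion has to be applied to the \emph{dual} variable. Replace the intermediate object $\Pi_\T^{k+2}z$ by $J_h I_h z$: the orthogonality $J_h I_h z-\Pi_\T^\ell z\perp P_{\max\{k+2,\ell\}}(\T)$ from \Cref{lem:right-inverse} gives
\[
(f,J_h I_h z-\Pi_\T^\ell z)_\Omega=((1-\Pi_\T^{\max\{k+2,\ell\}})f,\,J_h I_h z-G_h z)_\Omega\lesssim \mathrm{osc}(f,\T)\,\|\nabla^2_\pw(J_h I_h z-G_h z)\|_\Omega,
\]
and \eqref{ineq:quasi-best-J-R} bounds the last factor by $\|\nabla^2_\pw(z-G_h z)\|_\Omega\lesssim h^s\|g\|_\Omega$. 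The leftover $(f,z-J_h I_h z)_\Omega=(\nabla^2 u,\nabla^2(z-J_h I_h z))_\Omega$ then inherits $h^s$ from $\nabla^2_\pw(z-J_h I_h z)\perp\nabla^2_\pw P_{k+2}(\T)$ together with \eqref{ineq:quasi-interpolation}. (A minor point: your inequality $\mathrm{osc}_{k+2}(f,\T)\le\mathrm{osc}(f,\T)$ needs $\ell\le k+2$; for general $\ell$ use degree $\max\{k+2,\ell\}$ as above.)
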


\begin{proof}
    We adopt the abbreviation $e_h = (e_\T, e_\F, \gamma_\F, e_\E) = I_h u - u_h \in V_h$ from above. Let $z\in V$ solve $\Delta^2 z = e_{\mathcal{T}}$ in $\Omega$, i.e.,
    \begin{equation*}
    	(\nabla^2 z,\nabla^2v)_{\Omega} = (e_\T, v)_{\Omega} \quad \text{for any } v \in V.
    \end{equation*}
    Since $\Pi_\T^\ell J_h e_h = e_\T$ from $I_h \circ J_h = \mathrm{Id}$ in $V_h$, this implies the split
    \begin{align}\label{eq:proof-L2-split}
    	\|e_\T\|_\Omega^2 &= (e_\T, J_h e_h)_{\Omega} = (\nabla^2 z,\nabla^2 J_h e_h)_{\Omega}\nonumber\\
    	& = (\nabla^2 z,\nabla^2_\pw R_h e_h)_{\Omega} + (\nabla^2 z, \nabla^2_\pw (J_h e_h - R_h e_h))_{\Omega}.
    \end{align}
    The $L^2$ orthogonality \eqref{eq:orth-R-J}, a Cauchy inequality, and the continuity of $J_h$ prove
    \begin{align}\label{ineq:proof-L2-1}
    	(\nabla^2 z, \nabla^2_\pw (J_h e_h - R_h e_h))_{\Omega} & = (\nabla^2_\pw (z- G_h z),\nabla^2_\pw J_h e_h)_{\Omega}\nonumber\\
    	&\lesssim \|\nabla^2_\pw (z - G_h z)\|_\Omega \|e_h\|_h.
    \end{align}
    Linear algebra shows the split
    \begin{align*}
    	(\nabla^2 z,\nabla^2_\pw &R_h e_h)_{\Omega} = (\nabla^2 z, \nabla^2_\pw G_h u)_\Omega - (\nabla^2_\pw G_h z, \nabla^2_\pw R_h u_h)_\Omega\\
    	&= (\nabla^2 z, \nabla^2 u)_\Omega - (\nabla^2_\pw G_h z, \nabla^2_\pw R_h u_h)_\Omega - (\nabla^2 z, \nabla^2_\pw(u - G_h u))_\Omega.
    \end{align*}
    The first term on the right-hand side is equal to
    \begin{align*}
    	 (\nabla^2_\pw(z - G_h z), \nabla^2 u)_\Omega + (\nabla_\pw^2(G_h z - J_h I_h z), \nabla^2 u)_\Omega + (\nabla^2 J_h I_h z, \nabla^2 u)_\Omega.
    \end{align*}
    This, the previously displayed formula, the variational formulations \eqref{def:source-problem} and \eqref{def:discrete_problem}, and $(\nabla^2 z, \nabla^2_\pw(u - G_h u))_\Omega = (\nabla^2_\pw(z - G_h z), \nabla^2 u)_\Omega$ from \eqref{eq:commuting-G} provide
    \begin{align}\label{eq:proof-L2-identity}
    	(\nabla^2 z,\nabla^2_\pw R_h e_h)_{\Omega} &= (f, J_h I_h z - \Pi_\T^\ell z)_\Omega\nonumber\\
    	&\qquad + s(I_h z, u_h) + (\nabla_\pw^2(G_h z - J_h I_h z), \nabla^2 u)_\Omega.
    \end{align}
    The $L^2$ orthogonality $J_h I_h z - \Pi_\T^\ell z \perp P_{\ell}(\T)$ from $I_h \circ J_h = \mathrm{Id}$ in $V_h$ and \eqref{ineq:quasi-interpolation} imply
    \begin{align*}
    	(f, J_h I_h z - \Pi_\T^\ell z)_\Omega &= ((1 - \Pi_\T^\ell)f, J_h I_h z-z)_\Omega + ((1 - \Pi_\T^\ell)f, (1 - \Pi_\T^\ell) z)_\Omega\\
    	&\lesssim \mathrm{osc}(f,\T)(\|\nabla_\pw^2(z - G_h z)\|_\Omega + \|h_\T^{-2}(1-\Pi_\T^\ell)z\|_\Omega).
    \end{align*}
    Hence, \eqref{eq:proof-L2-identity}, the $L^2$ orthogonality $\nabla_\pw^2(G_h z - J_h I_h z) \perp \nabla_\pw^2 P_{k+2}(\T)$ from \Cref{lem:commuting}, the Cauchy inequality, \Cref{lem:best-appr-stab}, and \eqref{ineq:quasi-best-J-R} show
    \begin{align*}
    	(\nabla^2 z,\nabla^2_\pw R_h e_h)_{\Omega} \lesssim (\mathrm{osc}(f,\T) + |u_h|_{s_h} + \|\nabla_\pw^2(u - G_h u)\|_\Omega)&\\
        \times (\|\nabla_\pw^2(z - G_h z)\|_\Omega + \|h_\T^{-2}(1-\Pi_\T^\ell)z\|_\Omega)&.
    \end{align*}
    The combination of this with \eqref{eq:proof-L2-split}--\eqref{ineq:proof-L2-1} results in
    \begin{align*}
    	\|e_\T\|^2_\Omega \lesssim (\mathrm{osc}(f,\T) + \|e_h\|_h + |u_h|_{s_h} + \|\nabla_\pw^2(u - G_h u)\|_\Omega)&\\
        \times (\|\nabla_\pw^2(z - G_h z)\|_\Omega + \|h_\T^{-2}(1-\Pi_\T^\ell)z\|_\Omega)&.
    \end{align*}
    The assertion then follows from \Cref{thm:a-priori} and the elliptic regularity \eqref{ineq:elliptic-reg}.
\end{proof}

\subsection{A~posteriori}
In the following, we derive \eqref{ineq:a-posteriori} with the a~posteriori error estimator
\begin{align}\label{def:eta}
	\eta^2 \coloneqq \mu^2 + |u_h|^2_{s_h} + \mathrm{osc}(f,\mathcal{T})^2
\end{align}
with the jump contributions
\begin{align}\label{def:mu}
	\mu^2 \coloneqq \sum_{F \in \F} (h_F^{-3} \|[R_h u_h]_F\|^2_F + h_F^{-1} \|[\partial_n R_h u_h]_F\|_F^2).
\end{align}
\begin{theorem}[a~posteriori]\label{thm:a-posteriori}
    The discrete solution $u_h \in V_h$ to \eqref{def:discrete_problem} satisfies \eqref{ineq:a-posteriori}.
\end{theorem}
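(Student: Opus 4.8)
The plan is to establish the two estimates in \eqref{ineq:a-posteriori} separately, adapting the conforming companion and bubble-function techniques of \cite{Gallistl2015,CarstensenGraessleTran2024}. For reliability, I would pick a conforming companion $v^c \in V$ of the piecewise polynomial but nonconforming function $R_h u_h$ with the nonconformity bound $\|\nabla^2_\pw(R_h u_h - v^c)\|_\Omega \lesssim \mu$ (and, by Poincar\'e, also $\|h_\T^{-2}(R_h u_h - v^c)\|_\Omega + \|h_\T^{-1}\nabla_\pw(R_h u_h - v^c)\|_\Omega \lesssim \mu$), where $\mu$ is the jump seminorm of \eqref{def:mu}; such a companion is standard for piecewise polynomials. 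Setting $w \coloneqq u - v^c \in V$ and using $\|\nabla^2_\pw(u - R_h u_h)\|_\Omega^2 = (\nabla^2_\pw(u - R_h u_h), \nabla^2 w)_\Omega + (\nabla^2_\pw(u - R_h u_h), \nabla^2_\pw(v^c - R_h u_h))_\Omega$, the second term is $\le \mu\,\|\nabla^2_\pw(u - R_h u_h)\|_\Omega$, while the weak formulation \eqref{def:pde-var} rewrites the first term as the residual $\operatorname{Res}(w) \coloneqq (f,w)_\Omega - (\nabla^2_\pw R_h u_h, \nabla^2 w)_\Omega$. Because $\|\nabla^2 w\|_\Omega \le \|\nabla^2_\pw(u - R_h u_h)\|_\Omega + \mu$, it suffices to bound $\operatorname{Res}(w)$ by $(\|\nabla^2_\pw(u - R_h u_h)\|_\Omega + \mu + |u_h|_{s_h} + \operatorname{osc}(f,\T))$ times the relevant contributions of $\eta$, and then to absorb terms by Young's inequality.

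For $\operatorname{Res}(w)$ in the case $\ell \ge 1$, I would test the discrete problem \eqref{def:discrete_problem} with $I_h w$ and invoke the commuting property \eqref{eq:commuting-G}: since $R_h u_h \in P_{k+2}(\T)$, this gives $(\nabla^2_\pw R_h u_h, \nabla^2 w)_\Omega = (\nabla^2_\pw R_h u_h, \nabla^2_\pw G_h w)_\Omega = (f, \Pi_\T^\ell w)_\Omega - s_h(u_h, I_h w)$, hence $\operatorname{Res}(w) = (f, (1 - \Pi_\T^\ell) w)_\Omega + s_h(u_h, I_h w)$. Orthogonality of $(1 - \Pi_\T^\ell) f$ to $P_\ell(\T)$ gives $(f, (1 - \Pi_\T^\ell)w)_\Omega = ((1 - \Pi_\T^\ell)f, (1 - \Pi_\T^\ell)w)_\Omega \le \operatorname{osc}(f,\T)\,\|h_\T^{-2}(1 - \Pi_\T^\ell)w\|_\Omega$, and here the hypothesis $\ell \ge 1$ is essential: it yields $\|h_\T^{-2}(1 - \Pi_\T^\ell)w\|_\Omega \lesssim \|\nabla^2 w\|_\Omega$. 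Moreover $|s_h(u_h, I_h w)| \le |u_h|_{s_h}\,|I_h w|_{s_h} \lesssim |u_h|_{s_h}\|\nabla^2 w\|_\Omega$ by \Cref{lem:best-appr-stab}. Collecting these bounds and absorbing proves $\|\nabla^2_\pw(u - R_h u_h)\|_\Omega \lesssim \mu + |u_h|_{s_h} + \operatorname{osc}(f,\T) \lesssim \eta$ when $\ell \ge 1$.

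For $\operatorname{Res}(w)$ in the case $\ell = 0$ the bound $\|h_\T^{-2}(1 - \Pi_\T^0)w\|_\Omega \lesssim \|\nabla^2 w\|_\Omega$ fails, so I would instead integrate by parts elementwise via \eqref{eq:ibp} with $v = w \in H^2_0(\Omega)$; summing over $\T$, the jumps of $w$ and $\partial_n w$ cancel across interior faces and vanish on $\partial\Omega$, so $\operatorname{Res}(w)$ equals $(f - \Delta^2_\pw R_h u_h, w)_\Omega$ plus face integrals of $[\partial_n \Delta_\pw R_h u_h]_F$, $[\partial_{nn} R_h u_h]_F$ and $[\Delta_t \partial_n R_h u_h]_F$ tested against $w$ or $\partial_n w$, plus edge integrals of $\partial_{tn} R_h u_h \cdot \nu_{\partial F}$ tested against $w$. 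Testing \eqref{def:discrete_problem} with $I_h$ of a conforming quasi-interpolant $I_C w$ of $w$ removes the low-order part of these terms at the cost of $|u_h|_{s_h}$; the remaining part is estimated against $w - I_C w$ (local estimate $\|w - I_C w\|_T \lesssim h_T^2 \|\nabla^2 w\|_{\omega_T}$ on the element patch $\omega_T$) with trace inequalities, which reproduces the element residual $\|h_\T^2(f - \Delta^2_\pw R_h u_h)\|_\Omega$ and the weighted jumps $h_F^{1/2}\|[\partial_{nn} R_h u_h]_F\|_F$, $h_F^{3/2}\|[\partial_n \Delta_\pw R_h u_h]_F\|_F$ present in \eqref{def:eta}; finally, $[\Delta_t \partial_n R_h u_h]_F = \Delta_t [\partial_n R_h u_h]_F$ and the edge contributions are tangential derivatives of the face jumps $[\partial_n R_h u_h]_F$, resp.\ $[R_h u_h]_F$, hence are absorbed into $\mu$ by inverse estimates on faces and edges. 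This yields $\|\nabla^2_\pw(u - R_h u_h)\|_\Omega \lesssim \eta$ in the case $\ell = 0$ as well.

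Efficiency is the routine direction. The oscillation $\operatorname{osc}(f,\T)$ already appears on the right-hand side of \eqref{ineq:a-posteriori}; \Cref{thm:a-priori} together with $R_h u_h \in P_{k+2}(\T)$ (so that $\min_{p \in P_{k+2}(\T)} \|\nabla^2_\pw(u - p)\|_\Omega \le \|\nabla^2_\pw(u - R_h u_h)\|_\Omega$) bounds $|u_h|_{s_h}$ by $\|\nabla^2_\pw(u - R_h u_h)\|_\Omega + \operatorname{osc}(f,\T)$; and the jump seminorm $\mu$, the element residual, and the higher-order jump terms of \eqref{def:eta} are controlled by the standard Verf\"urth face- and element-bubble arguments, using $[u]_F = [\partial_n u]_F = 0$, $\Delta^2 u = f$ with the elliptic regularity $u \in H^{2+s}(\Omega)$, and the Poincar\'e-type bound behind \eqref{ineq:v-Gv-Poincare}. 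I expect the main obstacle to be the reliability proof in the case $\ell = 0$: showing that the tangential-derivative face terms and, above all, the edge residuals produced by \eqref{eq:ibp} are genuinely dominated by the computable seminorm $\mu$ of \eqref{def:mu} -- the step that exploits the extra $n-2$ dimensional degrees of freedom of the scheme -- while the construction of the conforming companion with the stated nonconformity bound is the other, more standard, technical ingredient.
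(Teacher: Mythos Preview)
Your overall strategy is sound and mirrors the paper, but there are three places where your proposal diverges from what actually makes the argument go through.

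\textbf{Reliability for $\ell=0$.} You integrate by parts with \eqref{eq:ibp}, which produces edge residuals that you correctly flag as the main obstacle. The paper sidesteps this entirely by using the simpler formula \eqref{eq:iby-de} instead: the only tangential contribution is $([\partial_{nt} R_h u_h]_F,\partial_t w)_F$, which an inverse estimate on the face bounds by $h_F^{-1/2}\|[\partial_n R_h u_h]_F\|_F$, i.e.\ by $\mu$. So the ``main obstacle'' you anticipate never appears. More importantly, the paper does not subtract a generic conforming quasi-interpolant $I_C w$; it splits $v\in V$ as $(v-J_hI_hv)+J_hI_hv$ using the right-inverse $J_h$ of \Cref{lem:right-inverse}. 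The point is the orthogonality $J_hI_hv-\Pi_\T^\ell v\perp P_{\max\{k+2,\ell\}}(\T)$ built into $J_h$, which turns the source term into a genuine data oscillation and is then absorbed into the volume residual via $\mathrm{osc}(f,\T)\le\|h_\T^2(f-\Delta_\pw^2 R_hu_h)\|_\Omega$. A Cl\'ement- or Scott--Zhang-type $I_C$ lacks this orthogonality, and without it the term $(f,(1-\Pi_\T^0)I_Cw)_\Omega$ cannot be controlled by $\|\nabla^2 w\|_\Omega$ (only one Poincar\'e is available, not two).

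\textbf{Efficiency of $\mu$.} This is not a bubble-function argument. The trace inequality applied to $[R_hu_h]_F=-[u-R_hu_h]_F$ would require $\|h_\T^{-2}(u-R_hu_h)\|_\Omega\lesssim\|\nabla_\pw^2(u-R_hu_h)\|_\Omega$, which fails (no Poincar\'e for $u-R_hu_h$). The paper instead writes $R_hu_h=G_hu-R_he_h$: the jumps of $G_hu$ are handled via $u-G_hu$ (which \emph{does} satisfy \eqref{ineq:v-Gv-Poincare}), while the jumps of $R_he_h$ are bounded by $\|e_h\|_h$ through the structural estimates \eqref{ineq:jump_R}--\eqref{ineq:normal_jump_R} established in the proof of continuity of $J_h$; then \Cref{thm:a-priori} closes the loop. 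Your reference to the Poincar\'e bound behind \eqref{ineq:v-Gv-Poincare} suggests you may have this in mind, but it needs to be made explicit.

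\textbf{Elliptic regularity.} Drop the appeal to $u\in H^{2+s}(\Omega)$ in the efficiency proof; it is not needed and invoking it would make the a~posteriori estimate non-robust. The paper's efficiency argument uses only $u\in V$ and the a~priori estimate. (For the residual and normal-normal jump in the case $\ell=0$, the paper replaces bubble functions by setting degrees of freedom in a $C^1$ element, but acknowledges bubble techniques as an equivalent alternative, so that part of your plan is fine.)
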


\begin{proof}
Throughout this proof, we abbreviate $e \coloneqq u - R_h u_h$.\\[-0.5em]

\noindent\emph{Reliability.~}
The proof departs from the error decomposition \cite[Theorem 1]{CarstensenGallistlHu2013}
\begin{align}\label{eq:proof-a-post-split}
	\|\nabla_\pw^2 e\|_\Omega^2 = \sup_{v \in V\setminus\{0\}} \frac{(\nabla_\pw^2 e, \nabla^2 v)_\Omega^2}{\|\nabla^2 v\|_\Omega^2}
	+ \min_{\varphi \in V}\|\nabla^2_\pw (\varphi - R_h u_h)\|_\Omega^2.
\end{align}
A bound for the conformity error $\min_{\varphi \in V}\|\nabla^2_\pw (\varphi - R_h u_h)\|_\Omega$ is given by the choice $\varphi \coloneqq \mathcal{A}_h u_h \in V$ in \eqref{ineq:appr-averaging}, namely,
\begin{align}\label{ineq:proof-a-post-conformity-error}
	\min_{\varphi \in V}\|\nabla^2_\pw (\varphi - R_h u_h)\|_\Omega^2 \lesssim \sum_{F \in \F} (h_F^{-3}\|[R_h u_h]_F\|_F^2 + h_F^{-1}\|[\partial_n R_h u_h]_F\|_F^2).
\end{align}
It remains to bound the consistency error
\begin{align}\label{ineq:proof-a-post-consistency-error}
	\sup_{v \in V\setminus\{0\}} \frac{(\nabla_\pw^2 e, \nabla^2 v)_\Omega}{\|\nabla^2 v\|_\Omega} \lesssim \eta.
\end{align}
We infer from \Cref{lem:commuting}, the variational formulations \eqref{def:source-problem}, and \eqref{def:discrete_problem} that
\begin{align}\label{eq:proof-a-post-case-2}
	(\nabla_\pw^2 e, \nabla^2 v)_\Omega &= (\nabla^2 u, \nabla^2 v)_\Omega - (\nabla_\pw^2 R_h u_h, \nabla_\pw^2 G_h v)_\Omega\nonumber\\
	&= (f, (1 - \Pi_\T^\ell) v)_\Omega + s_h(u_h, I_h v).
\end{align}
Since $\ell \geq 1$, $(f, (1 - \Pi_\T^\ell) v)_\Omega = ((1 - \Pi_\T^\ell) f,  (1 - \Pi_\T^1) v)_\Omega \lesssim \mathrm{osc}(f,\T)\|\nabla^2 v\|_\Omega$. This, \eqref{eq:proof-a-post-case-2}, the Cauchy inequality, and \Cref{lem:best-appr-stab} lead to
\begin{align*}
	(\nabla_\pw^2 e, \nabla^2 v)_\Omega \lesssim (\mathrm{osc}(f,\T) + |u_h|_{s_h})\|\nabla^2 v\|_\Omega,
\end{align*}
which implies \eqref{ineq:proof-a-post-consistency-error}.
This and \eqref{eq:proof-a-post-split}--\eqref{ineq:proof-a-post-conformity-error} conclude the reliability $\|\nabla_\pw^2 e\|_\Omega \lesssim \eta$.\\[-0.5em]

\noindent\emph{Efficiency.~} We start with the efficiency of $\mu$ from \eqref{def:mu} and adopt the notation $e_h \coloneqq I_h u - u_h$ from the proof of \Cref{thm:a-priori}. From \eqref{ineq:jump_R}--\eqref{ineq:normal_jump_R}, we deduce
\begin{align*}
	\sum_{F \in \F} (h_F^{-3}\|[R_h e_h]_F\|_F^2 + h_F^{-1}\|\partial_n[R_h e_h]_F\|_F^2)
	\lesssim |e_h|_{s_h}^2 + \|\nabla_\pw^2 R_h e_h\|^2_\Omega = \|e_h\|_h^2.
\end{align*}
This, the triangle, and trace inequality prove
\begin{align*}
	\mu^2 &\lesssim \sum_{F \in \F} (h_F^{-3}\|[G_h u]_F\|_F^2 + h_F^{-1}\|[\partial_n G_h u]_F\|_F^2) + \|e_h\|_h^2\nonumber\\
	&\lesssim \|h_\T^{-2}(u - G_h u)\|_\Omega^2 + \|h_\T^{-1}\nabla_\pw(u - G_h u)\|_\Omega^2 + \|\nabla_\pw^2(u - G_h u)\|^2_\Omega + \|e_h\|_h^2.
\end{align*}
In combination with \eqref{ineq:v-Gv-Poincare}, we obtain $\mu \lesssim \|e_h\|_h + \|\nabla_\pw^2(u - G_h u)\|_\Omega$.
The a~priori error estimate of \Cref{thm:a-priori} implies the efficiency 
\begin{align}\label{ineq:proof-efficiency-mu}
	\mu \lesssim \mathrm{osc}(f, \T) + \|\nabla_\pw^2(u - R_h u_h)\|_\Omega.
\end{align}
Since $|u_h|_{s_h} \lesssim \|\nabla_\pw^2 e\|_\Omega + \mathrm{osc}(f,\T)$ from \Cref{thm:a-priori}, $\eta \lesssim \|\nabla_\pw^2 e\|_\Omega + \mathrm{osc}(f,\T)$.
\end{proof}

\begin{remark}[bounds for conformity error]
    The constant hidden in the notation $\lesssim$ in \eqref{ineq:proof-a-post-conformity-error} critically depends on the polynomial degree $k$. Other strategies to bound the conformity error in the literature include the virtual enriching operator in \cite{BrennerSung2019}, where the dependency of the constant on the polynomial degree $k$ is currently unknown, and \cite{DongMascottoSutton2021} with explicit error bounds in $h$ and $k$ for dG discretizations.
\end{remark}

\begin{remark}[extension to polytopal meshes]\label{rem:polytopal-mesh}
    To extend the analysis of this paper to polytopal meshes, let us consider a partition $\mathcal{T}$ of $\Omega$ into closed polytopes with a given set $\mathcal{F}$ of faces, cf., e.g, \cite[Definition 1.4]{DiPietoDroniou2020} or \cite[Section 1.2.1]{CicuttinErnPignet2021} for a precise definition. Let $\mathcal{E}$ be a partition of $\partial \mathcal{F} \coloneqq \cup_{F \in \mathcal{F}} \partial F$ into closed $n-2$ dimensional polytopes with nonzero $n-2$ dimensional Lebesgue measure (called edges) such that $\cup \mathcal{E} = \partial \mathcal{F}$ and the intersection of any two distinct elements of $\mathcal{E}$ has vanishing $n-2$ dimensional Lebesgue measure. (In 2d, $n-2$ dimensional polytopes are points and, in 3d, they are straight lines.)

    For the mathematical analysis of the proposed numerical results, we assume that there exists a matching regular subtriangulation $\hat{\mathcal{T}}$ into simplices with the set $\hat{\mathcal{F}}$ of faces and the set $\hat{\mathcal{E}}$ of edges. Here, $\hat{\mathcal{T}}$ is a matching subtriangulation of $\mathcal{T}$ if
    \begin{enumerate}
        \item[(a)] for every simplex $\hat{T} \in \hat{\mathcal{T}}$, there exists a unique cell $T \in \mathcal{T}$ with $\hat{T} \subset T$,\\
        \item[(b)] for every face $\hat{F} \in \hat{\mathcal{F}}$ and $F \in \mathcal{F}$, either $\hat{F} \subset F$ or $\hat{F} \cap F$ has vanishing $n-1$ dimensional Lebesgue measure,\\
        \item[(c)] for every edge $\hat{E} \in \hat{\mathcal{E}}$ and $E \in \mathcal{E}$, either $\hat{E} \subset E$ or $\hat{E} \cap E$ has vanishing $n-2$ dimensional Lebesgue measure.
    \end{enumerate}
    The last condition is only meaningful for $n \geq 3$. If $n=2$, the last condition is replaced by either $\hat{E} = E$ or $\hat{E} \neq E$.
    This is a standard assumption so that, for instance, discrete trace and inverse inequalities hold and the constants therein depend on the shape regularity of the subtriangulation $\hat{\mathcal{T}}$, cf.~\cite[Definition 1.37]{DiPietroErn2015}, \cite[Definition 1.8]{DiPietoDroniou2020}, or \cite[Section 2.1.1]{CicuttinErnPignet2021}.
    The construction of a right-inverse $J_h$ of $I_h$ in \Cref{lem:right-inverse} on the simplicial submesh is straightforward. Apart from this, the remaining parts of \Cref{sec:error-analysis} carries over verbatim to polygonal meshes. This also applies to the analysis of the eigenvalue problem \eqref{def:pde-var} in \Cref{sec:eigenvalue_problem} below with the exception of LEB, where the convexity of cells is required for an explicit constant in the Poincar\'e inequality.
    
    Furthermore, we mention that employing a Lehrenfeld-Sch\"oberl typed stabilization in the case $\ell = k+2$ leads to the upper bound $s_h(I_h v, I_h v) \lesssim \|\nabla_\pw^2(v - \Pi_{\mathcal{T}}^{k+2} v)\|_{\Omega}$ for any $v \in H^2(\Omega)$ instead of \Cref{lem:best-appr-stab}. Thus, $\|\nabla_\pw^2(u - \Pi_{\mathcal{T}}^{k+2} u)\|_{\Omega}$ occurs as an additional error term on the right-hand side of the a~priori error estimate \eqref{ineq:a-priori}. On simplicial meshes,
    \begin{align*}
         \min_{p \in P_{k+2}(\mathcal{T})} \|\nabla^2_\pw(u - p)\|_\Omega \approx \|\nabla_\pw^2(u - \Pi_{\mathcal{T}}^{k+2} u)\|_{\Omega},
    \end{align*}
    cf.~\cite[Theorem 3.1]{CarstensenZhaiZhang2020} for the corresponding inequality in $H^1_0(\Omega)$. However, it is not known whether this holds on polytopal meshes as the local arguments of \cite{CarstensenZhaiZhang2020} lead to constants depending on the shape of the cells.
\end{remark}

\section{Biharmonic eigenvalue problem}\label{sec:eigenvalue_problem}
This section establishes a computational lower eigenvalue bound (LEB) for the biharmonic eigenvalue problem \eqref{def:pde-var} with (a~priori) optimal convergence rates and extends the analysis of \Cref{sec:error-analysis} to \eqref{def:pde-var}.

Recall that \eqref{def:pde-var} is a compact symmetric eigenvalue problem with positive eigenvalues
\begin{align*}
	0 < \lambda(1) < \lambda(2) \leq \lambda(3) \leq \dots
\end{align*}
and corresponding normalized eigenvectors $u(1), u(2), u(3), \dots$ with 
\begin{align*}
	(u(j), u(k))_\Omega = \delta_{jk} \quad\text{for any } j,k \in \mathbb{N},
\end{align*}
where $\delta_{jk}$ is the Kronecker delta.

\subsection{Discrete eigenvalue problem}
Higher-order computational LEB require a fine-tuning of the stabilization and explicit constants in \Cref{lem:best-appr-stab}. For the latter purpose, we introduce the following weights. For a simplex $T \in \T$ with a face $F$ and an edge $E$, we define
\begin{align*}
    \ell_T(F) \coloneqq h_T^2|F|/|T| \quad\text{and}\quad \ell_T(E,F) \coloneqq \begin{cases}
        h_T^2|E|/|F| &\mbox{if } n = 3,\\
        h_T^2/|F| &\mbox{if } n = 2.
    \end{cases} 
\end{align*}
Given a simplex $T \in \T$ and a positive parameter $\sigma > 0$, we replace the local stabilization $s_T$ in \eqref{def:stab_loc} by
\begin{align}\label{def:stab-eig}
	&s_T(u_h,v_h) \coloneqq \sigma h_T^{-4} (\Pi_T^{\ell}(u_{T} - R_T u_h), \Pi_{T}^{\ell}(v_T - R_T v_h))_{T}\nonumber\\
	&\quad + \sigma h_T^{-2} \sum_{F \in \mathcal{F}(T)} \ell_T^{-1}(F)(\Pi_{F}^{m}(u_{\F(T)} - R_T u_h),\Pi_{F}^{m}(v_{\F(T)} - R_T v_h))_{F}\nonumber\\
	&\quad +\sigma\sum_{F\in\mathcal{F}(T)} \ell_T^{-1}(F)(\Pi_{F}^k(\alpha_{\F(T)} - \partial_n R_T u_h),\Pi_{F}^k(\beta_{\F(T)}-\partial_n R_T v_h))_{F}\\
	&\quad + \sigma \sum_{F \in \mathcal{F}(T)}\ell_T^{-1}(F)\sum_{E \in \E(F)} \ell_T^{-1}(E,F)(\Pi_{E}^{k}(u_{\E(T)} - R_T u_h), \Pi_{E}^{k}(v_{\E(T)} - R_T v_h))_{E}.\nonumber
\end{align}
We mention that the weights $\ell(F,T)$ and $\ell(E,F)$ are, up to multiplicative constants depending on the shape of $T$, equivalent to $h_T$ for any $F \in \F(T)$ and $E \in \E(F)$. Thus, this stabilization is equivalent to $s_T$ of \eqref{def:stab_loc}.
Apart from this modification for the stabilization, we adopt the remaining notation from \Cref{sec:discretization}.
Let $\ell \coloneqq k+2$ so that the discrete ansatz space reads $V_h \coloneqq P_{k+2}(\T) \times P_m(\F(\Omega)) \times P_k(\F(\Omega)) \times P_k(\E(\Omega))$.
The right-hand side of \eqref{def:pde-var} is approximated by the bilinear form
\begin{align*}
	b_h(u_h,v_h) &\coloneqq (u_\T,v_\T)_\Omega
\end{align*}
for $u_h = (u_\T, u_\F, \alpha_\F, u_\E), v_h = (v_\T, v_\F, \beta_\F, v_\E) \in V_h$, which induces the seminorm $|\bullet|_{b_h} \coloneqq b_h(\bullet,\bullet)$.
The discrete eigenvalue problem seeks the eigenpairs $(\lambda_h, u_h) \in \mathbb{R} \times V_h$ such that
\begin{align}\label{def:discrete-eig-problem}
	a_h(u_h,v_h) = \lambda_h b_h(u_h,v_h) \quad\text{for any } v_h \in V_h,
\end{align}
where $s_h(u_h, v_h) \coloneqq \sum_{T \in \T} s_T(u_h|_T, v_h|_T)$ with $s_T$ from 
\eqref{def:stab-eig}. On the discrete level,
the eigenvalue problem \eqref{def:discrete-eig-problem} leads to $N \coloneqq \dim(P_{k+2}(\T)
)$ positive eigenvalues
\begin{align*}
	0 < \lambda_h(1) \leq \dots \leq \lambda_h(N)
\end{align*}
and the associated discrete eigenvectors $u_h(1), \dots, u_h(N)$ with
\begin{align*}
	b_h(u_h(j),u_h(k)) = \delta_{jk}.
\end{align*}

\subsection{Lower eigenvalue bounds}\label{sec:LEB}
The HHO eigensolver of this paper satisfies the framework of \cite{CarstensenZhaiZhang2020,Tran2024x2} and allows for the following LEB.
\begin{theorem}[LEB]\label{thm:LEB}
    For any $1 \leq j \leq N$,
    \begin{equation*}
        \mathrm{LEB}(j) \coloneqq \min\{1,1/(\alpha +\beta\lambda_h(j))\}\lambda_h(j) \leq\lambda(j)
    \end{equation*}
    with the constants $\alpha \coloneqq \sigma(1/\pi^4 + c_\mathrm{tr}/\pi^2 + c_\mathrm{tr} + c_\mathrm{tr}(2/\pi + n/\pi^2))$, $\beta \coloneqq h^4/\pi^4$,
    and $c_\mathrm{tr} \coloneqq (2+(n+1)/\pi)/\pi$, where $h = \|h_\T\|_{L^\infty(\Omega)}$ is the maximum mesh size in $\T$.
\end{theorem}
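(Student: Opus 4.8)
The plan is to deduce $\mathrm{LEB}(j) \le \lambda(j)$ from the Courant--Fischer characterisation of the $j$-th eigenvalue on the continuous and the discrete level, arguing by contradiction: assume $\lambda(j) < \mathrm{LEB}(j)$ and derive $\lambda_h(j) < \lambda_h(j)$.

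The quantitative heart is an \emph{explicit} version of \Cref{lem:best-appr-stab} for the modified stabilisation \eqref{def:stab-eig}: for every $v \in V$,
\begin{equation*}
	s_h(I_h v, I_h v) \le \alpha\, \|\nabla^2_\pw(v - G_h v)\|_\Omega^2 ,
\end{equation*}
with exactly the $\alpha$ from the statement. I would prove this elementwise. On a simplex $T$, \Cref{lem:commuting} identifies $\|\nabla^2(v - R_T I_T v)\|_T$ with the local best-approximation error, and the constraints \eqref{def:rec_constr} (equivalently the vanishing means \eqref{eq:vanishing_mean}) let the Poincar\'e inequality with the convex-domain constant $1/\pi$ be applied twice, giving $\|v - R_T I_T v\|_T \le (h_T/\pi)^2 \|\nabla^2(v - R_T I_T v)\|_T$ and $\|\nabla(v - R_T I_T v)\|_T \le (h_T/\pi)\|\nabla^2(v - R_T I_T v)\|_T$. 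A trace inequality on $T$ in which the tailored weights $\ell_T(F)$, $\ell_T(E,F)$ are engineered to absorb precisely the geometric ratios $|F|/|T|$ and $|E|/|F|$ then turns the four summands of $s_T$ in \eqref{def:stab-eig} into the four summands $1/\pi^4$, $c_\mathrm{tr}/\pi^2$, $c_\mathrm{tr}$, $c_\mathrm{tr}(2/\pi + n/\pi^2)$ of $\alpha/\sigma$, with $c_\mathrm{tr} = (2 + (n+1)/\pi)/\pi$. Alongside this I would record the second, essentially available estimate $\|(1 - \Pi_\T^{k+2}) v\|_\Omega^2 \le \beta\, \|\nabla^2_\pw(v - G_h v)\|_\Omega^2$ with $\beta = h^4/\pi^4$, which is the squared form of \eqref{ineq:v-Piv}.

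Next I would combine these with the commuting property. Since $R_h I_h v = G_h v$, \eqref{eq:commuting-G} gives the Pythagoras identity $\|\nabla^2 v\|_\Omega^2 = \|\nabla^2_\pw(v - G_h v)\|_\Omega^2 + \|\nabla^2_\pw G_h v\|_\Omega^2$, so abbreviating $t \coloneqq \|\nabla^2_\pw(v - G_h v)\|_\Omega^2$ and $\mu \coloneqq \|\nabla^2 v\|_\Omega^2/\|v\|_\Omega^2$,
\begin{equation*}
	a_h(I_h v, I_h v) \le \|\nabla^2 v\|_\Omega^2 - (1 - \alpha)\, t , \qquad b_h(I_h v, I_h v) \ge \|v\|_\Omega^2 - \beta\, t > 0 .
\end{equation*}
Bounding the Rayleigh quotient $a_h(I_h v, I_h v)/b_h(I_h v, I_h v)$ by the maximum of $(\|\nabla^2 v\|_\Omega^2 - (1-\alpha)t)/(\|v\|_\Omega^2 - \beta t)$ over the range $t \in [0, \|\nabla^2 v\|_\Omega^2]$ of its actual value (the derivative in $t$ has the sign of $\alpha + \beta\mu - 1$), one obtains $a_h(I_h v, I_h v)/b_h(I_h v, I_h v) \le \Psi(\mu)$ with $\Psi(\mu) \coloneqq \mu \max\{1, \alpha/(1 - \beta\mu)\}$, which is strictly increasing on $[0, 1/\beta)$ and, by a one-line computation from its definition, satisfies $\Psi^{-1}(\nu) = \nu \min\{1, 1/(\alpha + \beta\nu)\}$; in particular $\Psi(\mathrm{LEB}(j)) = \lambda_h(j)$, and $\mathrm{LEB}(j) < 1/\beta$ because $\alpha > 0$.

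Finally I would take $E_j \coloneqq \operatorname{span}\{u(1), \dots, u(j)\}$, on which $\mu \le \lambda(j)$. The contradiction hypothesis gives $\lambda(j) < \mathrm{LEB}(j) < 1/\beta$, hence $b_h(I_h v, I_h v) \ge (1 - \beta\lambda(j))\|v\|_\Omega^2 > 0$ on $E_j \setminus \{0\}$, so $I_h$ is injective on $E_j$ and $b_h$ is positive definite on the $j$-dimensional space $I_h E_j$; the discrete min--max principle then yields
\begin{equation*}
	\lambda_h(j) \le \max_{v \in E_j \setminus \{0\}} \frac{a_h(I_h v, I_h v)}{b_h(I_h v, I_h v)} \le \Psi(\lambda(j)) < \Psi(\mathrm{LEB}(j)) = \lambda_h(j) ,
\end{equation*}
the desired contradiction. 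The main obstacle is the first estimate, i.e.\ pinning down the explicit value of $\alpha$: in contrast to \Cref{lem:best-appr-stab} one needs constant-tracking forms of the Poincar\'e and trace inequalities, and the whole scheme works only because the weights $\ell_T(F)$, $\ell_T(E,F)$ in \eqref{def:stab-eig} are chosen so that the trace contributions collapse to mesh-independent constants; everything after that estimate is the elementary optimisation in $t$, the identity $\Psi^{-1}(\lambda_h(j)) = \mathrm{LEB}(j)$, and the two min--max principles.
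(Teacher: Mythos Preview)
Your proposal is correct and follows essentially the same approach as the paper: the three key ingredients you identify---the Pythagoras identity from the commuting property, the explicit stabilisation bound with constant~$\alpha$ (this is \Cref{lem:best-appr-stab-eig} in the paper, proved there via \cite[Proposition~4.3 and Ineq.~(4.1)]{CarstensenZhaiZhang2020}), and the $b_h$ bound with constant~$\beta$ from \eqref{ineq:v-Piv}---are exactly the three conditions the paper verifies. The only difference is that the paper then invokes the abstract result \cite[Theorem~2.1]{Tran2024x2} as a black box, whereas you carry out the underlying min--max/contradiction argument and the elementary optimisation in~$t$ explicitly; your self-contained derivation of $\Psi^{-1}(\nu)=\nu\min\{1,1/(\alpha+\beta\nu)\}$ is precisely what that cited theorem encapsulates.
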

\begin{remark}[quasi optimality]\label{rem:direct-LEB}
    Since $\beta \to 0$ as $h \to 0$, the choice $\sigma < (1/\pi^4 + c_\mathrm{tr}/\pi^2 + c_\mathrm{tr} + c_\mathrm{tr}(2/\pi + n/\pi^2))^{-1}$ leads to $\mathrm{LEB}(j) = \lambda_h(j)$ for sufficiently small mesh sizes $h$. The a~priori error estimate of \Cref{thm:a-priori-eig} proves the quasi-best approximation $\lambda(j) - \mathrm{LEB}(j) \lesssim \|\nabla_\pw^2(u(j) - G_h u(j))\|_\Omega^2$.
\end{remark}
The proof of \Cref{thm:LEB} utilizes the following explicit constant in \Cref{lem:best-appr-stab}.
\begin{lemma}[optimality of $s_T$]\label{lem:best-appr-stab-eig}
    Let a simplex $T$ be given. Any $v \in H^2(T)$ satisfies
    \begin{align*}
        s_T(I_T v, I_T v) \leq (1/\pi^4 + c_\mathrm{tr}/\pi^2 + c_\mathrm{tr} + c_\mathrm{tr}(2/\pi + n/\pi^2)) \|\nabla^2(v - G_h v)\|_T^2.
    \end{align*}
\end{lemma}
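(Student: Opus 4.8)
The plan is to revisit the proof of \Cref{lem:best-appr-stab} and track every constant explicitly, using the sharp Poincaré and trace constants available on a simplex. Write $\delta \coloneqq v - R_T I_T v = v - G_h v$ (on the single simplex $T$). The key structural facts are exactly as in the proof of \Cref{lem:best-appr-stab}: the constraints \eqref{def:rec_constr} give the vanishing mean properties \eqref{eq:vanishing_mean}, namely $\int_T \delta \dx = 0$ and $\int_T \nabla \delta \dx = 0$; moreover $\nabla^2\delta$ has the best-approximation property \eqref{ineq:RI-best-appr}. The four terms of $s_T(I_Tv,I_Tv)$ in \eqref{def:stab-eig} are, after discarding the $L^2$-projections $\Pi$ (which only decrease the norms by optimality), controlled by $\sigma$ times, respectively, $h_T^{-4}\|\delta\|_T^2$, $\sum_F \ell_T^{-1}(F) h_T^{-2}\|\delta\|_F^2$, $\sum_F \ell_T^{-1}(F)\|\nabla\delta\cdot\nu_F\|_F^2$, and $\sum_F \ell_T^{-1}(F)\sum_{E\in\E(F)}\ell_T^{-1}(E,F)\|\delta\|_E^2$.

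First I would handle the volume term: by the Poincaré inequality on a convex set with constant $1/\pi$ applied twice (once to $\delta$ using $\int_T\delta=0$ combined with a Poincaré bound for $\nabla\delta$ using $\int_T\nabla\delta=0$, exactly as in \eqref{ineq:v-Gv-Poincare}), $h_T^{-2}\|\delta\|_T \leq \pi^{-2}\|\nabla^2\delta\|_T$, contributing $\pi^{-4}$. Next, for the face terms I would use a scaled trace inequality on a simplex of the form $\|w\|_F^2 \leq c_{\mathrm{tr}}\, \ell_T(F)\, h_T^{-1}(\|w\|_T^2 + h_T^2\|\nabla w\|_T^2)$ — this is where the weight $\ell_T(F) = h_T^2|F|/|T|$ enters; it is designed precisely so that the trace constant becomes the explicit $c_{\mathrm{tr}}$ in the statement. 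Applying this with $w = \delta$ and then Poincaré as above gives $\ell_T^{-1}(F)h_T^{-2}\|\delta\|_F^2 \leq c_{\mathrm{tr}} h_T^{-3}(\|\delta\|_T^2 + h_T^2\|\nabla\delta\|_T^2) \leq c_{\mathrm{tr}}(\pi^{-2}+1)\pi^{-2}\|\nabla^2\delta\|_T^2$, but I would instead split it into the $\pi^{-2}$-from-$\|\delta\|_T$ part and the $1$-from-$h_T\|\nabla\delta\|_T$ part to match the constant $c_{\mathrm{tr}}/\pi^2$ (face potential term). For the normal-derivative face term, the trace inequality applied to $\nabla\delta$ gives $\ell_T^{-1}(F)\|\nabla\delta\cdot\nu_F\|_F^2 \leq c_{\mathrm{tr}} h_T^{-1}(\|\nabla\delta\|_T^2 + h_T^2\|\nabla^2\delta\|_T^2)$, and bounding $h_T^{-2}\|\nabla\delta\|_T^2 \leq \pi^{-2}\|\nabla^2\delta\|_T^2$ yields the $c_{\mathrm{tr}}(\pi^{-2}+1)$ — matching the $c_{\mathrm{tr}}/\pi^2 + c_{\mathrm{tr}}$ contributions. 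Finally the edge term needs a trace inequality from a face to its edges, $\|w\|_E^2 \leq c_{\mathrm{tr}}\ell_T(E,F)h_T^{-1}(\|w\|_F^2 + h_T^2\|\nabla_t w\|_F^2)$, composed once more with the face-to-cell trace inequality; two applications of the trace estimate plus the two Poincaré steps produce the remaining term, which I expect to organize so that the $n$ in $n/\pi^2$ comes from counting: summing over the $n$ edges of each face $F$ (in 3D a 2-simplex has $3$ edges, so careful bookkeeping of the shape-dependent equivalences is needed, but the weights $\ell_T(E,F)$ are chosen to absorb the face/edge measure ratios and leave the clean constant $c_{\mathrm{tr}}(2/\pi + n/\pi^2)$).

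The main obstacle will be pinning down the exact form of the simplex trace inequality so that the constant is literally $c_{\mathrm{tr}} = (2+(n+1)/\pi)/\pi$ rather than some larger equivalent constant, and correctly propagating it through the composed cell$\to$face$\to$edge traces without picking up extra multiplicative factors. In particular I would need the sharp trace-type estimate on a $d$-simplex relating the boundary $L^2$ norm on a facet to the interior norm and gradient, with the facet-measure-to-volume ratio made explicit; the standard reference estimates give exactly a bound of the shape $\|w\|_{\partial T}^2 \le \|w\|_T( (d/h_T)\|w\|_T + 2\|\nabla w\|_T)$-type on a simplex, and I expect the $2$ and the $(n+1)$ (number of facets / edges) to enter through this route. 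Once that inequality is fixed with the right constant, the rest is the bookkeeping above: four terms, each bounded by the stated summand of $(1/\pi^4 + c_{\mathrm{tr}}/\pi^2 + c_{\mathrm{tr}} + c_{\mathrm{tr}}(2/\pi + n/\pi^2))\|\nabla^2\delta\|_T^2$, and summing gives the claim with $\|\nabla^2\delta\|_T = \|\nabla^2(v-G_h v)\|_T$ by \Cref{lem:commuting}.
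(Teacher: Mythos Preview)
Your overall strategy matches the paper's proof: set $r\coloneqq v-G_hv$, use the vanishing means \eqref{eq:vanishing_mean} and double Poincar\'e for the volume term, and bound the face and edge terms by sharp trace inequalities. The first three terms are handled exactly as you describe, and the paper cites \cite[Proposition~4.3]{CarstensenZhaiZhang2020} for the summed Poincar\'e--trace estimate
\[
\sum_{F\in\mathcal F(T)}\ell_T^{-1}(F)\|w\|_F^2\le c_{\mathrm{tr}}\|\nabla w\|_T^2\qquad\text{whenever }\textstyle\int_T w=0,
\]
applied with $w=r$ (then one more Poincar\'e) and $w=\nabla r$. Note that this is a \emph{summed} inequality; your per-face form $\|w\|_F^2\le c_{\mathrm{tr}}\ell_T(F)h_T^{-1}(\|w\|_T^2+h_T^2\|\nabla w\|_T^2)$ would, after summing over the $n+1$ facets, overshoot by a factor $n+1$.

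The edge term is where your sketch diverges from the paper and where a genuine gap sits. Composing two additive $c_{\mathrm{tr}}$-type trace estimates as you propose cannot produce the stated constant (and the face-level Poincar\'e step would require $\int_F r=0$, which you do not have). The paper instead first applies the \emph{raw multiplicative} trace identity \cite[Ineq.~(4.1)]{CarstensenZhaiZhang2020} on the $(n{-}1)$-simplex $F$, which needs no mean-zero assumption:
\[
\sum_{E\in\mathcal E(F)}\ell_T^{-1}(E,F)\|r\|_E^2\le \frac{n}{h_T^2}\|r\|_F^2+\frac{2}{h_T}\|r\|_F\|\partial_t r\|_F.
\]
Summing over $F$ with weights $\ell_T^{-1}(F)$, using Cauchy--Schwarz, and then applying the summed Poincar\'e--trace bound once (to $r$ and to $\nabla r$) gives $(n c_{\mathrm{tr}}/h_T^2)\|\nabla r\|_T^2+(2c_{\mathrm{tr}}/h_T)\|\nabla r\|_T\|\nabla^2 r\|_T$; a final Poincar\'e on $\nabla r$ yields exactly $c_{\mathrm{tr}}(n/\pi^2+2/\pi)\|\nabla^2 r\|_T^2$. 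So the $n$ is not an edge count but the leading coefficient in the raw trace inequality on an $(n{-}1)$-simplex, and only one factor of $c_{\mathrm{tr}}$ appears because the face-to-edge step carries no Poincar\'e.
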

\begin{proof}
    The function $r \coloneqq v - G_h v$ satisfies the vanishing mean properties \eqref{eq:vanishing_mean}. Hence, Proposition 4.3 of \cite{CarstensenZhaiZhang2020} and the Poincar\'e inequality prove
    \begin{align}\label{ineq:proof-stab-const}
        h_T^{-4} \|r\|^2_T + \sum_{F \in \mathcal{F}(T)} (h_T^{-2} \ell_T^{-1}(F) \|r\|_F^2 + \ell_T^{-1}(F) \|\partial_n r\|_F^2)&\nonumber\\
        \leq (1/\pi^4 + c_\mathrm{tr}/\pi^2 + c_\mathrm{tr})\|\nabla^2 r\|_T^2&.
    \end{align}
    From \cite[Ineq.~(4.1)]{CarstensenZhaiZhang2020} (applied to the face $F$), we infer, for any $F \in \F(T)$, that
    \begin{align*}
        \sum_{E \in \E(F)} \ell_T^{-1}(E,F) \|r\|_E^2 \leq \frac{n}{h_T^2}\|r\|^2_F + \frac{2}{h_T}\|r\|_F\|\partial_t r\|_F.
    \end{align*}
    The sum of this over all sides $F \in \F(T)$, the Cauchy inequality, and \cite[Proposition 4.3]{CarstensenZhaiZhang2020} provide
    \begin{align*}
        &\sum_{F \in \mathcal{F}(T)}\ell_T^{-1}(F)\sum_{E \in \E(F)} \ell_T^{-1}(E,F) \|r\|_E^2 \leq \frac{n}{h_T^2}\sum_{F \in \F(T)} \ell_T^{-1}(F) \|r\|_F^2\\
        &\qquad + \frac{2}{h_T}\big(\sum_{F \in \F(T)} \ell_T^{-1}(F) \|r\|_F^2\big)^{1/2}\big(\sum_{F \in \F(T)} \ell_T^{-1}(F) \|\partial_t r\|_F^2\big)^{1/2}\\
        & \leq \big(\frac{n c_\mathrm{tr}}{h_T^2}\|\nabla r\|_T^2 + \frac{2c_\mathrm{tr}}{h_T}\|\nabla r\|_T\|\nabla^2 r\|_T\big).
    \end{align*}
    This, \eqref{ineq:proof-stab-const}, and \eqref{ineq:v-Gv-Poincare} conclude the assertion.
\end{proof}

\begin{proof}[Proof of \Cref{thm:LEB}]
    Three conditions involving the Galerkin projection $G_h = R_h \circ I_h : V \to P_{k+2}(\T)$ in \cite{Tran2024x2} give rise to LEB.
    From \Cref{lem:commuting}, we deduce the first condition
    \begin{align*}
        \|\nabla_\pw^2 R_h I_h v\|^2_\Omega = \|\nabla^2 v\|_\Omega^2 - \|\nabla_\pw^2(v - G_h v)\|^2_\Omega
    \end{align*}
    for any given $v \in V$.
    The second condition is the optimality of $s_T$ in \Cref{lem:best-appr-stab-eig}.
    Finally, \eqref{ineq:v-Piv} establishes the third condition
    \begin{align*}
        |I_h v|_{b_h}^2 &= \|\Pi_\T^{k+2} v\|_\Omega^2 = \|v\|_\Omega^2 - \|(1 - \Pi_\T^{k+2})v\|_\Omega^2\\ &\geq \|v\|_\Omega^2 - h^4\|\nabla_\pw^2(1 - G_h) v\|_\Omega^2/\pi^4.
    \end{align*}
    Theorem 3.3 and Remark 3.6 from \cite{Tran2024x2} concludes the assertion.
\end{proof}

\subsection{Spectral correctness}
Before error estimates are presented, we establish the spectral correctness of the HHO method following the arguments of \cite{CaloCicuttinDengErn2019} in the framework of Babu\v{s}ka-Osborn theory \cite{BabuskaOsborn1991}. Since the application of \cite{CaloCicuttinDengErn2019} is rather straight-forward, we only briefly outline the main arguments.
The eigenpairs of \eqref{def:pde-var} can be described by the eigenpairs of the operator $T : L^2(\Omega) \to L^2(\Omega)$ defined, for any $\phi \in L^2(\Omega)$, by
\begin{align*}
	(\nabla^2 T \phi, \nabla^2 \psi)_\Omega = (\phi, \psi)_\Omega \quad\text{for any } \psi \in V.
\end{align*}
In fact, $(\lambda,u)$ is an eigenpair of \eqref{def:pde-var} if and only if $(\lambda^{-1},u)$ is an eigenpair of $T$.
On the discrete level, the solution operator $T_h : L^2(\Omega) \to V_h$ associated with \eqref{def:discrete-eig-problem} maps $\phi \in L^2(\Omega)$ onto the unique solution $T_h \phi \in V_h$ to
\begin{align}\label{def:Th}
	a_h(T_h \phi, v_h) = (\phi, v_\T)_\Omega \quad\text{for any } v_h = (v_\T, v_\F, \beta_{\F}, v_\E) \in V_h.
\end{align}
To construct an appropriate operator $\hat{T}_h : L^2(\Omega) \to L^2(\Omega)$ for the setting of \cite{BabuskaOsborn1991},
let $V_\T \coloneqq P_\ell(\T)$ and $V_\Sigma \coloneqq P_m(\F(\Omega)) \times P_k(\F(\Omega)) \times P_k(\E(\Omega))$ so that $V_h = V_\T \times V_\Sigma$.
We consider the operator $Z_h : V_\T \to V_\Sigma$ defined, for any $v_\T \in V_\T$, by
\begin{align*}
	a_h((0, Z_h v_\T), (0, w_\Sigma)) = a_h((v_\T,0),(0,w_\Sigma)) \quad\text{for all } w_\Sigma \in V_\Sigma.
\end{align*}
Since $\|(0,\bullet)\|_h$ is a norm in $V_\Sigma$ from \Cref{thm:wellposedness}, this defines $Z_h v_\T$ uniquely.
We define the bilinear form
\begin{align*}
	a_\T(u_\T, v_\T) \coloneqq a_h((u_\T, Z_h u_\T), (v_\T, Z_h v_\T)) \quad\text{for any } u_\T, v_\T \in V_\T
\end{align*}
and introduce the solution operator $\hat{T}_h : L^2(\Omega) \to V_\T \subset L^2(\Omega)$ with
\begin{align*}
	a_\T(\hat{T}_h \phi, v_\T) = (\phi, v_\T)_\Omega \quad\text{for any } \phi \in L^2(\Omega) \text{ and } v_\T \in V_\T.
\end{align*}
The face and edge variables are eliminated by the operator $Z_h$ and so, 
\begin{align}\label{eq:T-hatT-relation}
	T_h \phi = (\hat{T}_h \phi, Z_h \hat{T}_h \phi) \quad\text{for any } \phi \in L^2(\Omega),
\end{align}
cf.~\cite[Lemma 3.1]{CaloCicuttinDengErn2019} for a precise proof of this. Thus, $\lambda_h$ is a discrete eigenvalue of \eqref{def:discrete-eig-problem} if and only if $\lambda_h^{-1}$ is an eigenvalue of $\hat{T}_h$.
The convergence of $\hat{T}_h$ to $T$ can be obtained from the error estimates of \Cref{sec:error-analysis}.
Recall the index $s$ of elliptic regularity from \eqref{ineq:elliptic-reg}.
\begin{theorem}[Convergence of solution operators]\label{thm:spectral-correctness}
	It holds
	\begin{align*}
		\|T - \hat{T}_h\|_{\mathcal{L}(L^2(\Omega); L^2(\Omega))} \lesssim h^s.
	\end{align*}
	In particular, for any eigenvalue $\mu$ of $T$ with algebraic multiplicity $r$, there exist $r$ eigenvalues $\mu_{h,1}, \dots, \mu_{h,r}$ of $\hat{T}_h$ with $\max_{1 \leq j \leq r}\{\mu - \mu_{h,r}\} \lesssim h^s$.
\end{theorem}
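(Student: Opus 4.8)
\emph{Proof strategy.} The plan is to reduce \Cref{thm:spectral-correctness} to the $L^2$ error estimate of \Cref{thm:L2} for the source problem and then to an abstract spectral-perturbation argument in the spirit of the Babu\v{s}ka--Osborn theory \cite{BabuskaOsborn1991,CaloCicuttinDengErn2019}. Fix $\phi \in L^2(\Omega)$ and set $u \coloneqq T\phi \in V$, so that $u$ solves \eqref{def:pde-var} with right-hand side $f = \phi$, while the associated discrete solution $u_h = T_h\phi \in V_h$ of \eqref{def:Th} has cell component $\hat T_h\phi = u_\T$ by \eqref{eq:T-hatT-relation}. Since the stabilization \eqref{def:stab-eig} is equivalent to \eqref{def:stab_loc} up to multiplicative constants depending only on $\varrho(\T)$, and $\ell = k+2$ is an admissible choice in \Cref{sec:discretization}, all estimates of \Cref{sec:error-analysis}, and \Cref{thm:L2} in particular, remain valid here. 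I would then start from the triangle inequality
\begin{align*}
	\|(T - \hat T_h)\phi\|_\Omega \leq \|(1 - \Pi_\T^\ell) u\|_\Omega + \|\Pi_\T^\ell(u - u_\T)\|_\Omega.
\end{align*}

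\emph{Operator-norm estimate.} For the second term, \Cref{thm:L2} with $f = \phi$ gives
\begin{align*}
	\|\Pi_\T^\ell(u - u_\T)\|_\Omega \lesssim h^s\Big(\min_{p \in P_{k+2}(\T)}\|\nabla_\pw^2(u - p)\|_\Omega + \mathrm{osc}(\phi,\T)\Big),
\end{align*}
and I would bound the right-hand side using the admissible choice $p = 0 \in P_{k+2}(\T)$, the estimate $\mathrm{osc}(\phi,\T) \leq \|h_\T^2\phi\|_\Omega \leq h^2\|\phi\|_\Omega$, and the elementary a~priori bound $\|\nabla^2 T\phi\|_\Omega \lesssim \|\phi\|_\Omega$ from the well-posedness of \eqref{def:pde-var}, which together yield $\|\Pi_\T^\ell(u - u_\T)\|_\Omega \lesssim h^s\|\phi\|_\Omega$. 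For the first term, $\ell = k+2 \geq 1$ and a standard polynomial approximation estimate give $\|(1 - \Pi_\T^\ell)u\|_\Omega \leq \|(1 - \Pi_\T^1)u\|_\Omega \lesssim \|h_\T^2\nabla_\pw^2 u\|_\Omega \leq h^2\|\nabla^2 u\|_\Omega \lesssim h^2\|\phi\|_\Omega$, and $h^2 \leq h^s$ since $s \leq 2$. The triangle inequality above and the supremum over $\phi$ with $\|\phi\|_\Omega = 1$ then prove $\|T - \hat T_h\|_{\mathcal L(L^2(\Omega);L^2(\Omega))} \lesssim h^s$.

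\emph{Eigenvalue convergence.} For the second assertion I would use that $T$ is compact and self-adjoint on $L^2(\Omega)$, that $\hat T_h$ is of finite rank and hence compact, and that $\hat T_h$ is self-adjoint: because $a_\T$ inherits the symmetry of $a_h$, the identity $(\hat T_h\phi,\psi)_\Omega = a_\T(\hat T_h\psi,\hat T_h\phi) = a_\T(\hat T_h\phi,\hat T_h\psi) = (\hat T_h\psi,\phi)_\Omega$ holds for all $\phi,\psi \in L^2(\Omega)$. Operator-norm convergence of self-adjoint compact operators carries over to the spectrum, either by the min--max principle or by the Babu\v{s}ka--Osborn theory: for an eigenvalue $\mu$ of $T$ of algebraic multiplicity $r$, there exist, counting multiplicity, eigenvalues $\mu_{h,1},\dots,\mu_{h,r}$ of $\hat T_h$ with
\begin{align*}
	\max_{1 \leq j \leq r}|\mu - \mu_{h,j}| \lesssim \|T - \hat T_h\|_{\mathcal L(L^2(\Omega);L^2(\Omega))} \lesssim h^s,
\end{align*}
which is the assertion.

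\emph{Main difficulty.} Essentially all the analytic content is already contained in \Cref{sec:error-analysis}; the only points needing (routine) attention are the verification that \Cref{thm:L2} is unaffected by the modified stabilization \eqref{def:stab-eig} and the choice $\ell = k+2$, and the identification of $\hat T_h$ as a self-adjoint compact operator on $L^2(\Omega)$ so that the abstract perturbation result applies. I do not expect a genuine obstacle.
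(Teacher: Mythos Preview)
Your proof is correct and follows essentially the same decomposition as the paper: split $(T-\hat T_h)\phi$ into the projection error $(1-\Pi_\T^\ell)T\phi$ and the discrete error $\Pi_\T^\ell T\phi-\hat T_h\phi$, bound the first by $h^2\|\phi\|_\Omega$ via polynomial approximation and the stability $\|\nabla^2 T\phi\|_\Omega\lesssim\|\phi\|_\Omega$, and control the second with rate $h^s$. The only genuine difference is in this second step. You invoke \Cref{thm:L2} directly, which already contains an Aubin--Nitsche argument and delivers $\|\Pi_\T^\ell(u-u_\T)\|_\Omega\lesssim h^s\|\phi\|_\Omega$ in one stroke. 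The paper instead tests against an arbitrary $\psi\in L^2(\Omega)$, rewrites $(\Pi_\T^\ell T\phi-\hat T_h\phi,\psi)_\Omega=a_h(I_hT\phi-T_h\phi,T_h\psi)$ via the discrete equation, and then uses only the energy estimate \Cref{thm:a-priori} together with the elliptic regularity of $T\phi$ and the stability $\|T_h\psi\|_h\lesssim\|\psi\|_\Omega$. Your route is more economical (it reuses a result already on the shelf), while the paper's route avoids \Cref{thm:L2} altogether and keeps the argument at the level of the $\|\cdot\|_h$ norm; the $h^s$ factor then comes from the regularity of $T\phi$ rather than of a dual solution. Your handling of the eigenvalue part via self-adjointness and compactness of $\hat T_h$ is also correct and in fact more explicit than the paper, which simply asserts the conclusion.
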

\begin{proof}
	Given any $L^2$ functions $\phi, \psi \in L^2(\Omega)$, \eqref{def:Th}--\eqref{eq:T-hatT-relation} imply
	\begin{align*}
		(T\phi - \hat{T}_h\phi, \psi)_\Omega &= ((1 - \Pi_\T^\ell) T\phi, \psi)_\Omega + (\Pi_\T^\ell T\phi - \hat{T}_h \phi, \psi)_\Omega\\
		&= ((1 - \Pi_\T^\ell) T\phi, \psi)_\Omega + a_h(I_h T \phi - T_h \phi, T_h \psi)\\
		&\leq \|(1 - \Pi_\T^\ell) T\phi\|_\Omega\|\psi\|_\Omega + \|I_h T \phi - T_h \phi\|_h\|T_h \psi\|_h.
	\end{align*}
	\Cref{thm:a-priori} and the elliptic regularity of $T \phi \in H^{2+s}(\Omega)$ provide $\|I_h T \phi - T_h \phi\|_h \lesssim \mathrm{osc}(\phi, \T)+ \min_{p \in P_{k+2}(\T)} \|\nabla_\pw^2(T \phi - p)\|_\Omega \lesssim h^s\|\phi\|_\Omega$. Therefore,
	\begin{align*}
		\|(1 - \Pi_\T^\ell) T\phi\|_\Omega + \|I_h T \phi - T_h \phi\|_h \lesssim h^2\|\nabla^2 T\phi\|_\Omega + h^s\|\phi\|_\Omega \lesssim h^s\|\phi\|_\Omega
	\end{align*}
	from a Poincar\'e inequality and the stability $\|\nabla^2 T \phi\|_\Omega \lesssim \|\phi\|_\Omega$ of the solution operator $T$.
	A triangle inequality and \Cref{thm:a-priori} prove
	\begin{align*}
		\|\T_h \psi\|_h \leq \|T_h \psi - I_h T \psi\|_h + \|I_h T \psi\|_h \lesssim \mathrm{osc}(\psi, \T) + \|\nabla^2 T \psi\|_\Omega \lesssim \|\psi\|_{\Omega}.
	\end{align*}
	The combination of the three previously displayed formula concludes the proof.
\end{proof}
We mention that the convergence rates given in \Cref{thm:spectral-correctness} are suboptimal, but can be recovered using the arguments of \cite{CaloCicuttinDengErn2019}. However, the sole goal was to establish the spectral correctness of the method as a detailed error analysis (that lead to optimal convergence rates) is presented below.
Furthermore, the spectral correctness also applies to any $\ell \geq \max\{k-2,1\}$ (not just $\ell = k+2$).

\subsection{A~priori}
In the remaining parts of this section, we extend the error analysis of the source problem in \Cref{sec:error-analysis} to the eigenvalue problem \eqref{def:pde-var} using the arguments of \cite{CarstensenGallistlSchedensack2015,CarstensenGraessleTran2024}. For the sake of brevity, the analysis is only carried out for simple eigenvalues. However, we mention that an extension to eigenvalue clusters is possible following \cite{DaiHeZhou2015,Gallistl2015}. 

Let $\lambda = \lambda(j)$ be a simple eigenvalue with the associated eigenvector $u = u(j)$. For $1 \leq j \leq \dim(P_{\ell}(\T))$ (with $\ell = k+2$), the discrete problem \eqref{def:discrete-eig-problem} provides an approximation $\lambda_h = \lambda_h(j)$ of $\lambda$ with the discrete eigenvector $u_h = (u_\T, u_\F, \alpha_\F, u_\E) = u_h(j) \in V_h$.
The sign of the eigenvectors can be chosen so that $(u, u_\T)_\Omega \geq 0$.
Recall the index $2\geq s > 0$ of elliptic regularity from \eqref{ineq:elliptic-reg} and $t \coloneqq \min\{s,\ell-1\}$ from \Cref{thm:L2}.

\begin{theorem}[a~priori]\label{thm:a-priori-eig}
	Assume that the mesh size $h$ is sufficiently small. Then
	\begin{align*}
		|\lambda - \lambda_h| + \|I_h u - u_h\|_{h}^2 + h^{-2t} \| u - u_\T\|_\Omega^2\lesssim \min_{p \in P_{k+2}(\mathcal{T}_h)}\|\nabla_\pw^2(u - p)\|_\Omega^2.
	\end{align*}
\end{theorem}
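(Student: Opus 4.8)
The plan is to follow the standard Babu\v{s}ka–Osborn/Boffi-type approach for eigenvalue problems, adapted to the nonconforming HHO setting as in \cite{CarstensenGallistlSchedensack2015,CarstensenGraessleTran2024,Gallistl2015}. First I would establish the eigenvalue error identity. Using that $u_h = u_h(j)$ is normalized in $b_h$, i.e.\ $\|u_\T\|_\Omega = 1$, and that $u$ is normalized in $L^2(\Omega)$, one writes
\begin{align*}
	\lambda - \lambda_h = \|\nabla^2 u\|_\Omega^2 - a_h(u_h, u_h) = \|\nabla_\pw^2(u - R_h u_h)\|_\Omega^2 - |u_h|_{s_h}^2 - \lambda(1 - \|u_\T\|_\Omega^2) - 2(\nabla_\pw^2(u - R_h u_h), \nabla^2 u)_\Omega + \dots
\end{align*}
the precise algebraic manipulation (expanding $\|\nabla_\pw^2(u - R_h u_h)\|_\Omega^2$ and using $(\nabla^2 u, \nabla^2 v)_\Omega = \lambda(u,v)_\Omega$) yields the familiar identity
\[
	\lambda - \lambda_h = \|\nabla_\pw^2(u - R_h u_h)\|_\Omega^2 - |u_h|_{s_h}^2 - \lambda\|u - u_\T\|_\Omega^2 + 2\lambda(u - u_\T, u_\T)_\Omega - \dots,
\]
but the cleanest route is the classical one: for the \emph{normalized} pair one has $2\lambda - 2\lambda_h = \|\nabla_\pw^2 R_h(I_h u - u_h)\|_\Omega^2 + \text{(stabilization terms)} - \lambda_h\|u - u_\T\|_\Omega^2 + (\text{consistency terms})$ up to higher-order contributions. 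So the first key step is to derive such an identity, where every term on the right is either quadratic in the approximation error $\|\nabla_\pw^2(u - G_h u)\|_\Omega$, quadratic in $\|I_h u - u_h\|_h$, or bounded by $\|u - u_\T\|_\Omega^2$ (which will be absorbed).

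Second, I would bound the energy error $\|I_h u - u_h\|_h$. This is exactly the argument of \Cref{thm:a-priori} (proof in \Cref{sec:proof-thm-a-priori}), but now with $f$ replaced by the ``data'' $\lambda u$ and the discrete data $\lambda_h u_\T$. Mimicking \eqref{ineq:proof_a_priori}, one gets
\[
	\|\nabla_\pw^2 R_h e_h\|_\Omega^2 + \tfrac12(|e_h|_{s_h}^2 + |u_h|_{s_h}^2) = (\nabla^2 u, \nabla_\pw^2(R_h e_h - J_h e_h))_\Omega + \tfrac12|I_h u|_{s_h}^2 + \lambda(u, J_h e_h - e_\T)_\Omega - (\lambda - \lambda_h)(u_\T, e_\T)_\Omega + \lambda(u - u_\T, e_\T)_\Omega,
\]
where $e_h = I_h u - u_h$. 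The first three terms are handled exactly as in the source problem via the commuting property \eqref{eq:orth-R-J}, \Cref{lem:best-appr-stab}, and the $L^2$ orthogonality of $J_h e_h - e_\T$; the $(u - u_\T, e_\T)$ term is controlled by $\|u - u_\T\|_\Omega\|e_h\|_h$ and absorbed; the $(\lambda - \lambda_h)$ term is quadratically small once one knows $|\lambda - \lambda_h|$ is small (from \Cref{thm:spectral-correctness}, using that $h$ is sufficiently small and the eigenvalue is simple). This gives $\|I_h u - u_h\|_h \lesssim \min_p \|\nabla_\pw^2(u - p)\|_\Omega + \|u - u_\T\|_\Omega$.

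Third, I would bound the $L^2$ error $\|u - u_\T\|_\Omega$ by a duality argument exactly as in the proof of \Cref{thm:L2} (\Cref{sec:proof-thm-L2}). Solving the biharmonic dual problem $\Delta^2 z = u - u_\T$ (using $\Pi_\T^\ell J_h e_h = e_\T$, which holds since $\ell = k+2$), elliptic regularity \eqref{ineq:elliptic-reg} gives $z \in H^{2+s}$, and the same split as in \eqref{eq:proof-L2-split}--\eqref{eq:proof-L2-identity} yields $\|u - u_\T\|_\Omega \lesssim h^s(\|I_h u - u_h\|_h + |u_h|_{s_h} + \|\nabla_\pw^2(u - G_h u)\|_\Omega + \text{osc-type terms})$; here the relevant ``data oscillation'' of $\lambda u$ is of order $h^2\|u\|_\Omega$ and is dominated. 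Since $h^s \to 0$, one inserts the energy bound from Step 2 and absorbs the $\|u - u_\T\|_\Omega$ appearing on the right, obtaining $\|u - u_\T\|_\Omega \lesssim h^s \min_p\|\nabla_\pw^2(u - p)\|_\Omega$. Feeding this back into Step 2 removes the $\|u - u_\T\|_\Omega$ term there, and then into the eigenvalue identity of Step 1: the $\|\nabla_\pw^2 R_h e_h\|_\Omega^2$, $|u_h|_{s_h}^2$ terms are $\lesssim \min_p\|\nabla_\pw^2(u - p)\|_\Omega^2$, the term $\lambda\|u - u_\T\|_\Omega^2 \lesssim h^{2s}\min_p\|\nabla_\pw^2(u-p)\|_\Omega^2$ is even smaller, and $\|\nabla_\pw^2(u - R_h u_h)\|_\Omega \le \|\nabla_\pw^2(u - G_h u)\|_\Omega + \|\nabla_\pw^2 R_h e_h\|_\Omega$ closes the estimate.

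The main obstacle I anticipate is bookkeeping in Step 1: getting the eigenvalue error identity into a form where \emph{all} non-quadratic-in-error terms are genuinely absorbable requires care, in particular the cross term between the conformity/consistency error and $\nabla^2 u$, and the term involving $(\lambda - \lambda_h)$, which is only known to be $o(1)$ a priori (from \Cref{thm:spectral-correctness}) and must be shown to multiply something of size $\lesssim \|I_h u - u_h\|_h^2 + \|u-u_\T\|_\Omega^2$ so that it is absorbed rather than circular. The standard fix — as in \cite{CarstensenGallistlSchedensack2015,Gallistl2015} — is the identity $2(\lambda - \lambda_h) = \dots + (\lambda + \lambda_h)\|u - u_\T\|_\Omega^2 + \dots$ combined with a sign-normalization $(u, u_\T)_\Omega \ge 0$ and the elementary bound $\|u - u_\T\|_\Omega^2 = 2 - 2(u,u_\T)_\Omega \le 2(1 - (u,u_\T)_\Omega^2) \cdot (\dots)$; threading this through correctly is the delicate part, everything else being a direct transcription of the source-problem proofs.
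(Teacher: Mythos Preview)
Your overall strategy is close to the paper's, but there is a genuine gap in Step~3. The direct Aubin--Nitsche argument you propose for $\|u-u_\T\|_\Omega$ does not close: the source-problem duality of \Cref{sec:proof-thm-L2} relies on the continuous and discrete right-hand sides \emph{coinciding} (both equal to $f$), so that the residual term collapses to a data oscillation. In the eigenvalue problem the continuous data is $\lambda u$ while the discrete data is $\lambda_h u_\T$, and the mismatch produces, in the analogue of \eqref{eq:proof-L2-identity}, an extra term $(\lambda u-\lambda_h u_\T,\Pi_\T^\ell z)_\Omega$. Splitting this as $(\lambda-\lambda_h)(\Pi_\T^\ell u,z)_\Omega+\lambda_h(e_\T,z)_\Omega$ and using $(e_\T,z)_\Omega=\|\nabla^2 z\|_\Omega^2$ leaves a contribution $\lambda_h\|\nabla^2 z\|_\Omega^2$ on the right of the bound for $\|e_\T\|_\Omega^2$; since $\|\nabla^2 z\|_\Omega\le\lambda(1)^{-1/2}\|e_\T\|_\Omega$, the prefactor is $\lambda_h/\lambda(1)\approx\lambda(j)/\lambda(1)\ge 1$ and cannot be absorbed. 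No amount of ``$h$ sufficiently small'' helps --- the obstruction is spectral, not asymptotic, and your remark that ``the relevant data oscillation of $\lambda u$ is of order $h^2\|u\|_\Omega$'' overlooks precisely this term.

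The paper's remedy is to introduce an auxiliary \emph{discrete} source problem $\widetilde u_h\in V_h$ with right-hand side $\lambda u$ (so that \Cref{thm:L2} applies verbatim to give $\|\Pi_\T^\ell u-\widetilde u_\T\|_\Omega\lesssim h^s\min_p\|\nabla_\pw^2(u-p)\|_\Omega$) and then invoke the spectral-gap estimate \cite[Lemma~2.4]{CarstensenGallistlSchedensack2015}, which yields $\|u-u_\T\|_\Omega\le C\|u-\widetilde u_\T\|_\Omega$ with $C$ depending on $\max_{k\neq j}|\lambda/(\lambda-\lambda_h(k))|$. This is where the simplicity of $\lambda$ and \Cref{thm:spectral-correctness} actually enter, and it is the missing ingredient in your Step~3; you mention ``the standard fix as in \cite{CarstensenGallistlSchedensack2015,Gallistl2015}'' only in connection with the eigenvalue identity of Step~1, not the $L^2$ bound where it is needed. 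Once the $L^2$ estimate \eqref{ineq:L2-eig} is in hand, the paper also takes a cleaner route than your Step~2: rather than carrying the cross term $(\lambda-\lambda_h)(u_\T,e_\T)_\Omega$, it derives the exact identity $\lambda-\lambda_h+\|e_h\|_h^2=\lambda\|u-u_\T\|_\Omega^2+(\|I_hu\|_h^2-\lambda)+2(\lambda(u,u_\T)_\Omega-a_h(I_hu,u_h))$ directly from the normalizations, which sidesteps the circularity you flag.
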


\begin{proof}
	The proof departs from the auxiliary problem that seeks the discrete solution $\widetilde{u}_h = (\widetilde{u}_\T, \widetilde{u}_\F, \widetilde{\alpha}_\F, \widetilde{u}_\E) \in V_h$ to \eqref{def:discrete_problem} with the right-hand side $f = \lambda u$, i.e.,
	\begin{align*}
		a_h(\widetilde{u}_h, v_h) = \lambda(u, v_\T)_\Omega\quad\text{for any } v_h = (v_\T, v_\F, \beta_\F, v_\E) \in V_h.
	\end{align*}
	For sufficiently small mesh size $h$, \Cref{thm:spectral-correctness} shows that 
    $$C \coloneqq \sqrt{2}(1 + \max_{k \in \{1,\cdots,N\} \setminus \{j\}}|\lambda /(\lambda-\lambda_h(k))|) \lesssim 1$$
    is uniformly bounded. It is known from \cite[Lemma 2.4]{CarstensenGallistlSchedensack2015} that
	$\| u - u_\T\|_\Omega \leq C \|u - \widetilde{u}_\T\|_\Omega$.
	Hence, a triangle inequality, the $L^2$ estimate of \Cref{thm:L2}, and \eqref{eq:best-appr}--\eqref{ineq:v-Gv-Poincare} imply
	\begin{align*}
		&\|u - u_\T\|_\Omega \lesssim \|u - \widetilde{u}_\T\|_\Omega \leq \|(1 - \Pi_\T^\ell) u\|_\Omega + \|\Pi_\T^\ell u - \widetilde u_\T\|_\Omega\\
		&\leq \|(1 - G_h) u\|_\Omega + \|\Pi_\T^{\ell} u - \widetilde u_\T\|_\Omega \lesssim h^{t}\min_{p \in P_{k+2}(\T)} \|\nabla_\pw^2(u - p)\|_\Omega + h^{t}\mathrm{osc}(\lambda u, \T).
	\end{align*}
	Notice that, for $\ell = k+2$, $\mathrm{osc}(\lambda u, \T) \leq \|h_\T^2\lambda(u - G_h u)\|_\Omega \lesssim h^{4} \|\nabla_\pw^2(u - G_h u_h)\|_\Omega$ from \eqref{ineq:v-Gv-Poincare} and with \eqref{eq:best-appr},
	\begin{align}\label{ineq:L2-eig}
		\|u - u_\T\|_\Omega \lesssim h^{t}\min_{p \in P_{k+2}(\T)} \|\nabla_\pw^2(u - p)\|_\Omega.
	\end{align}
	With the abbreviation $e_h = I_h u - u_h$, it remains to establish
	\begin{align*}
		|\lambda - \lambda_h| + \|e_h\|_{h}^2 \lesssim \min_{p \in P_{k+2}(\T)} \|\nabla_\pw^2(u - p)\|_\Omega^2.
	\end{align*}
	Elementary algebra with the normalization $\|u\|_\Omega = 1= \|u_\T\|_\Omega$ leads to $2\lambda = \lambda\|u - u_\T\|_\Omega^2 + 2\lambda(u,u_\T)_{\Omega}$. This and $\|e_h\|_{h}^2 - \lambda_h = \|I_h u\|_{h}^2 - 2a_h(I_h u, u_h)$ from $\lambda_h = \|u_h\|_h^2$ show the split 
	\begin{align}\label{eq:proof-a-pr-eig-split}
		\lambda - \lambda_h + \|e_h\|_{h}^2 = \lambda\|u - u_\T\|_\Omega^2 + \|I_h u\|_{h}^2 - \lambda + 2(\lambda(u,u_\T)_{\Omega} - a_h(I_h u, u_h)).
	\end{align}
	The individual terms on the right-hand side are bounded as follows.
	\Cref{lem:commuting} proves $\lambda = \|\nabla^2 u\|_\Omega^2 \geq \|\nabla_\pw^2 G_h u\|_\Omega^2$ and so,
	\begin{align}\label{ineq:proof-a-pr-eig-1}
		\|I_h u\|_{h}^2 - \lambda = \|\nabla_\pw^2 G_h u\|_\Omega^2 + |I_h u|_{s_h}^2 - \|\nabla^2 u\|_\Omega^2 \leq |I_h u|_{s_h}^2.
	\end{align}
	The variational formulation \eqref{def:pde-var} proves
	\begin{align*}
		\lambda(u,u_\T)_{\Omega} &= \lambda(u, J_h u_h)_{\Omega} - \lambda((1 - \Pi_\T^{\ell})u, J_h u_h)_{\Omega}\\
		&= (\nabla^2 u, \nabla^2 J_h u_h)_{\Omega} - \lambda((1 - \Pi_\T^{\ell})u, J_h u_h)_{\Omega}.
	\end{align*}
	This and the identity $a_h(I_h u, u_h) = (\nabla_\pw^2 G_h u, \nabla^2 J_h u_h)_\Omega + s_h(I_hu,u_h)$ from \Cref{lem:commuting} provide
	\begin{align*}
		\lambda(u,u_\T)_{\Omega} - a_h(I_h u, u_h) &= (\nabla^2_\pw(u - G_h u), \nabla^2 J_h u_h)_\Omega\\
		&\quad - \lambda((1 - \Pi_\T^\ell)u, J_h u_h)_{\Omega} - s_h(I_h u, u_h).
	\end{align*}
	From \eqref{ineq:v-Gv-Poincare}--\eqref{ineq:v-Piv} and the orthogonality \eqref{eq:commuting-G}, we deduce 
	\begin{align*}
		(\nabla^2_\pw(u - G_h u), \nabla^2 J_h u_h)_\Omega
		- \lambda((1 - \Pi_\T^\ell)u, J_h u_h)_{\Omega}&\\
		\lesssim  \|\nabla^2_\pw(u - G_h u)\|_\Omega \|\nabla^2_\pw (J_h u_h - R_h u_h)\|_\Omega&.
	\end{align*}
	The combination of the two previously displayed formula results in
	\begin{align}\label{ineq:proof-a-priori-eig}
		\begin{aligned}
			&\lambda(u,u_\T)_{\Omega} - a_h(I_h u, u_h)\\
			&\qquad \lesssim \|\nabla^2_\pw(u - G_h u)\|_\Omega\|\nabla_\pw^2(J_h u_h - R_h u_h)\|_\Omega - s_h(I_h u, u_h).
		\end{aligned}
	\end{align}
	Recall the estimate $\|\nabla_\pw^2(J_h - R_h) I_h u\|_\Omega \lesssim \|\nabla_\pw^2(u - G_h u)\|_\Omega$ from \eqref{ineq:quasi-best-J-R}.
	The triangle inequality and continuity of $J_h$ in \Cref{lem:right-inverse} imply
	\begin{align*}
		\|\nabla_\pw^2(J_h u_h - R_h u_h)\|_\Omega &\leq \|\nabla^2 J_h e_h\|_\Omega + \|\nabla_\pw^2 R_h e_h\|_\Omega + \|\nabla_\pw^2 (J_h - R_h) I_h u\|_\Omega\\
		& \lesssim \|e_h\|_h + \|\nabla_\pw^2(u - G_h u)\|_\Omega.
	\end{align*}
	This, \eqref{ineq:proof-a-priori-eig}, $-s_h(I_h u, u_h) \leq |I_h u|^2_{s_h} + |I_h u|_{s_h}|e_h|_{s_h}$, and \Cref{lem:best-appr-stab} provide
	\begin{align*}
		\lambda(u,u_\T)_{\Omega} - a_h(I_h u, u_h) \lesssim \|\nabla^2_\pw(u - G_h u)\|_\Omega(\|e_h\|_h + \|\nabla_\pw^2(u - G_h u)\|_\Omega).
	\end{align*}
	In combination with \eqref{ineq:L2-eig}--\eqref{ineq:proof-a-pr-eig-1} and \Cref{lem:best-appr-stab}, we conclude
	\begin{align*}
		\lambda - \lambda_h + \|e_h\|_h^2 \lesssim \|\nabla_\pw^2(u - G_h u)\|_\Omega^2.
	\end{align*}
	If $\sigma$ is chosen sufficiently small so that $\lambda_h \leq \lambda$ for sufficiently small mesh sizes $h$ from \Cref{rem:direct-LEB}, then the proof is complete. Otherwise, we need to bound of $\lambda_h - \lambda + \|e_h\|_h^2$. Elementary algebra with $\|u\|_\Omega = 1 = \|u_\T\|_\Omega$ and \eqref{def:discrete-eig-problem} lead to $2 \lambda_h = \lambda_h\|u - u_\T\|^2_\Omega + 2\lambda_h(u, u_\T)_\Omega = \lambda_h\|u - u_\T\|^2_\Omega + 2a_h(u_h, I_h u)$. This, $\lambda = \|\nabla^2 u\|_\Omega^2 \geq \|\nabla_\pw^2 G_h u\|_\Omega^2$ from \eqref{eq:commuting-G}, and $\lambda_h = \|u_h\|_h^2$ yield
	\begin{align*}
		\lambda_h - \lambda + \|e_h\|_h^2 &= 
		\lambda_h\|u - u_\T\|^2_\Omega - \lambda - \lambda_h + 2a_h(u_h, I_h u) + \|e_h\|_h^2\\
		&\leq
		\lambda_h\|u - u_\T\|^2_\Omega - \|\nabla_\pw^2 G_h u\|_\Omega^2 - \|u_h\|^2_h + 2a_h(u_h, I_h u) + \|e_h\|_h^2\\
		&\leq \lambda_h\|u - u_\T\|^2_\Omega + |I_h u|_{s_h}^2,
	\end{align*}
	where $\|e_h\|_h^2 = \|u_h\|_h^2 + \|I_h u\|^2_h - 2a_h(u_h, I_h u)$ is utilized in the last step.
	The assertion $\lambda_h - \lambda + \|e_h\|_h^2 \lesssim \|\nabla_\pw^2(u - G_h u)\|^2_\Omega$ follows from \eqref{ineq:L2-eig} and \Cref{lem:best-appr-stab}.
\end{proof}

\subsection{A~posteriori}
The error control of \Cref{thm:a-posteriori} motivates the error estimator
\begin{align*}
	\widehat{\eta}^2 \coloneqq \mu^2 + |u_h|_{s_h}^2
\end{align*}
for the biharmonic eigenvalue problem, where $\mu$ is defined in \eqref{def:mu}.
\begin{theorem}[a~posteriori]\label{thm:a-posteriori-eig}
	For sufficiently small mesh sizes $h$, it holds
	\begin{align*}
		\|\nabla_\pw^2(u - R_h u_h)\|_\Omega \approx \widehat{\eta}.
	\end{align*}
\end{theorem}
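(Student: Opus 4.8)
The plan is to establish the two bounds $\|\nabla_\pw^2(u - R_h u_h)\|_\Omega \lesssim \widehat\eta$ and $\widehat\eta \lesssim \|\nabla_\pw^2(u - R_h u_h)\|_\Omega$ separately, both under the smallness hypothesis on $h$ that is already required by \Cref{thm:a-priori-eig}. The guiding observation is that $u$ solves the biharmonic source problem with right-hand side $\lambda u$, whereas $u_h$ solves the discrete eigenproblem \eqref{def:discrete-eig-problem} with the algebraic right-hand side $\lambda_h u_\T$; the discrepancy between $\lambda u$ and $\lambda_h u_\T$ is of higher order by the $L^2$ and eigenvalue error estimates of \Cref{thm:a-priori-eig} and will be absorbed into the left-hand side for $h$ small. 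Since $\ell = k+2 \geq 1$ throughout this section, only the first branch of the source-problem estimator is relevant, and the data oscillation $\mathrm{osc}(\lambda u, \T) \lesssim h^4\|\nabla_\pw^2(u - G_h u)\|_\Omega$ (by \eqref{ineq:v-Piv}) is itself higher order, which explains why it does not appear in $\widehat\eta$.

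For \emph{reliability}, I would abbreviate $e \coloneqq u - R_h u_h$ as in the proof of \Cref{thm:a-posteriori} and start from the error decomposition \eqref{eq:proof-a-post-split}. The conformity term is controlled by $\mu$ exactly as in \eqref{ineq:proof-a-post-conformity-error} via the averaging operator $\mathcal{A}_h$. For the consistency term, given $v \in V$, I would use \Cref{lem:commuting} together with \eqref{def:eigenvalue-problem}, \eqref{def:discrete-eig-problem}, and \eqref{eq:commuting-G} to derive the identity
\begin{align*}
	(\nabla_\pw^2 e, \nabla^2 v)_\Omega = \lambda(u,v)_\Omega - \lambda_h(u_\T, \Pi_\T^\ell v)_\Omega + s_h(u_h, I_h v),
\end{align*}
the analogue of \eqref{eq:proof-a-post-case-2}. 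The stabilization contribution is $\lesssim |u_h|_{s_h}\|\nabla^2 v\|_\Omega$ by \Cref{lem:best-appr-stab-eig}. I would then split $\lambda(u,v)_\Omega - \lambda_h(u_\T, \Pi_\T^\ell v)_\Omega = \lambda((1-\Pi_\T^\ell)u,(1-\Pi_\T^\ell)v)_\Omega + (\lambda u - \lambda_h u_\T, \Pi_\T^\ell v)_\Omega$; the first summand is $\lesssim h^4\|\nabla_\pw^2(u-G_h u)\|_\Omega\|\nabla^2 v\|_\Omega$ by \eqref{ineq:v-Piv} and the $L^2$ error of $\Pi_\T^1$, and the second is $\lesssim (\lambda\|u-u_\T\|_\Omega + |\lambda - \lambda_h|)\|\nabla^2 v\|_\Omega$ after a Friedrichs inequality, hence $\lesssim \varepsilon(h)\|\nabla_\pw^2 e\|_\Omega\|\nabla^2 v\|_\Omega$ with $\varepsilon(h) \to 0$, using \Cref{thm:a-priori-eig} and $\min_{p \in P_{k+2}(\T)}\|\nabla_\pw^2(u-p)\|_\Omega \leq \|\nabla_\pw^2 e\|_\Omega$. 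Collecting these bounds in \eqref{eq:proof-a-post-split} gives $\|\nabla_\pw^2 e\|_\Omega^2 \lesssim \widehat\eta^2 + \varepsilon(h)^2\|\nabla_\pw^2 e\|_\Omega^2$, and the last term is absorbed for $h$ small.

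For \emph{efficiency}, I would bound $|u_h|_{s_h} \leq |I_h u|_{s_h} + \|I_h u - u_h\|_h \lesssim \|\nabla_\pw^2 e\|_\Omega$ from \Cref{lem:best-appr-stab-eig} and \Cref{thm:a-priori-eig}, again using $\min_{p \in P_{k+2}(\T)}\|\nabla_\pw^2(u-p)\|_\Omega \leq \|\nabla_\pw^2 e\|_\Omega$. For $\mu$ I would reuse the efficiency argument of \Cref{thm:a-posteriori}: with $e_h \coloneqq I_h u - u_h$, \eqref{ineq:jump_R}--\eqref{ineq:normal_jump_R} give $\sum_{F\in\F}(h_F^{-3}\|[R_h e_h]_F\|_F^2 + h_F^{-1}\|[\partial_n R_h e_h]_F\|_F^2) \lesssim \|e_h\|_h^2$, and since $[u]_F = \partial_n[u]_F = 0$ along every $F \in \F$, the triangle and trace inequalities with \eqref{ineq:v-Gv-Poincare} yield $\mu \lesssim \|e_h\|_h + \|\nabla_\pw^2(u-G_h u)\|_\Omega \lesssim \|\nabla_\pw^2 e\|_\Omega$ by \Cref{thm:a-priori-eig}. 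The combination proves $\widehat\eta \lesssim \|\nabla_\pw^2 e\|_\Omega$.

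The main obstacle I anticipate is the mismatch between $\lambda u$ and $\lambda_h u_\T$: the quantity $\|\lambda u - \lambda_h u_\T\|_\Omega$ must be shown to be higher order through the $L^2$ error estimate and the eigenvalue convergence in \Cref{thm:a-priori-eig}, and the resulting contribution to the consistency error then has to be absorbed into $\|\nabla_\pw^2 e\|_\Omega$ under the smallness hypothesis on $h$ — which is exactly why the statement requires $h$ sufficiently small. One also has to observe that the eigenvalue stabilization \eqref{def:stab-eig} is equivalent to \eqref{def:stab_loc} up to mesh-independent constants, so that \eqref{eq:proof-a-post-split}, \eqref{ineq:proof-a-post-conformity-error}, and \eqref{ineq:jump_R}--\eqref{ineq:normal_jump_R} borrowed from the proofs of \Cref{thm:a-priori}--\ref{thm:a-posteriori} remain applicable verbatim.
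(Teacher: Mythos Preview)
Your proposal is correct, but it follows a different route than the paper. The paper introduces an auxiliary \emph{continuous} source problem: let $\widetilde u \in V$ solve $(\nabla^2\widetilde u,\nabla^2 v)_\Omega = \lambda_h(u_\T,v)_\Omega$ for all $v\in V$. Since $u_h$ is then precisely the HHO solution of this source problem with right-hand side $\lambda_h u_\T \in P_{k+2}(\T)$ (so $\mathrm{osc}(\lambda_h u_\T,\T)=0$), \Cref{thm:a-posteriori} applies as a black box and gives $\|\nabla_\pw^2(\widetilde u - R_h u_h)\|_\Omega \approx \widehat\eta$. Both reliability and efficiency then follow from a single triangle inequality and the stability bound $\|\nabla^2(u-\widetilde u)\|_\Omega \lesssim \|\lambda u - \lambda_h u_\T\|_\Omega$, the latter being of higher order $h^s\|\nabla_\pw^2(u-G_hu)\|_\Omega$ by \eqref{ineq:L2-eig} (the paper cites \cite[Ineq.~(6.7)]{CarstensenGraessleTran2024}).

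You instead reopen the proof of \Cref{thm:a-posteriori} and adapt the consistency-error identity \eqref{eq:proof-a-post-case-2} directly to the eigenvalue setting, and for efficiency you bound $|u_h|_{s_h}$ and $\mu$ via \Cref{thm:a-priori-eig} without passing through $\widetilde u$. This is entirely valid and arguably more transparent about where each term comes from; the paper's approach is shorter and more modular because it reuses \Cref{thm:a-posteriori} wholesale rather than retracing its steps. Both arguments hinge on the same higher-order quantity $\|\lambda u - \lambda_h u_\T\|_\Omega$ and absorb it for small $h$.
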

\begin{proof}
	The proof departs from the auxiliary problem that seeks $\widetilde{u} \in V$ such that
	\begin{align*}
		(\nabla^2 \widetilde{u}, \nabla^2 v)_\Omega = \lambda_h(u_\T, v)_\Omega \quad\text{for any } v \in V.
	\end{align*}
	In other words, $\widetilde{u}$ is the unique solution to the biharmonic problem with right-hand side $\lambda_h u_\T$ and so, the a posteriori error control from \Cref{thm:a-posteriori} provides
	\begin{align*}
		\|\nabla^2_\pw(\widetilde{u} - R_h u_h)\|_\Omega \lesssim \widehat{\eta},
	\end{align*}
	where the data oscillation $\mathrm{osc}(\lambda_h u_\T, \T) = 0$ vanishes.
	The error between $u$ and $\widetilde{u}$ is bounded by the error of the right-hand side
	\begin{align}\label{ineq:stability-eig}
		\|\nabla^2(u - \widetilde{u})\|_\Omega \lesssim \|\lambda u - \lambda_h u_\T\|_\Omega.
	\end{align}
	Hence, the triangle inequality implies
	\begin{align}\label{ineq:proof-a-post-eig}
		\|\nabla^2_\pw(u - R_h u_h)\|_\Omega \lesssim \widehat{\eta} + \|\lambda u - \lambda_h u_\T\|_\Omega.
	\end{align}
	The inequality (6.7) of \cite{CarstensenGraessleTran2024} applies and provides
	\begin{align}\label{ineq:proof-eig-hot}
		\|\lambda u - \lambda_h u_\T\|_\Omega \lesssim h^s\|\nabla_\pw^2(u - G_h u)\|_\Omega
	\end{align}
	from \eqref{ineq:L2-eig}.
	Hence, for sufficient small mesh sizes $h$, the reliability $\|\nabla^2_\pw(u - R_h u_h)\|_\Omega \lesssim \widehat{\eta}$ follows from \eqref{ineq:proof-a-post-eig}.
	Recall that $\widehat{\eta}$ is efficient in the sense
	\begin{align*}
		\widehat{\eta} \lesssim \|\nabla_\pw^2(\widetilde{u} - R_h u_h)\|_\Omega.
	\end{align*}
	This, a triangle inequality, \eqref{ineq:stability-eig}, and \eqref{ineq:proof-eig-hot} conclude the efficiency
	\begin{align*}
		\widehat{\eta} &\lesssim \|\nabla_\pw^2(u - R_h u_h)\|_\Omega + \|\nabla^2(u - \widetilde{u})\|_\Omega\\
		& \lesssim \|\nabla_\pw^2(u - R_h u_h)\|_\Omega + \|\lambda u - \lambda_h u_\T\|_\Omega \lesssim \|\nabla_\pw^2(u - R_h u_h)\|_\Omega.\qedhere
	\end{align*}
\end{proof}

\section{Numerical examples}\label{sec:num_examples}
This section presents 2D benchmarks for the source and eigenvalue problem in the L-shaped domain $\Omega \coloneqq (-1,1)^2 \setminus [0,1]\times[-1,0]$ with a initial triangulation consisting of six triangles.

\subsection{Biharmonic problem}\label{sec:num_ex_bihar}
We set $\ell = k+2$ and note that the volume variables can be statically condensed.

\begin{figure}[ht]
    \centering
    \includegraphics[width=0.6\linewidth]{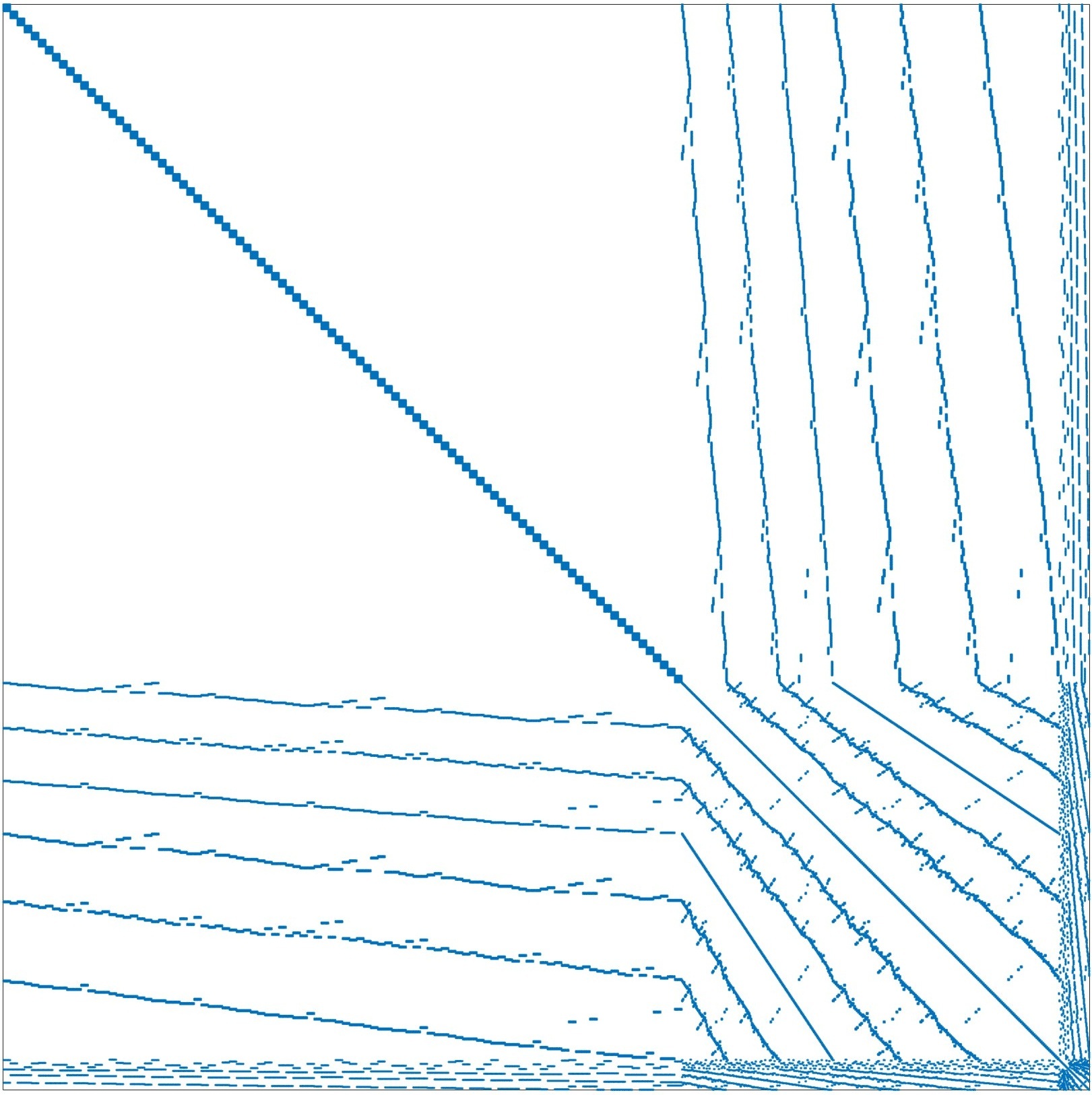}
    \caption{Sparsity pattern of stiffness matrix for $k = 2$ on uniform mesh with 2305 degrees of freedom}
    \label{fig:sparsity-pattern}
\end{figure}

\begin{figure}[ht]
	\begin{minipage}{0.68\textwidth}
		\centering
		\includegraphics[height=7cm]{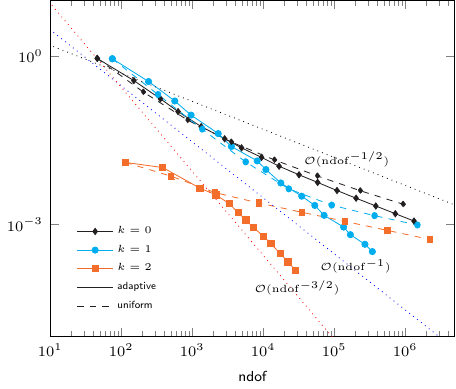}
	\end{minipage}\hfill
	\begin{minipage}{0.3\textwidth}
		\centering
		\includegraphics[height=3.5cm]{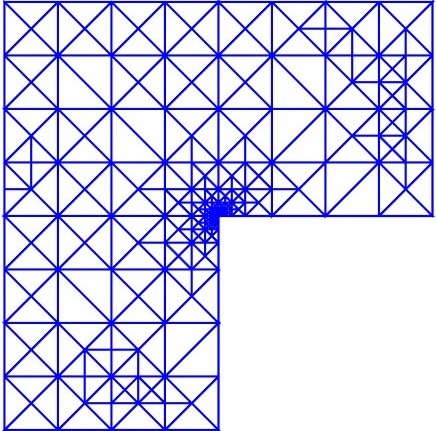}
	\end{minipage}
	\captionsetup{width=1\linewidth}
	\caption{Convergence history plot of $\eta$ (left) and adaptive triangulation with 445 triangles (right) for the experiment in \Cref{sec:num_ex_bihar}}
	\label{fig:biharm}
\end{figure}

\subsubsection{Adaptive algorithm}
The a~posteriori error control from \Cref{thm:a-posteriori} motivates the local refinement indicator
\begin{align*}
	\eta^2(T) \coloneqq \sum_{F \in \F(T)} (h_F^{-3} \|[R_h u_h]_F\|_F^2 + h_F^{-1} \|[\partial_n R_h u_h]_F\|_F^2)&\\
	+ s_T(u_h,u_h) + h_T^4\|(1 - \Pi_T^\ell) f\|_T^2&,
\end{align*}
which is utilized in a standard adaptive loop with the D\"orfler marking strategy and bulk parameter $\vartheta = 1/2$ \cite{Doerfler1996}, i.e., at each refinement step, we mark a subset $\mathcal{M} \subset \T$ of minimal cardinality such that
\begin{align*}
	\eta^2 \leq \frac{1}{2}\sum_{T \in \mathcal{M}} \eta^2(T).
\end{align*}
The convergence history plots display the quantities of interest against the number of degrees of freedom $\mathrm{ndof}$ in a log-log plot for uniform and adaptive computations. (Recall the scaling $\mathrm{ndof} \approx h^2$ for uniform meshes.)

\subsubsection{Numerical benchmark}
We approximate the unknown solution $u$ to the source problem \eqref{def:source-problem} with the constant right-hand side $f \equiv 1$ in $\Omega$, which leads to a vanishing data oscillation $\mathrm{osc}(f,\T) = 0$.

In \Cref{fig:sparsity-pattern}, the sparsity pattern of the stiffness matrix for $k = 2$ on a uniform mesh is displayed. The degrees of freedom are ordered as follows: First, we have the degrees of freedom associated with the mesh elements, followed by edge degrees of freedom, and finally one degree of freedom for each vertex of the triangulation. It can be observed that the upper left part of the stiffness matrix can be statically condensed by Schur complement. Furthermore, \Cref{fig:sparsity-pattern} shows that the lower part of the stiffness matrix is more dense. This is expected since vertex degrees of freedom commute with the whole vertex patch. However, there are significantly less vertex than edge degrees of freedom.

The convergence history of the error estimator $\eta$ from \Cref{thm:a-posteriori} is displayed in \Cref{fig:biharm} (left) with suboptimal convergence rates on uniformly refined meshes. The adaptive algorithm refines towards the expected singularity at the origin as displayed in \Cref{fig:biharm} (right) and recovers the optimal convergence rates $(k+1)/2$ for all displayed polynomial degrees $k = 0, 1, 2$.

\begin{figure}[ht]
	\begin{minipage}{0.68\textwidth}
		\centering
		\includegraphics[height=7cm]{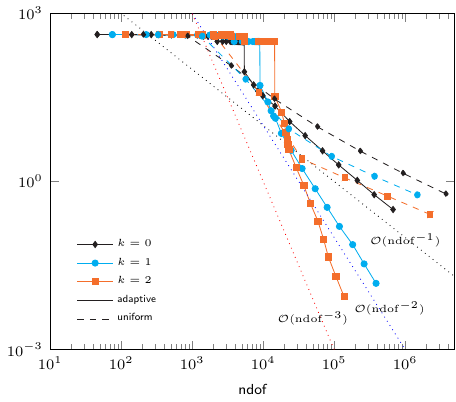}
	\end{minipage}\hfill
	\begin{minipage}{0.3\textwidth}
		\centering
		\includegraphics[height=3.5cm]{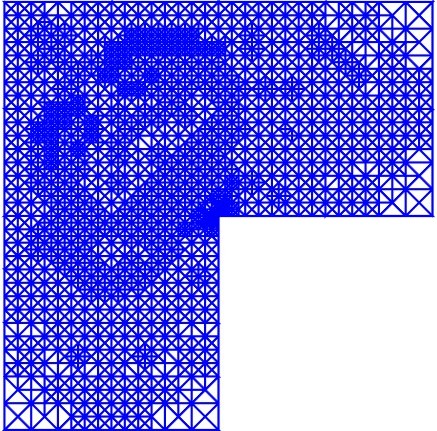}
	\end{minipage}
	\captionsetup{width=1\linewidth}
	\caption{Convergence history plot of $\lambda(1) - \mathrm{LEB}(1)$ (left) and adaptive triangulation with 4610 triangles (right) for the experiment in \Cref{sec:num_ex_eig}}
	\label{fig:biharm_eig}
\end{figure}

\subsection{Biharmonic eigenvalue problem}\label{sec:num_ex_eig}
In this subsection, we are interested in the computation of LEB for the biharmonic eigenvalue problem.
Recall the constants $\alpha = \sigma(1/\pi^4 + c_\mathrm{tr}/\pi^2 + c_\mathrm{tr} + c_\mathrm{tr}(2/\pi + n/\pi^2)) = 1.8355 \times \sigma$ and $\beta \coloneqq h^4/\pi^4$ from \Cref{thm:LEB}. To obtain LEB with higher-order convergence rates, $\sigma$ needs to be chosen such that $\alpha < 1$. In the benchmark below, we chose $\sigma = 0.4086$ ($\alpha = 3/4$).

\subsubsection{Adaptive algorithm}
Whenever $\alpha + \beta \lambda_h(j) > 1$, the mesh is refined uniformly.
Otherwise,
the refinement indicator
\begin{align*}
	\widehat{\eta}^2(T) \coloneqq \sum_{F \in \F(T)} (h_F^{-3} \|[R_h u_h]_F\|_F^2 + h_F^{-1} \|[\partial_n R_h u_h]_F\|_F^2)
	+ s_T(u_h,u_h)
\end{align*}
motivated from \Cref{thm:a-posteriori-eig}
is utilized in a standard adaptive loop with the D\"orfler marking strategy and bulk parameter $\vartheta = 1/2$ as above.

\subsubsection{Numerical benchmark}
We approximate the first eigenvalue $\lambda(1)$ of \eqref{def:pde-var} with the reference value $\lambda(1) = 418.9735$ from an adaptive computation with $k = 3$.
The convergence history of $\lambda(1) - \mathrm{LEB}(1)$ with $\mathrm{LEB}(1)$ from \Cref{thm:LEB} is displayed in \Cref{fig:biharm_eig} (left) with suboptimal convergence rates on uniformly refined meshes. The adaptive algorithm refines towards the expected singularity at the origin as displayed in \Cref{fig:biharm_eig} (right) and recovers the optimal convergence rates $k+1$ for all displayed polynomial degrees $k = 0, 1, 2$. We note that the preasymptotic regime observed in \Cref{fig:biharm_eig} (left)
arises from a possibly large overestimation of the constant in \Cref{lem:best-appr-stab-eig} leading to a small $\sigma$.
A remedy is the computation of accurate embedding constants as in \cite{Tran2024x2} using the hybrid high-order methodology of this paper.

\subsection{Conclusion}
In all examples, adaptive computation empirically recovers optimal convergence rates of the error quantities of interest. The analysis of this paper can be extended to more general (such as mixed or clamped) boundary conditions.

\appendix
\section{Unisolvence of the DOFs of $C^1$-element}\label{appendix}
In this section, we recall the degrees of freedom of $C^1$ conforming finite element in 2D \cite{Ciarlet2002} and 3D \cite{Zenisek1973,Zhang2009}. Since they are not suitable for the purpose of constructing the right-inverse $J_h$ of \Cref{sec:right-inverse} as point evaluations are involved, a modified set of degrees of freedom is provided below. Since the arguments to establish its unisolvence from \cite{Ciarlet2002,Zhang2009} carry over, we only provide an outline of the proofs.
Let $T$ denote a $n$ dimensional simplex with the vertices $x_j$ and associated nodal basis function $\Lambda_j$, $j = 1, \dots, n+1$.

\subsection{$C^1$- finite element in 2D}\label{appendix-1}
The $C^1$- finite element in 2D is known as the Argyris element (cf. \cite[Section 6]{Ciarlet2002}). For each $u\in P_k(T)$ with $k\geq 5$, the local degrees of freedom are given by:
\begin{enumerate}
	\item $u(x)$, $\nabla u(x)$, $\nabla^2u(x)$ at each vertex $x$ of $T$,
	\item $\int_e u q \ds$ for any $q\in P_{k-6}(e)$ and edge $e$ of $T$ (if $k=5$, this term vanishes),
	\item $\int_e \tfrac{\partial u}{\partial n_e} q \ds$ for any $q \in P_{k-5}(e)$ and edge $e$ of $T$ with unit normal vector $n_e$,
	\item $\int_T u q \dx$ for each $q\in P_{k-6}(T)$ (if $k=5$, this term vanishes).
\end{enumerate}
\begin{theorem}[$C^1$- finite element in 2D]\label{thm:C1_2D}
	Any $u\in P_k(T)$ with $k\geq 5$ is uniquely determined by the degrees of freedom from above.
\end{theorem}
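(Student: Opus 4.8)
The plan is to follow the classical unisolvence argument for the Argyris element. Since the degrees of freedom (1)--(4) listed above are exactly the standard ones, it suffices to (a) check that their number equals $\dim P_k(T) = (k+1)(k+2)/2$ and (b) show that a polynomial $u \in P_k(T)$ annihilated by all of them vanishes identically.

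\emph{Step 1 (counting).} I would count: $3 \times 6 = 18$ vertex values (one value, two first derivatives, three second derivatives at each of the three vertices); $3 \dim P_{k-6}(e) = 3(k-5)$ tangential edge moments of $u$; $3 \dim P_{k-5}(e) = 3(k-4)$ edge moments of $\partial_n u$; and $\dim P_{k-6}(T) = (k-4)(k-5)/2$ interior moments. Summing gives $(k+1)(k+2)/2$, so injectivity will conclude the proof.

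\emph{Step 2 (vanishing on the skeleton).} Assume all degrees of freedom of $u$ vanish. Fix an edge $e$ with endpoints $a,b$ and barycentric coordinates $\lambda_a, \lambda_b$ along $e$. From $u(a)=u(b)=0$ and the vanishing of $\nabla u$ and $\nabla^2 u$ at $a$ and $b$, the one-variable polynomial $u|_e \in P_k(e)$ vanishes together with its first and second derivatives at both endpoints, hence $u|_e = \lambda_a^3 \lambda_b^3 p$ with $p \in P_{k-6}(e)$. Testing $\int_e u q \ds = 0$ with $q = p$ and using $\lambda_a^3\lambda_b^3 \geq 0$ on $e$ forces $p \equiv 0$, so $u|_e \equiv 0$. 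Likewise $\partial_n u|_e \in P_{k-1}(e)$ vanishes to second order at $a$ and $b$ (the tangential derivative $\partial_t \partial_n u = n^\top \nabla^2 u\, t$ is a linear combination of the Hessian entries), hence $\partial_n u|_e = \lambda_a^2 \lambda_b^2 \widetilde p$ with $\widetilde p \in P_{k-5}(e)$, and testing $\int_e (\partial_n u) q \ds = 0$ with $q = \widetilde p$ yields $\partial_n u|_e \equiv 0$. Thus $u$ and $\partial_n u$ vanish on all of $\partial T$.

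\emph{Step 3 (interior bubble and conclusion).} Writing $u = \lambda_1\lambda_2\lambda_3 v$ with $v \in P_{k-3}(T)$, evaluate $\partial_n u$ on the edge $\{\lambda_i = 0\}$: since $\partial_n \lambda_i$ is a nonzero constant there, the vanishing of $\partial_n u$ on that edge forces $v$ to vanish on it, and hence on all of $\partial T$. Therefore $v = \lambda_1\lambda_2\lambda_3 w$ with $w \in P_{k-6}(T)$, i.e., $u = (\lambda_1\lambda_2\lambda_3)^2 w$. Testing $\int_T u q \dx = 0$ with $q = w$ and using positivity of $(\lambda_1\lambda_2\lambda_3)^2$ in the interior gives $w \equiv 0$, so $u \equiv 0$, and the claim follows. \emph{Expected main obstacle:} there is none of substance; the only care needed is in the divisibility claims (simultaneous vanishing of a trace and its normal derivative on $\partial T$ produces two factors of the cubic bubble) and in the positivity arguments converting vanishing $L^2$ moments against squares into pointwise vanishing, exactly as in \cite{Ciarlet2002}.
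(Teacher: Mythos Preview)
Your proof is correct and follows essentially the same route as the paper: count degrees of freedom, use the vertex data to factor $u|_e$ and $\partial_n u|_e$ by powers of the edge barycentrics and kill them via the edge moments, then factor out $(\lambda_1\lambda_2\lambda_3)^2$ and use the interior moments. Your Step~3 spells out the double divisibility by the cubic bubble in two stages where the paper simply asserts it, but this is only a difference in level of detail, not in strategy.
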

\begin{proof}
	The number of degrees of freedom is
	\begin{align*}
		6\times 3 + (2k-9)\times 3 + \tfrac{(k-5)(k-4)}{2} = \tfrac{(k+1)(k+2)}{2}=\operatorname{dim}P_{k}(T)
	\end{align*}
	It suffices to prove that $u =0$ if it vanishes at all the degrees of freedom.
	From the first set of degrees of freedom, the derivatives of $u$ of order $\leq 2$ vanish at all the vertices, which indicates that $u|_e = (\lambda_j\lambda_k)^3p$ for some $p\in P_{k-6}(e)$ on an edge $e = \mathrm{conv}\{x_j,x_k\}$. Then the second set of degrees of freedom implies that $u|_e = 0$. Similarly, it holds that $\tfrac{\partial u}{\partial n_e}|_e = (\lambda_j\lambda_k)^2w$ for some $w\in P_{k-5}(e)$ and the third set of degrees of freedom provides $\tfrac{\partial u}{\partial n_e}|_e =0$. In summary, $u$ and $\nabla u$ vanishes on the boundary of $T$ and so, $u = (\lambda_1\lambda_2\lambda_3)^2v$ for some $v\in P_{k-6}(T)$. The fourth set of degrees of freedom concludes $u =0$.
\end{proof}
\subsection{$C^1$- finite element in 3D}\label{appendix-2}
For each $u\in P_k(T)$ with $k\geq 9$, the local degrees of freedom are given by:
\begin{enumerate}
	\item $D^\alpha(x)$ at each vertex $x$ of $T$ with $|\alpha|\leq 4$,
	\item $\int_e u q \dt$ for any $q\in P_{k-10}(e)$ and edge $e$ of $T$ (if $k=9$, this term vanishes),
	\item $\int_e \tfrac{\partial u}{\partial n_{e}^1}q \dt$, $\int_e\tfrac{\partial u}{\partial n_{e}^2}q \dt$ for any $q\in P_{k-9}(e)$ and edge $e$ of $T$ with two independent unit normal vectors $n_e^1,n_e^2$,
	\item $\int_e\tfrac{\partial^2 u}{\partial n_{e}^1\partial n_{e}^1}q \dt$, $\int_e\tfrac{\partial^2 u}{\partial n_{e}^1\partial n_{e}^2}q \dt$, $\int_e\tfrac{\partial^2 u}{\partial n_{e}^2\partial n_{e}^2} q \dt$ for any $q\in P_{k-8}(e)$ and each edge $e$ of $T$,
	\item $\int_f uq \ds$ for any $q\in P_{k-9}(f)$ and face $f$ of $\partial T$,
	\item $\int_f \tfrac{\partial u}{\partial n_f} q \ds$ for any $q\in P_{k-7}(f)$ and face $f$ of $T$ with unit normal vector $n_f$,
	\item $\int_Tuq \dx$ for each $q\in P_{k-8}(T)$.
\end{enumerate}
\begin{theorem}[$C^1$- finite element in 3D]\label{thm:C1_3D}
	Any $u\in P_k(T)$ with $k\geq 9$ is uniquely determined by the degrees of freedom from above.
\end{theorem}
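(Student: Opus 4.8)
The plan is to mirror the unisolvence proof of \Cref{thm:C1_2D}. First I would verify that the number of degrees of freedom in (1)--(7) equals $\dim P_k(T)=\binom{k+3}{3}$. Counting multi-indices with $|\alpha|\le 4$ at the four vertices, moments on the six edges and four faces, and interior moments, the total is
\[
4\binom{7}{3}+6(k-9)+12(k-8)+18(k-7)+2(k-7)(k-8)+2(k-5)(k-6)+\binom{k-5}{3},
\]
and checking that this reduces to $\binom{k+3}{3}$ is a routine polynomial identity in $k$ (with the convention $P_{-1}=\{0\}$ covering $k=9$). Since the evaluation map then goes between spaces of equal dimension, it suffices to show that any $u\in P_k(T)$ annihilated by all the degrees of freedom vanishes identically.

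\emph{Edge step.} The vertex degrees of freedom (1) make every derivative of $u$ of order $\le 4$ vanish at the four vertices, so along an edge $e=\mathrm{conv}\{x_j,x_l\}$ the trace $u|_e$ is divisible by $(\Lambda_j\Lambda_l)^5$; writing $u|_e=(\Lambda_j\Lambda_l)^5 p$ with $p\in P_{k-10}(e)$ and testing the moments (2) against $q=p$, the positivity of $(\Lambda_j\Lambda_l)^5$ on the relative interior of $e$ forces $p=0$, hence $u|_e=0$. Applying the same argument to the two first normal derivatives $\partial_{n_e^i}u$ (whose derivatives of order $\le 3$ vanish at the endpoints, so that $\partial_{n_e^i}u|_e$ is divisible by $(\Lambda_j\Lambda_l)^4$) together with (3) gives $\partial_{n_e^1}u|_e=\partial_{n_e^2}u|_e=0$, and then (4) gives $\partial^2_{n_e^an_e^b}u|_e=0$ for $a,b\in\{1,2\}$. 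Differentiating the established identities $u|_e=0$ and $\partial_{n_e^i}u|_e=0$ along the edge direction accounts for the remaining (tangential and mixed) second derivatives, so in fact \emph{all} derivatives of $u$ of order $\le 2$ vanish on every edge of $T$.

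\emph{Face and interior steps.} Fix a face $f$ with in-face barycentric coordinates $\lambda_a,\lambda_b,\lambda_c$. By the edge step, $u|_f$ and its first and second in-face derivatives vanish on the three edges of $f$, so $u|_f$ is divisible by $(\lambda_a\lambda_b\lambda_c)^3$, and testing the moments (5) against the cofactor gives $u|_f=0$; similarly $\partial_{n_f}u|_f$ and its first in-face derivatives vanish on those edges, so $\partial_{n_f}u|_f$ is divisible by $(\lambda_a\lambda_b\lambda_c)^2$, and (6) forces $\partial_{n_f}u|_f=0$. Thus $u$ and $\nabla u$ vanish on all of $\partial T$. Finally, if $\mu_1,\dots,\mu_4$ are the barycentric coordinates of $T$, then $u|_{\{\mu_i=0\}}=0$ and $\nabla u|_{\{\mu_i=0\}}=0$ force $\mu_i^2\mid u$ for each $i$, so $u=(\mu_1\mu_2\mu_3\mu_4)^2 w$ with $w\in P_{k-8}(T)$; testing the interior moments (7) against $q=w$ and using $(\mu_1\mu_2\mu_3\mu_4)^2>0$ on $\mathrm{int}(T)$ gives $w=0$, hence $u=0$.

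The only real work — fussy rather than deep — is the bookkeeping that at each stage the order of flatness inherited from the previous stage matches exactly the polynomial degree in the corresponding moment condition (no degrees of freedom wasted), including handling all mixed tangential--normal derivatives along edges by differentiating the lower-order identities already obtained. This is precisely the point where the classical argument of \cite{Zhang2009} carries over, and I would refer to it for these details rather than reproduce them.
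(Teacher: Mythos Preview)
Your proposal is correct and follows essentially the same route as the paper's proof: both verify the dimension count, then argue edge--face--interior by peeling off powers of barycentric coordinates at each stage using the successive moment conditions. You spell out a few details the paper leaves implicit (the positivity argument for the moment tests and the bookkeeping of mixed tangential--normal derivatives on edges), but the structure and key ideas are identical.
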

\begin{proof}
	The number of degrees of freedom is
	\begin{align*}
		&35\times 4 + (6k-46)\times 6 + (\tfrac{(k-8)(k-7)}{2} +\tfrac{(k-6)(k-5)}{2})\times 4 + \tfrac{(k-7)(k-6)(k-5)}{6}\\
		= & \tfrac{(k+1)(k+2)(k+3)}{6} = \operatorname{dim}P_k(T)
	\end{align*}
	It suffices to prove that $u =0$ if it vanishes at all the degrees of freedom. From the first set of degrees of freedom, the derivatives of $u$ of order $\leq 4$ vanish at all the vertices, which indicates that $u|_e = (\lambda_j\lambda_k)^5p$ for some $p\in P_{k-10}(e)$ on an edge $e = \mathrm{conv}\{x_j,x_k\}$. Then the second set of degrees of freedom implies $u|_e = 0$. Similarly, the third and fourth set of degrees of freedom provide $\tfrac{\partial u}{\partial n_e^i}|_e = \tfrac{\partial^2u}{\partial n_e^i\partial n_e^j}=0$ for $i,j = 1,2$.
	This applies to all edges and,
	therefore, $u$, $\nabla u$, and $\nabla^2 u$, considered as a function on a two dimensional face $f = \mathrm{conv}\{x_j,x_k,x_\ell\}$, vanishes at the boundary of $f$. Hence,
	$u|_f = (\lambda_j\lambda_k\lambda_\ell)^3p$ for some $p\in P_{k-9}(f)$.
	The fifth set degrees of freedom provides $u|_f = 0$ and from the sixth set of degrees of freedom, we infer $\tfrac{\partial u}{\partial n_f}|_f=0$. Since $u$ and $\nabla u$ vanish on $\partial T$, $u = (\lambda_1\lambda_2\lambda_3\lambda_4)^2v$ for some $v\in P_{k-8}(T)$. The seventh set of degrees of freedom concludes $u = 0$.
\end{proof}
\printbibliography
\end{document}